\newtheorem{theorem}{Theorem}[section]
\newtheorem{lemma}{Lemma}[section]
\newtheorem{proposition}[lemma]{Proposition}
\newtheorem{corollary}[lemma]{Corollary}
\newtheorem{definition}[lemma]{Definition}
\newtheorem{remark}[lemma]{Remark}
\newtheorem{problem}{{\bf{Problem}}}[section]
\numberwithin{figure}{section}
\numberwithin{equation}{section}
\begin{document}

\title[Transonic shocks]{Transonic shocks for three-dimensional axisymmetric flows in divergent nozzles}

\author{Hyangdong Park}
\address{School of Mathematics, Korea Institute for Advanced Study (KIAS), 85 Hoegiro, Dongdaemun-gu, Seoul, 02455, Republic of Korea}
\email{hyangdong@kias.re.kr}

\keywords{angular momentum density, axisymmetric, corner singularity, Euler system, free boundary problem,  transonic shock, vorticity, weighted H\"older norm} 

\subjclass[2020]{
 35G60, 35J66, 35M32, 35Q31, 76N10}

\date{\today}

\begin{abstract}
We prove the stability of three-dimensional axisymmetric solutions to the steady Euler system with transonic shocks in divergent nozzles under perturbations of the exit pressure and the supersonic solution in the upstream region. We first derive a free boundary problem with the newly introduced formulation of the Euler system for three-dimensional axisymmetric flows in divergent nozzles via the method of Helmholtz decomposition. We then construct an iteration scheme and use the Schauder fixed point theorem and weak implicit function theorem to solve the problem. 
\end{abstract}

\maketitle

\newcommand {\gam}{\gamma}
\newcommand \Gam{\Gamma}
\newcommand \trho{\tilde{\rho}}
\newcommand \vphi{\varphi}
\newcommand \ol{\overline}
\newcommand \Om{\Omega}
\newcommand \om{\omega}
\newcommand \der{\partial}
\newcommand \tx{\text}
\newcommand \mcl{\mathcal}
\newcommand \eps{\varepsilon}
\newcommand \mfrak{\mathfrak}
\newcommand \Gamw{\Gamma_{{\rm w}}}
\newcommand \rx{{\rm x}}



\section{Introduction}
In $\mathbb{R}^3$, the steady  inviscid compressible flows of ideal polytropic gas  are governed by the steady Euler system as follows: 
\begin{equation}\label{Euler-system-B}
\left\{\begin{split}
&\mbox{div}(\rho{\bf u})=0,\\
&\mbox{div}(\rho{\bf u}\otimes{\bf u})+\nabla p={\bf 0},\\
&\mbox{div}\left(\rho{\bf u}B\right)=0,
\end{split}\right.
\end{equation}
where $\rho=\rho({\bf x})$, ${\bf u}=(u_1,u_2,u_3)({\bf x})$, $p=p({\bf x})$, and $B=\left(\frac{\lvert{\bf u}\rvert^2}{2}+\frac{\gamma p}{(\gamma-1)\rho}\right)({\bf x})$  denote the density, velocity, pressure, and  \textit{the Bernoulli invariant} of the flow for \textit{the adiabatic exponent} $\gamma>1$, respectively, at ${\bf x}=(x_1,x_2,x_3)\in\mathbb{R}^3$. 
In this paper, we study the stability of transonic shocks for the steady Euler system \eqref{Euler-system-B}.

Let $\Omega\subset\mathbb{R}^3$ be an open connected set, and let $\Gamma$ be a non-self-intersecting $C^1$ surface dividing $\Omega$ into two disjoint open subsets $\Omega^{\pm}$ such that $\Omega=\Omega^-\cup\Gamma\cup\Omega^+$.
\begin{definition}
Define ${\bf U}=(\rho,{\bf u},p)\in[L^{\infty}_{\rm{loc}}(\Om)\cap C^1_{\rm{loc}}(\Om^{\pm})\cap C^0_{\rm{loc}}(\Om^{\pm}\cup\Gamma)]^5$ to be a weak solution to \eqref{Euler-system-B} in $\Omega$ if the following properties are satisfied:
For any test function $\xi\in C_0^{\infty}(\Om)$ and $j=1,2,3$,
\begin{equation}\label{we-eq}
\int_{\Om}\rho {\bf u}\cdot\nabla\xi d{\bf x}=\int_{\Om}(\rho u_j{\bf u}+p{\bf e}_j)\cdot\nabla\xi d{\bf x}=\int_{\Om}\rho {\bf u} B\cdot\nabla\xi d{\bf x}=0,
\end{equation}
where each ${\bf e}_j$ is the unit vector in the $x_j$-direction.
\end{definition}
By the integration by parts, ${\bf U}$ satisfies \eqref{we-eq} if and only if 
\begingroup
\allowdisplaybreaks
\begin{align}
(w_1)\,\,&\label{w1-con}{\bf U} \mbox{ is a classical solution to \eqref{Euler-system-B} in }\Om^{\pm};\\
(w_2)\,\,&\nonumber{\bf U}\mbox{ satisfies the Rankine-Hugoniot conditions}\\
&\label{w2-con}(RH_1)\quad [\rho{\bf u}\cdot{\bf n}]_{\Gamma}=0,\quad [\rho({\bf u}\cdot{\bf n})^2+p]_{\Gamma}=0,\quad [\rho{\bf u}\cdot{\bf n}B]_{\Gamma}=0,\\
&\nonumber(RH_2)\quad \rho({\bf u}\cdot{\bf n})[{\bf u}\cdot{\bm\tau}_k]_{\Gamma}=0\quad\mbox{for all }k=1,2,
\end{align}
\endgroup
where $[\,\cdot\,]_{\Gamma}$ is defined by $[F({\bf x})]_{\Gamma}:=\left.F({\bf x})\right|_{\overline{\Om^-}}-\left.F({\bf x})\right|_{\overline{\Om^+}}$ for ${\bf x}\in\Gamma$, ${\bf n}$ is a unit normal vector field on $\Gamma$, and ${\bm \tau}_k$ $(k=1,2)$ are unit tangent vector fields on $\Gamma$ such that they are linearly independent at each point on $\Gamma$.

Suppose that $\rho>0$ in ${\Omega}$.
Then the condition $(RH_2)$ is satisfied if either ${\bf u}\cdot{\bf n}=0$ on $\Gamma$, or $[{\bf u}\cdot{\bm \tau}_k]_{\Gamma}=0$ for all $k=1,2$. 
$\Gamma$ is called a contact discontinuity if ${\bf u}\cdot{\bf n}=0$ on $\Gamma$.
If $[{\bf u}\cdot{\bm \tau}_k]_{\Gamma}=0$ for all $k=1,2$, $\Gamma$ is called a shock.
In this paper, we focus on shocks. For the study  of contact discontinuities, one may refer to \cite{bae2019contact, bae2019contact3D,Huang:2021aa} and the references cited therein.

We define a weak solution to \eqref{Euler-system-B} with a transonic shock as follows:

\begin{definition}{\cite[Definition 1.2]{park2020transonic}}\label{Def-shock}
Define ${\bf U}=(\rho,{\bf u},p)\in[L^{\infty}_{\rm{loc}}(\Om)\cap C^1_{\rm{loc}}(\Om^{\pm})\cap C^0_{\rm{loc}}(\Om^{\pm}\cup\Gamma)]^5$ to be a weak solution to \eqref{Euler-system-B} in $\Omega$ with a transonic shock $\Gamma$ if the following properties are satisfied:
\begin{itemize}
\item[(a)] {\rm (Shock)} $\Gamma$ is a non-self-intersecting $C^1$ surface dividing $\Omega$ into two open subsets $\Om^{\pm}$ such that $\Om=\Om^-\cup\Gamma\cup\Omega^+$; 
\item[(b)] {\rm (Classical solution)} ${\bf U}$ satisfies $(w_1)$ in \eqref{w1-con};

\item[(c)] {\rm (Positivity of density)} $\rho>0$ in $\overline{\Om}$;

\item[(d)] {\rm (Rankine-Hugoniot conditions)} ${\bf U}$ satisfies $(RH_1)$ in \eqref{w2-con} and $[{\bf u}\cdot{\bm\tau}_k]_{\Gamma}=0$ for all $k=1,2$;
\item[(e)] {\rm(Transonic)} The solution is supersonic in $\Om^-$ and subsonic in $\Om^+$, i.e., $|{\bf u}|>c(\rho,p)$ in $\Om^-$ and $|{\bf u}|<c(\rho,p)$  in $\Om^+$ for the sound speed $c(\rho,p):=\sqrt{\frac{\gamma p}{\rho}}$;
\item[(f)] {\rm(Admissibility)} ${\bf u}|_{\overline{\Om^-}\cap\Gamma}\cdot{\bf n}>{\bf u}|_{\overline{\Om^+}\cap\Gamma}\cdot{\bf n}>0$ for the unit normal vector field ${\bf n}$ on $\Gamma$ pointing toward $\Om^+$.
\end{itemize}
\end{definition}
The goal of this work is to establish the existence and stability of weak solutions to the steady Euler system \eqref{Euler-system-B} with  transonic shocks in a three-dimensional divergent nozzle (Figure \ref{figure1}).
\begin{figure}[!h]
\centering
\includegraphics[scale=0.55]{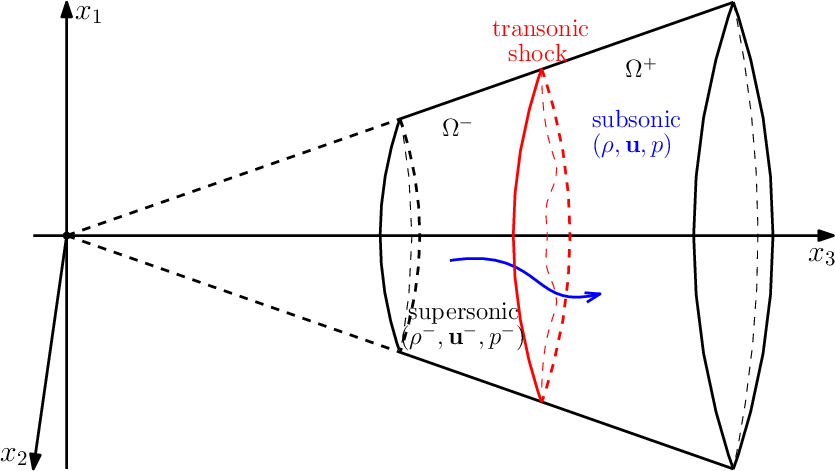}
\caption{Transonic shock in a three-dimensional divergent nozzle}\label{figure1}
\end{figure}
The transonic shock problems in divergent nozzles are motivated by the transonic shock problem in  \textit{a de Laval nozzle} (a convergent-divergent nozzle).
According to \cite[Chapter 146]{courant1999supersonic}, the flow pattern of the quasi-linear approximation in the diverging part of the de Laval nozzle varies depending only on the shape of the nozzle and the exit pressure.
The phenomenon has led many researchers to study the existence and stability of transonic shocks in a variety of related situations.
One of them is about the difference between flat nozzles and divergent nozzles.   The stability problem for transonic shocks in divergent nozzles when prescribing a suitable exit pressure is well-posed, while the problem in flat nozzles is ill-posed. For more information on this, one may refer to \cite{xin2009transonic, Yuan2008remark} and the references cited therein.
In this work, we establish the stability of three-dimensional weak solutions to \eqref{Euler-system-B} with  transonic shocks in divergent nozzles under small perturbations of the given upstream supersonic flows and the exit pressure.
For other studies on transonic shock problems, one may refer to \cite{bae2011transonic,  chen2003multidimensional, chen2007existence,  chen2018mathematics, Chen2022, FANG202162, Fang2021, park20213, weng2021structural} and the references cited therein.
In particular, one can refer to \cite{Chen2022, FANG202162, Fang2021} and the references cited therein for recent developments in the analysis of multidimensional transonic shocks.

Inspired by Bae and Feldman's work \cite{bae2011transonic} on transonic shocks for multi-dimensional non-isentropic potential flows, we study the existence and stability of transonic shocks for three-dimensional axisymmetric flows with nonzero vorticity and nonzero angular momentum density.
Recently, in \cite{park20213, weng2021structural},  the stability problem  solved by using the method of stream function formulation. 
In particular, in \cite{park20213}, the author studied the flows having $C^{1,\alpha}$ interior and $C^{\alpha}$ up to boundary regularity in some weighted H\"older normed space due to the corner singularity issues.
We use the same weighted H\"older norm to deal with the singularity issues, but not only get an improvement for the exponent $\alpha$ of the H\"older condition, but also use a new method other than a stream function formulation called the Helmholtz decomposition method.
That is the main new feature of this paper.

According to the fundamental theorem of vector calculus, if the velocity field ${\bf u}$ is continuously differentiable in a bounded domain, then it can be decomposed by the Helmholtz decomposition into a sum of an irrotational vector field and a solenoidal vector field in the form of
\begin{equation*}
{\bf u}=\nabla\varphi+\nabla\times{\bf V}.
\end{equation*} 
The recently published papers (cf. \cite{Bae:2021aa, bae2014subsonic,  bae2019contact, bae2019contact3D, bae20183, park2020transonic}) show that this decomposition is very useful to study the steady Euler/Euler-Poisson system.
The Helmholtz decomposition method is significant in that it allows us to investigate flows with a nonzero vorticity and build theories on them based on the study of irrotational flows.
We use this to study the stability of transonic shocks for the flows with nonzero vorticity and nonzero angular momentum density in divergent nozzles under perturbations of the exit pressure and the supersonic solution in the upstream region.
We first derive a free boundary problem with the newly introduced formulation of the Euler system for three-dimensional axisymmetric flows in divergent nozzles via the method of Helmholtz decomposition. 
We then construct an iteration scheme and use the Schauder fixed point theorem and weak implicit function theorem to solve the problem. 
To address technical problems that do not appear  in studies of potential flows, including the solvability of an elliptic equation with singular coefficients, we employ the method of \cite{bae20183,park2020transonic} for flows with nonzero vorticity and nonzero angular momentum density in three-dimensional cylinders.
To the best of our knowledge, this is the first study on transonic shock flows with nonzero vorticity and nonzero angular momentum density in divergent nozzles via the Helmholtz decomposition method.  

This paper is organized as follows.
In Section \ref{Sec-Main}, the problem and theorem are stated as Problem \ref{re-div-Prob} and Theorem \ref{Main-Theorem}, respectively.
In Section \ref{Sec-HD}, the problem is reformulated via the  Helmholtz decomposition method, and its solvability is stated as Theorem \ref{Helmholtz-Theorem}.
Then, we give an overview of the proof of Theorem \ref{Helmholtz-Theorem}. 
In the last part of Section \ref{Sec-HD}, we give an overview of the proof of Proposition \ref{free-v-Pro}, which is an essential step in the proof of Theorem \ref{Helmholtz-Theorem}. 
In Sections \ref{Sec-Pro}, we prove two lemmas required to prove Proposition \ref{free-v-Pro}.
In Section \ref{Sec-Mthm}, we prove Proposition \ref{free-v-Pro} and Theorem \ref{Helmholtz-Theorem} completely.

\section{Problem and Theorem}\label{Sec-Main}


\subsection{Background solution}\label{S-back}
We use a spherical coordinate system to study axisymmetric solutions.
Let $(r,\phi,\theta)$ be the spherical coordinates of ${\bf x}=(x_1,x_2,x_3)\in\mathbb{R}^3$, that is, 
\begin{equation*}
(x_1,x_2,x_3)=(r\sin\phi\cos\theta,r\sin\phi\sin\theta,r\cos\phi),\quad r\ge0,\,\,\phi\in[0,\pi],\,\,\theta\in\mathbb{T},
\end{equation*}
where $\mathbb{T}$ is a one dimensional torus with period $2\pi$. 
With this coordinate system, let us define a divergent nozzle $\Om$ by
\begin{equation*}
\Om:=\{{\bf x}\in\mathbb{R}^3:\, r_{\rm en}<r<r_{\rm ex},\,0\le\phi<\phi_0\}
\end{equation*}
for positive constants $r_{\rm en}$, $r_{\rm ex}\in\mathbb{R}^+$, and $\phi_0\in(0,\frac{\pi}{2})$.
We denote the entrance, wall, and the exit of $\Om$ as follows:
\begin{equation*}
\begin{split}
&\Gamma_{\rm en}:=\partial\Om\cap\{r=r_{\rm en},\,0\le\phi<\phi_0\},\quad\Gamma_{\rm w}:=\partial\Om\cap\{\phi=\phi_0\},\\
&\Gamma_{\rm ex}:=\partial\Om\cap\{r=r_{\rm ex},\,0\le\phi<\phi_0\}.
\end{split}
\end{equation*}
In this paper, we assume that $r_{\rm sh}>1$ and $r_{\rm ex}-r_{\rm sh}>1$ for simplification.

The existence of a potential radial solution to the Euler system with a transonic shock in $\Om$ is well known (cf. \cite[Section 2.4]{bae2011transonic}).
More precisely, we have a radial solution $(\rho,{\bf u},p)$ with a transonic shock $\Gamma_{\rm sh}$ satisfying the following properties:
\begin{itemize}
\item[(i)] (Shock)  For some positive constant $r_{\rm sh}$ with $r_{\rm en}\le r_{\rm sh}\le r_{\rm ex}$, $$\Gamma_{\rm sh}=\overline{\Om}\cap\{r=r_{\rm sh}\};$$
\item[(ii)] (Radial solutions) For $\Om^{\pm}:=\Om\cap\{\pm(r_{\rm sh}-r)<0\}$, $(\rho,{\bf u}, p)\in \left[C^0(\overline{\Om^{\pm}})\cap C^1(\Om^{\pm})\right]^5$ satisfies the Euler system \eqref{Euler-system-B} pointwise in $\Om^{\pm}$. For some radial functions $\rho^{\pm}=\rho^{\pm}(r;r_{\rm sh})$, $\varphi^\pm=\varphi^\pm(r;r_{\rm sh})$, and $p^{\pm}=p^{\pm}(r;r_{\rm sh})$ defined in $\Om$,
\begin{equation*}
\rho=\rho^{\pm},\quad{\bf u}=\nabla\varphi^{\pm},\quad p=p^{\pm}\quad\mbox{in}\quad\Om^{\pm}
\end{equation*}
and $(\rho^+,\nabla\varphi^+,p^+)(r_{\rm sh};r_{\rm sh})\ne(\rho^-,\nabla\varphi^-,p^-)(r_{\rm sh};r_{\rm sh})$, but $\varphi_0^+(r_{\rm sh};r_{\rm sh})=\varphi_0^-(r_{\rm sh};r_{\rm sh})$.
Furthermore, $\rho$, ${\bf u}$ and $p$ are positive functions and there exist positive constants $\mu_0^{\pm}(r_{\rm sh})$ such that $\rho^{\pm}(r;r_{\rm sh})\ge \mu_0^{\pm}(r_{\rm sh})>0$;
\item[(iii)] (Transonic) $\lvert\nabla\varphi^-\rvert>\sqrt{\frac{\gamma p^-}{\rho^-}}$ (supersonic) and $\lvert \nabla\varphi^+\rvert<\sqrt{\frac{\gamma p^+}{\rho^+}}$ (subsonic) hold;
\item[(iv)] (Entrance data) Given positive constants $(\rho_{\rm en},u_{\rm en},p_{\rm en})\in\mathbb{R}^3$ with $u_{\rm en}>\sqrt{\frac{\gamma p_{\rm en}}{\rho_{\rm en}}}$, it holds that 
\begin{equation*}
(\rho^-,\partial_r\varphi^-,p^-)=(\rho_{\rm en},u_{\rm en},p_{\rm en})\quad\mbox{on}\quad\Gamma_{\rm en};
\end{equation*}
\item[(v)] (Slip boundary condition) The solution satisfy the slip boundary condition on the wall, that is, $\nabla\varphi^{\pm}\cdot{\bf n}=0$ on $\Gamma_{\rm w}$ for the outward unit normal vector field ${\bf n}$;
\item[(vi)] (Rankine-Hugoniot conditions) On the transonic shock $\Gamma_{\rm sh}$,
\begin{equation}\label{K0-def}
\partial_r\varphi^+=\frac{K_0}{\partial_r\varphi^-}\mbox{ and }p^+=\rho^-\lvert\partial_r\varphi^-\rvert^2+p^--\rho^-K_0\,\,\mbox{for}\,\,K_0:=\frac{2(\gamma-1)B_0}{\gamma+1};
\end{equation}
\item[(vii)] (Bernoulli invariant) $B(\rho,{\bf u},p)\equiv B_0$ for a fixed constant $B_0>0$, that is,
\begin{equation}\label{Back-B}
\frac{1}{2}\lvert\partial_r\varphi^{\pm}\rvert^2+\frac{\gamma p^{\pm}}{(\gamma-1)\rho^{\pm}}=B_0\quad\mbox{in}\quad\overline{\Om};
\end{equation}
\item[(viii)] (Monotonicity) For a fixed $r_{\rm sh}$, the functions $\rho^{\pm}, p^{\pm},$ and $\partial_r\varphi^{\pm}$ have monotonic properties;
	\begin{itemize}
	\item[$\cdot$] $\rho^-$ and $p^-$ monotonically decrease while $\partial_r\varphi^-$ monotonically increases by $r$;
	\item[$\cdot$] $\rho^+$ and $p^+$ monotonically increase while $\partial_r\varphi^+$ monotonically decreases by $r$.
	\end{itemize}
\item[(ix)] (Admissibility) $\nabla\varphi^-\cdot{\bf n}>\nabla\varphi^+\cdot{\bf n}>0$ for the unit normal vector field ${\bf n}$ on $\Gamma_{\rm sh}$ pointing toward $\Om^+$.
\end{itemize}

\begin{definition} For a fixed $r_{\rm sh}$, we denote the solution satisfying {\rm (i)-(ix)} as 
\begin{equation*}
(\rho_0,\nabla\varphi_0,p_0)(r;r_{\rm sh}):=\left\{\begin{split}(\rho_0^-,\nabla\varphi_0^-,p_0^-)(r;r_{\rm sh})\mbox{ if }r<r_{\rm sh}\\
										(\rho_0^+,\nabla\varphi_0^+,p_0^+)(r;r_{\rm sh})\mbox{ if }r>r_{\rm sh}\end{split}\right.,
\end{equation*}
and call it the background solution with the transonic shock on $\Gamma_{\rm sh}={\Om}\cap\{r=r_{\rm sh}\}$.
\end{definition}

In \cite{Yuan2008remark}, it was proved that the monotonicity of the exit values of $(\rho_0,\partial_r\varphi_0,p_0)$ depends on the location of shocks $r_{\rm sh}$.
And the following proposition was verified.

\begin{proposition}\cite[Theorem 2.1]{Yuan2008remark}
Set $p_{\min}:=p^+_0(r_{\rm ex};r_{\rm ex})$ and $p_{\max}:=p_0^+(r_{\rm ex};r_{\rm en})$. Then $p_{\min}<p_{\max}$ holds. 
For any $p_c\in(p_{\min},p_{\max})$, there exists a unique $r_c\in(r_{\rm en},r_{\rm ex})$ such that the background solution $(\rho_0,\nabla\varphi_0,p_0)(r;r_c)$ satisfies $p_0^+(r_{\rm ex},r_c)=p_c$.  
\end{proposition}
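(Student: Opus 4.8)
The plan is to show that the downstream exit pressure $p_0^+(r_{\rm ex};r_{\rm sh})$ depends continuously and strictly monotonically on the shock location $r_{\rm sh}$, after which both assertions follow from the intermediate value theorem.

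First I would reduce the background solution to a few conserved parameters. For a radial potential flow (so ${\bf u}$ has only a radial component, equal to $\partial_r\varphi$), the continuity equation in spherical coordinates gives that the mass flux $m:=r^2\rho\,\partial_r\varphi$ is constant, the entropy $A:=p\rho^{-\gamma}$ is constant on each side of the shock, and the Bernoulli function equals $B_0$ by \eqref{Back-B}. The upstream solution is fixed once and for all by the entrance data $(\rho_{\rm en},u_{\rm en},p_{\rm en})$ and is independent of $r_{\rm sh}$; moreover the first and third Rankine-Hugoniot relations in \eqref{w2-con} together force $[m]_{\Gamma_{\rm sh}}=0$ and $[B]_{\Gamma_{\rm sh}}=0$, so the downstream flow carries the same $m$ and the same $B_0$. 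Hence the only datum that changes with $r_{\rm sh}$ is the downstream entropy constant $A^+=A^+(r_{\rm sh})$, which is determined from the (known) upstream state at $r=r_{\rm sh}$ through \eqref{K0-def}.

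Continuity is then routine: the upstream radial solution is smooth in $r$, so $(\rho_0^-,\partial_r\varphi_0^-,p_0^-)(r_{\rm sh};r_{\rm sh})$, and therefore $A^+(r_{\rm sh})$ via \eqref{K0-def}, is a smooth function of $r_{\rm sh}$. Given $A^+$ and the fixed $m,B_0$, the downstream state at a radius $r\in[r_{\rm sh},r_{\rm ex}]$ is the solution on the subsonic branch of
\[
\tfrac12\,u^2+\frac{\gamma}{\gamma-1}\,A^+\rho^{\gamma-1}=B_0,\qquad r^2\rho u=m,
\]
equivalently of the scalar ODE $\partial_r u=-\dfrac{2uc^2}{r(c^2-u^2)}$ with $c^2=\gamma A^+\rho^{\gamma-1}$. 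Since $u$ decreases and $c$ increases along the divergent part (the monotonicity in (viii)), the flow stays strictly subsonic up to $r_{\rm ex}$, and the implicit function theorem gives that $p_0^+(r_{\rm ex};r_{\rm sh})$ depends $C^1$-smoothly on $r_{\rm sh}$.

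The crux is strict monotonicity. Writing $p_0^+(r_{\rm ex};r_{\rm sh})=\Pi\big(A^+(r_{\rm sh})\big)$, where $\Pi(A)$ is the exit pressure obtained by solving the subsonic system above at $r=r_{\rm ex}$, I would show that each factor is strictly monotone. For $\Pi$: differentiating its two defining relations in $A$ and solving the resulting linear system for $\partial_A\rho$ and $\partial_A u$ yields $\Pi'(A)\neq 0$ of a definite sign. For $A^+$: by the monotonicity in (viii), the upstream speed $|\partial_r\varphi_0^-|$ increases and the upstream sound speed $\sqrt{\gamma p_0^-/\rho_0^-}$ decreases in $r_{\rm sh}$, so the upstream Mach number at the shock is strictly increasing; since the entropy ratio $A^+/A^-$ across a normal shock is a strictly increasing function of the upstream Mach number on $(1,\infty)$ and $A^-$ is fixed, $A^+(r_{\rm sh})$ is strictly increasing. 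Composing, $r_{\rm sh}\mapsto p_0^+(r_{\rm ex};r_{\rm sh})$ is strictly monotone on $[r_{\rm en},r_{\rm ex}]$; tracking the signs shows that it is strictly decreasing, which in particular gives $p_{\min}=p_0^+(r_{\rm ex};r_{\rm ex})<p_0^+(r_{\rm ex};r_{\rm en})=p_{\max}$. A continuous strictly decreasing function on $[r_{\rm en},r_{\rm ex}]$ with endpoint values $p_{\max}$ and $p_{\min}$ restricts to a homeomorphism of $(r_{\rm en},r_{\rm ex})$ onto $(p_{\min},p_{\max})$, and its inverse assigns to each $p_c\in(p_{\min},p_{\max})$ the unique $r_c$. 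The main obstacle is exactly this monotonicity step: both the entropy jump at the shock and the subsonic evolution from $r_{\rm sh}$ to $r_{\rm ex}$ contribute to $dp_0^+(r_{\rm ex};r_{\rm sh})/dr_{\rm sh}$, and one must verify that the two effects reinforce rather than cancel --- which I would settle either by the explicit algebraic computation above (radial isentropic flow being exactly solvable through Bernoulli) or, more robustly, by a comparison argument for the scalar ODE, monotone in the parameter $A^+$.
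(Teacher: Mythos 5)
The paper does not prove this proposition itself; it is quoted verbatim from Yuan's paper \cite{Yuan2008remark} (Theorem~2.1 there) and used as a black box, so there is no internal argument to compare against. Your sketch is nonetheless sound and self-contained, and it is essentially the standard route: reduce to the three invariants (mass flux $m=r^2\rho u$, Bernoulli $B_0$, downstream entropy $A^+$), observe that only $A^+$ varies with $r_{\rm sh}$, then compose two monotonicities. The one thing you leave implicit, the sign of $\Pi'(A)$, is a one-line computation: differentiating $\tfrac12 u^2+\tfrac{\gamma}{\gamma-1}A\rho^{\gamma-1}=B_0$ and $r_{\rm ex}^2\rho u=m$ in $A$ gives
\begin{equation*}
\frac{dp}{dA}=\rho^{\gamma}\left(1-\frac{\gamma}{\gamma-1}\,\frac{c^2}{c^2-u^2}\right)<0
\end{equation*}
on the subsonic branch, so $\Pi$ is strictly decreasing; combined with $A^+(r_{\rm sh})$ strictly increasing (upstream Mach at the shock increases, since $u^-$ increases and $c^-=\sqrt{(\gamma-1)(B_0-\tfrac12|u^-|^2)}$ decreases along the divergent nozzle), you get $r_{\rm sh}\mapsto p_0^+(r_{\rm ex};r_{\rm sh})$ strictly decreasing, and the intermediate value theorem finishes the claim.
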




\subsection{Main results}
The goal of this work is to prove the unique existence of an axisymmetric solution to the full Euler system with a transonic shock in the sense of Definition \ref{Def-shock} when we perturb  sufficiently small the background supersonic solution in the upstream region and the background pressure on the exit.

\begin{definition} 
Let us define an orthonormal basis $\{{\bf e}_r,{\bf e}_{\phi},{\bf e}_\theta\}$ for the spherical coordinates as follows:
\begin{equation*}
\begin{split}
&{\bf e}_r:=(\sin\phi\cos\theta,\sin\phi\sin\theta,\cos\phi),\, {\bf e}_\phi:=(\cos\phi\cos\theta,\cos\phi\sin\theta,-\sin\phi),\\
&{\bf e}_{\theta}:=(-\sin\theta,\cos\theta, 0). 
\end{split}
\end{equation*}
Let $D$ be an open subset of $\mathbb{R}^3$ and let ${\bf x}=(x_1,x_2,x_3)\in D$.
\begin{itemize}
\item[(i)] A function $f:D\to\mathbb{R}$ is {\emph{axisymmetric}} if $f({\bf x})=\tilde{f}(r,\phi)$ for some $\tilde{f}:\mathbb{R}^2\to\mathbb{R}$ so its value is independent of $\theta$.
\item[(ii)] A vector-valued function ${\bf F}:D\to\mathbb{R}^3$ is {\emph{axisymmetric}} if ${\bf F}({\bf x})=F_r(r,\phi){\bf e}_r+F_\phi(r,\phi){\bf e}_\phi+F_{\theta}(r,\phi){\bf e}_\theta$ for some axisymmetric functions $F_r$, $F_\phi$, and $F_\theta$.
\end{itemize}
\end{definition}

To describe our problem and theorem, we introduce the weighted H\"older norm defined as follows:
Let $\Omega\subset\mathbb{R}^n$ be an open bounded connected set, and let $\Gamma$ be a closed portion of $\partial\Omega$. 
For ${\bf x}, {\bf y}\in\Omega$, set $\delta_{\bf x}$ and $\delta_{{\bf x,y}}$ as 
\begin{equation*}
\delta_{\bf x}:=\inf_{{\bf z}\in\Gamma}\lvert{\bf x}-{\bf z}\rvert\quad\mbox{and}\quad\delta_{{\bf x,y}}:=\min(\delta_{\bf x},\delta_{\bf y}).
\end{equation*}
For $\alpha\in(0,1)$, $k\in\mathbb{R}$ and $m\in\mathbb{Z}^+$, define  the weighted H\"older norms by 
\begin{eqnarray*}
\begin{split}
&\|u\|_{m,0,\Omega}^{(k,\Gamma)}:=\sum_{0\le\lvert \beta\rvert\le m}\sup_{{\bf x}\in\Omega}\delta_{\bf x}^{\max(\lvert \beta\rvert+k,0)}\lvert D^{\beta}u({\bf x})\rvert,\\
&[u]_{m,\alpha,\Omega}^{(k,\Gamma)}:=\sup_{\lvert \beta\rvert=m}\sup_{\substack{{\bf x,y}\in\Omega,\\{\bf x}\ne{\bf y}}}\delta_{\bf x,y}^{\max(m+\alpha+k,0)}\frac{\lvert D^{\beta}u({\bf x})-D^{\beta}u({\bf y})\rvert}{\lvert {\bf x}-{\bf y}\rvert^{\alpha}},\\
&\|u\|_{m,\alpha,\Omega}^{(k,\Gamma)}:=\|u\|_{m,0,\Omega}^{(k,\Gamma)}+[u]_{m,\alpha,\Omega}^{(k,\Gamma)},
\end{split}
\end{eqnarray*}
where we set $D^{\beta}:=\partial_{x_1}^{\beta_1}\dots\partial_{x_n}^{\beta_n}$ for a multi-index $\beta=(\beta_1,\ldots,\beta_n)$ with $\beta_j\in\mathbb{Z}^+$ and $\lvert \beta\rvert=\sum_{j=1}^n\beta_j$. 
We denote by $C^{m,\alpha}_{(k,\Gamma)}(\Omega)$ the completion of the set of all smooth functions whose $\|\cdot\|_{m,\alpha,\Omega}^{(k,\Gamma)}$ norms are finite.

Our concern is to solve the following problem. 
\begin{problem}\label{S-div-Prob} 
Let $(\rho^-,{\bf u}^-,p^-)\in [C^{2,\alpha}(\overline{\Om})]^5$ be an axisymmetric supersonic solution to the Euler system \eqref{Euler-system-B} in $\Om$ with $B=B_0$, and suppose that ${\bf u}^-\cdot{\bf e}_{\phi}=0$ holds on $\Gamma_{\rm w}$.
For any fixed $\alpha\in(\frac{1}{2},1)$ and a given radial function $p_{\rm ex}$ defined on $\Gamma_{\rm ex}$, 
assume that
{\small \begin{equation}\label{3D-rem11}
\begin{split}
\sigma(\rho^-,{\bf u}^-,p^-,p_{\rm ex})&:=\|(\rho^-,{\bf u}^-,p^-)-(\rho_0^-,\nabla\varphi_0^-,p_0^-)\|_{2,\alpha,\Om}+\|p_{\rm ex}-p_0^+\|^{(-\alpha,\partial\Gamma_{\rm ex})}_{1,\alpha,\Gamma_{\rm ex}}\\
&\le \sigma_0
\end{split}
\end{equation}}
with sufficiently small $\sigma_0>0$ to be specified later.

Find a weak solution $U=(\rho,{\bf u}, p)$ to the Euler system \eqref{Euler-system-B} with a transonic shock
$$\mathfrak{S}_f:={\Om}\cap\{r=f(\phi)\}$$
in the sense of Definition \ref{Def-shock} in $\Om$  such that the following properties hold:
\begin{itemize}
\item[(i)] 
$\rho>0$ holds in $\overline{\Om}$;
\item[(ii)] In the upstream region $\Om_f^-:=\Om\cap\{r<f(\phi)\}$,
$U=(\rho^-,{\bf u}^-,p^-)$ holds;
\item[(iii)] In the downstream region $\Om_f^+:=\Om\cap\{r>f(\phi)\}$,
 ${\lvert{\bf u}\rvert}<c(\rho,p)$ for the sound speed $c(\rho,p):=\sqrt{\frac{\gamma p}{\rho}};$
 \item[(iv)] On the exit $\Gamma_{\rm ex}$, 
 $p=p_{\rm ex}$ holds;
 \item[(v)] On the wall $\Gamma_{\rm w}$,
$U$ satisfies the slip boundary condition, that is,
${\bf u}\cdot{\bf e}_\phi=0;$
\item[(vi)] On the transonic shock $\mathfrak{S}_f$,
$U$ satisfies the Rankine-Hugoniot conditions
\begin{equation*}\label{3D-BC-Cont1}
[{\bf u}\cdot{\bm\tau}_f]_{\mathfrak{S}_f}=[\rho {\bf u}\cdot{\bf n}_f]_{\mathfrak{S}_f}=[\rho({\bf u}\cdot{\bf n}_f)^2+p]_{\mathfrak{S}_f}=0,
\end{equation*}
where ${\bm \tau}_f$ and ${\bf n}_f$ denote a unit tangent vector field and unit normal vector field on $\mathfrak{S}_f$, respectively;
\item[(vii)] For the given background constant $B_0$ in \eqref{Back-B}, $B(\rho,{\bf u},p)\equiv B_0$ holds in $\overline{\Om}$.
\end{itemize}
\end{problem}

\begin{figure}[!h]
\centering
\includegraphics[scale=0.7]{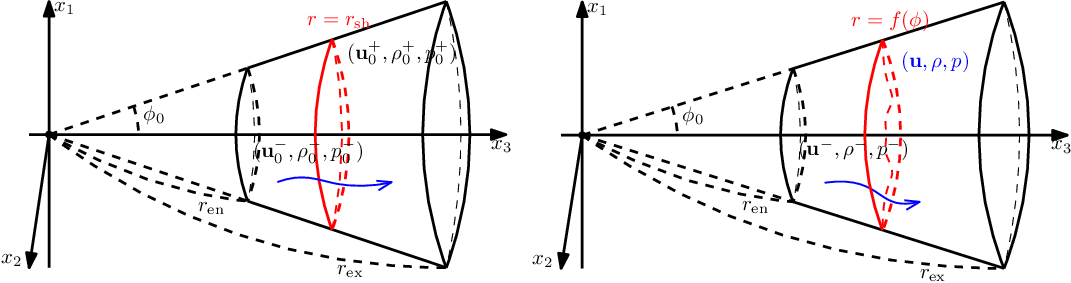}
\caption{Left:background solution, right:problem}
\end{figure}

We can rewrite Problem \ref{S-div-Prob} as a free boundary problem.
\begin{problem}\label{re-div-Prob}
Under the same assumptions of Problem \ref{S-div-Prob}, find a function
$$f:[0,\phi_0]\longrightarrow\left(r_{\rm sh}-\frac{1}{4},r_{\rm sh}+\frac{1}{4}\right)$$ and a weak solution $U=(\rho,{\bf u},p)$ to the Euler system \eqref{Euler-system-B} in $\Om_f^+:=\Om\cap\{r>f(\phi)\}$ such that the following properties hold:
\begin{itemize}
\item[(i)]
$\rho>0$ holds in $\overline{\Om_f^+}$;
\item[(ii)] In  $\Om_f^+$,
 ${\lvert{\bf u}\rvert}<c(\rho,p)$ for the sound speed $c(\rho,p):=\sqrt{\frac{\gamma p}{\rho}};$
 \item[(iii)] On the exit $\Gamma_{\rm ex}$, 
 $p=p_{\rm ex}$ holds;
\item[(iv)] On $\Gamma_{{\rm w},f}^+(:=\partial\Om_f^+\cap\Gamma_{\rm w})$, 
${\bf u}\cdot{\bf e}_\phi=0$ holds;
\item[(v)] On $\mathfrak{S}_f$, $U$ satisfies the boundary conditions
\begin{equation}\label{RH-re}
\left\{\begin{split}
&{\bf u}\cdot{\bm \tau}_f={\bf u}^-\cdot{\bm\tau}_f,\quad \rho{\bf u}\cdot{\bf n}_f=\rho^-{\bf u}^-\cdot{\bf n}_f,\\
&\rho({\bf u}\cdot{\bf n}_f)^2+p=\rho^-({\bf u}^-\cdot{\bf n}_f)^2+p^-,
\end{split}\right.
\end{equation}
where ${\bm \tau}_f$ and ${\bf n}_f$ denote a unit tangent vector field and unit normal vector field on $\mathfrak{S}_f$, respectively;
\item[(vi)] For the given constant $B_0$ in \eqref{Back-B}, $B(\rho,{\bf u},p)\equiv B_0$ holds in $\overline{\Om_f^+}$.
\end{itemize}
\end{problem}

Problem \ref{re-div-Prob} has a unique solution if the upstream supersonic solution and the exit pressure are small perturbation of the background solution. 

\begin{theorem}\label{Main-Theorem} 
Let $(\rho^-,{\bf u}^-,p^-,p_{\rm ex})$ be from Problem \ref{S-div-Prob}.

${\rm (a)}$ For any fixed $\alpha\in(\frac{1}{2},1)$, there exists a small constant $\sigma_1>0$ depending only on $\rho_0^{\pm},\partial_r\varphi_0^{\pm},p_0^{\pm},\gamma,\alpha,r_{\rm en},r_{\rm ex}$, and $\phi_0$ so that if
\begin{equation*}
\sigma(\rho^-,{\bf u}^-,p^-,p_{\rm ex})
\le \sigma_1,
\end{equation*}
 then there exists an axisymmetric solution $U=(\rho,{\bf u},p)$ of Problem \ref{re-div-Prob} with a transonic shock $\mathfrak{S}_f(={\Omega_f^+}\cap\{r=f(\phi)\})$ that satisfies 
{\small \begin{equation}\label{Thm2.1-uniq-est} 
\|f-r_{\rm sh}\|_{2,\alpha,(0,\phi_0)}^{(-1-\alpha,\{\phi=\phi_0\})}+\|(\rho,{\bf u},p)-(\rho_0^+,\nabla\varphi_0^+,p_0^+)\|_{1,\alpha,\Om_f^+}^{(-\alpha,\Gamma_{\rm w})}\le C\sigma(\rho^-,{\bf u}^-,p^-,p_{\rm ex}),
\end{equation}}
where the constant $C>0$ depends only on $(\rho_0^{\pm},\partial_r\varphi_0^{\pm},p_0^{\pm},\gamma,\alpha,r_{\rm en},r_{\rm ex},\phi_0)$.

${\rm (b)}$ There exists a small constant $\tilde{\sigma}_1\in(0,\sigma_1]$ depending only on  $\rho_0^{\pm},\partial_r\varphi_0^{\pm},p_0^{\pm},\gamma,\alpha,$ $r_{\rm en},$ $r_{\rm ex}$, and $\phi_0$ so that if
$$\sigma(\rho^-,{\bf u}^-,p^-,p_{\rm ex})\le \tilde{\sigma}_1,$$
then the axisymmetric solution that satisfies the estimate \eqref{Thm2.1-uniq-est} is unique.
\end{theorem}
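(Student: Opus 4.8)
The plan is to derive Theorem \ref{Main-Theorem} from Theorem \ref{Helmholtz-Theorem}, which asserts solvability of the Helmholtz-decomposed free boundary problem. The strategy has two parts mirroring statements (a) and (b): first translate a solution of the reformulated (Helmholtz) problem back into a weak solution of the Euler system with a transonic shock, and then run a separate uniqueness argument in the original variables.

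For part (a), I would start from the solution $(\varphi, \bm{V}, \ldots)$ and shock function $f$ produced by Theorem \ref{Helmholtz-Theorem}, and set $\bm{u} = \nabla\varphi + \nabla\times\bm{V}$, recovering $\rho$ and $p$ from the Bernoulli relation \eqref{Back-B} and the algebraic relations built into the Helmholtz formulation. The key steps: (1) verify that this $(\rho,\bm{u},p)$ is a classical solution of the full Euler system \eqref{Euler-system-B} in $\Om_f^+$ — this requires checking that the transport equations for the vorticity/angular-momentum-density quantities carried along streamlines are equivalent to the second and third equations of \eqref{Euler-system-B}, which is exactly the point of the Helmholtz reformulation and should follow from the construction; (2) check axisymmetry is preserved; (3) verify the Rankine-Hugoniot conditions \eqref{RH-re} on $\mathfrak{S}_f$, in particular that the reformulated boundary conditions on the shock imply $[\bm{u}\cdot\bm{\tau}_f] = [\rho\bm{u}\cdot\bm{n}_f] = [\rho(\bm{u}\cdot\bm{n}_f)^2 + p] = 0$; (4) confirm the remaining conditions (iii)-(vi) of Problem \ref{re-div-Prob} — exit pressure, slip condition, subsonicity, positivity of $\rho$ — which for small $\sigma$ follow by continuity from the background solution and the estimate of Theorem \ref{Helmholtz-Theorem}; (5) deduce the weighted H\"older estimate \eqref{Thm2.1-uniq-est} from the corresponding estimate in Theorem \ref{Helmholtz-Theorem} together with the regularity of the nonlinear maps relating $(\varphi,\bm{V})$ to $(\rho,\bm{u},p)$. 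I expect most of this to be bookkeeping once the Helmholtz formulation is set up correctly.

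For part (b), uniqueness, I would argue that any axisymmetric solution of Problem \ref{re-div-Prob} satisfying \eqref{Thm2.1-uniq-est} with $\sigma$ small enough must, via the Helmholtz decomposition applied in the (a priori unknown but uniformly bounded) domain $\Om_f^+$, give rise to a solution of the reformulated problem lying in the solution class to which the uniqueness part of Theorem \ref{Helmholtz-Theorem} applies; then invoke that uniqueness. The subtlety here is that the decomposition $\bm{u} = \nabla\varphi + \nabla\times\bm{V}$ is not canonical, so one must fix a gauge (boundary conditions on $\varphi$ and $\bm{V}$ consistent with the ones used to set up Theorem \ref{Helmholtz-Theorem}) and verify that the resulting $\varphi,\bm{V}$ inherit the required weighted estimates from those assumed on $(\rho,\bm{u},p,f)$ — this is where the threshold $\tilde\sigma_1 \le \sigma_1$ may need to be taken smaller than in part (a).

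The main obstacle will be step (3)/(b): ensuring the equivalence between the shock conditions in the Helmholtz-reformulated problem and the genuine Rankine-Hugoniot conditions \eqref{RH-re}, and — more delicate for uniqueness — showing that an arbitrary solution of Problem \ref{re-div-Prob} can be \emph{reconstructed} as a solution of the reformulated problem with no loss, i.e. that the reformulation is not merely sufficient but captures all solutions in the relevant class. This requires carefully tracking how the gauge choice for $\bm{V}$ interacts with the corner singularity at $\Gamma_{\rm w}\cap\mathfrak{S}_f$ and confirming the weighted norms transfer both ways; once that correspondence is established in both directions, Theorem \ref{Main-Theorem} follows immediately from Theorem \ref{Helmholtz-Theorem}.
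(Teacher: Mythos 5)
Your proposal matches the paper's Section 7 argument: part (a) is exactly the translation of the Helmholtz solution back to $(\rho,\bm{u},p)$ via the algebraic relations, and part (b) is exactly the gauge-fixed reconstruction you describe, which the paper realizes concretely by defining $\psi^{(k)}\bm{e}_\theta$ as the solution of the linear elliptic problem \eqref{re-re-psi} and $\varphi^{(k)}$ by radial integration as in \eqref{re-re-varphi}, then invoking Theorem \ref{Helmholtz-Theorem}(b) and the Bernoulli relation to recover equality of $(\rho,\bm{u},p)$. The potential obstacles you flag (equivalence of shock conditions, gauge fixing, transfer of weighted norms) are precisely the points the paper addresses, so the approach is essentially the same.
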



\section{Reformulation of Theorem \ref{Main-Theorem} via Helmholtz decomposition}\label{Sec-HD}
\subsection{Main Theorem}
Suppose that the smooth solution $(\rho, {\bf u},p)$ of the Euler system \eqref{Euler-system-B} is axisymmetric, that is,
\begin{equation*}
\rho=\rho(r,\phi),\quad {\bf u}=u_r(r,\phi){\bf e}_r+u_{\phi}(r,\phi){\bf e}_{\phi}+u_{\theta}(r,\phi){\bf e}_{\theta},\quad p=p(r,\phi),
\end{equation*}
and suppose that $\rho>0$, $u_r>0$, $p>0$, and the Bernoulli invariant $B$ is a constant $B_0$.
For $r>0$ and $\phi\in(0,\pi)$, the Euler system is equivalent to the following system:
\begin{equation}\label{axi-sys}
\left\{\begin{split}
&\partial_r(r^2\sin\phi \rho u_r)+\partial_{\phi}(r\sin\phi\rho u_{\phi})=0,\\
&\rho u_r\partial_r u_{\phi}+\frac{1}{r}\rho u_{\phi}\partial_{\phi}u_{\phi}+\frac{1}{r}\partial_{\phi}(S\rho^{\gamma})+\frac{\rho u_r u_{\phi}}{r}-\frac{\rho \Lambda^2\cot\phi}{r^3\sin^2\phi}=0,\\
&(r^2\sin\phi \rho u_r)\partial_r\Lambda+(r\sin\phi\rho u_{\phi})\partial_{\phi}\Lambda=0,\\
&(r^2\sin\phi \rho u_r)\partial_rS+(r\sin\phi\rho u_{\phi})\partial_{\phi}S=0,\\
\end{split}\right.
\end{equation}
where $\Lambda$ and $S$ are defined by 
\begin{equation*}
\Lambda(r,\phi):=(r\sin\phi)u_{\theta}(r,\phi)\quad\mbox{and}\quad S(r,\phi):=\left(\frac{p}{\rho^{\gamma}}\right)(r,\phi).
\end{equation*}
We call $\Lambda$ and $S$ respectively the angular momentum density and entropy of the flow.

For a function $f:[0,\phi_0]\to(r_{\rm sh}-\frac{1}{4},r_{\rm sh}+\frac{1}{4})$ to be determined,
we express ${\bf u}$ as
\begin{equation*}
{\bf u}=\nabla\varphi+\mbox{curl}{\bf V}\quad\mbox{in}\quad\Om_f^+
\end{equation*}
for axisymmetric functions $\varphi=\varphi(r,\phi)$ and ${\bf V}=h(r,\phi){\bf e}_{\phi}+\psi(r,\phi){\bf e}_{\theta}$.
If $\varphi$, $h$ and $\psi$ are $C^2$, then a direct computation yields
\begin{equation}\label{u-def-q}
{\bf u}=\nabla\varphi+\nabla\times(\psi{\bf e}_{\theta})+\left(\frac{\Lambda}{r\sin\phi}\right){\bf e}_{\theta}
=:{\bf q}\left(\nabla\varphi,\nabla\times(\psi{\bf e}_{\theta}),\frac{\Lambda}{r\sin\phi}\right)
\end{equation}
and
\begin{equation}\label{u-com}
u_r=\partial_r\varphi+\frac{1}{r\sin\phi}\partial_{\phi}((\sin\phi) \psi),\,\, u_{\phi}=\frac{1}{r}(\partial_{\phi}\varphi)-\frac{1}{r}\partial_r(r\psi),\,\, u_{\theta}=\frac{\Lambda}{r\sin\phi}.
\end{equation}
With this expression, the density $\rho$ can be written in terms of $(S,\nabla\varphi,\nabla\times(\psi{\bf e}_{\theta}),\frac{\Lambda}{r\sin\phi}{\bf e}_{\theta})$ as follows:
\begin{equation*}
\rho=\varrho\left(S,\nabla\varphi+\nabla\times(\psi{\bf e}_{\theta})+\frac{\Lambda}{r\sin\phi}{\bf e}_{\theta}\right)
\end{equation*}
for a function $\varrho$ defined by 
\begin{equation}\label{def-H}
\varrho(\eta,{\bf q}):=\left[\frac{\gamma-1}{\gamma\eta}\left(B_0-\frac{1}{2}\lvert{\bf q}\rvert^2\right)\right]^{1/(\gamma-1)}
\mbox{ for }\eta\in\mathbb{R}, \,{\bf q}\in\mathbb{R}^3.
\end{equation}
For notational simplicity, let us set 
\begin{equation*}
{\bf t}:={\bf q}\left(\nabla\varphi,\nabla\times(\psi{\bf e}_{\theta}),\frac{\Lambda}{r\sin\phi}\right)-\nabla\varphi\left(=\mbox{curl}{\bf V}\right).
\end{equation*}

When the entropy is strictly positive, i.e., $S>0$,  the Euler system is equivalent to the following system: 
\begin{equation}\label{E-re}
\left\{\begin{split}
&\mbox{div}\left({\bf A}(\nabla\varphi,{\bf t})\right)=0,\\
&-\Delta(\psi{\bf e}_{\theta})=G(r,\phi,S,\Lambda,\partial_{\phi}S,\partial_{\phi}\Lambda,\nabla\varphi+{\bf t}){\bf e}_{\theta},\\
&{\bf A}(\nabla\varphi,{\bf t})\cdot\nabla \Lambda= 0,\\
&{\bf A}(\nabla\varphi,{\bf t})\cdot\nabla S=0,
\end{split}\right.
\end{equation}
where ${\bf A}$ and $G$ are defined by
\begin{equation}\label{def-H-G}
\left.\begin{split}
&{\bf A}({\bf s}_1,{\bf s}_2):=\left(B_0-\frac{1}{2}\lvert{\bf s}_1+{\bf s}_2\rvert^2\right)^{\frac{1}{\gamma-1}}({\bf s}_1+{\bf s}_2),\\
&G(r,\phi,\eta_1,\eta_2,\eta_3,\eta_4,{\bf q}):=\frac{1}{r({\bf q}\cdot{\bf e}_r)}\left(\frac{\eta_3\varrho^{\gamma-1}(\eta_1, {\bf q})}{\gamma-1}+\frac{\eta_2\eta_4}{r^2\sin^2\phi}\right)
\end{split}
\right.
\end{equation}
for ${\bf s}_1,{\bf s}_2,{\bf q}\in\mathbb{R}^3$, $\eta_1,\eta_2,\eta_3,\eta_4\in\mathbb{R}$.

The given axisymmetric supersonic solution $(\rho^-,{\bf u}^-,p^-)$ can be written  as 
\begin{equation}\label{super-HD}
\rho^-=\varrho(S^-,{\bf u}^-),\,\, {\bf u}^-= {\bf q}\left(\nabla\varphi^-,\nabla\times(\psi^-{\bf e}_{\theta}),\frac{\Lambda^-}{r\sin\phi}\right),\,\, p^-=S^-\varrho^{\gamma}(S^-,{\bf u}^-)
\end{equation}
for axisymmetric functions $S^-=S^-(r,\phi)$, $\varphi^-=\varphi^-(r,\phi)$, $\psi^-=\psi^-(r,\phi)$, $\Lambda^-=\Lambda^-(r,\phi)$.
For notational simplicity, let us set ${\bf t}^-$ as
\begin{equation}\label{t--}
{\bf t}^-:={\bf q}\left(\nabla\varphi^-,\nabla\times(\psi^-{\bf e}_{\theta}),\frac{\Lambda^-}{r\sin\phi}\right)-\nabla\varphi^-.
\end{equation}



Now we derive the boundary conditions for $(f,S,\Lambda,\varphi,\psi{\bf e}_{\theta})$ to satisfy the physical boundary conditions in Problem \ref{re-div-Prob}:
\begin{itemize}
\item[(a)] (Slip boundary condition) By  \eqref{u-com}, the slip boundary condition (${\bf u}\cdot{\bf e}_{\phi}=0$ on $\Gamma_{\rm w}$) is equivalent to 
	\begin{equation*}
	\frac{1}{r}\left(\partial_{\phi}\varphi-\partial_r(r\psi)\right)=0\,\,\mbox{on}\,\,\Gamma_{\rm w}.
	\end{equation*}
	If we prescribe the boundary conditions for $(\varphi,\psi{\bf e}_{\theta})$ on $\Gamma_{\rm w}$ as 
	\begin{equation}\label{bd-W}
	\nabla\varphi\cdot{\bf e}_{\phi}=0\mbox{ and }\psi{\bf e}_{\theta}={\bf 0}\mbox{ on }\Gamma_{\rm w},
	\end{equation}
	then the slip boundary condition holds.
\item[(b)] (Exit pressure condition) In terms of $(S,\Lambda,\varphi,\psi{\bf e}_{\theta})$, the exit condition can be rewritten as 
\begin{equation}\label{bd-EX}
S\varrho^{\gamma}\left(S,{\bf q}\left(\nabla\varphi,\nabla\times(\psi{\bf e}_{\theta}),\frac{\Lambda}{r\sin\phi}\right)\right)=p_{\rm ex}\mbox{ on }\Gamma_{\rm ex}.
\end{equation}
\item[(c)] (Rankine-Hugoniot conditions) In terms of $(f,S,\Lambda, \varphi,\psi{\bf e}_{\theta})$,  the Rankine-Hugoniot conditions in \eqref{RH-re} can be rewritten as 
\begin{eqnarray}
&\label{HD-RH1}&{\bf q}\cdot{\bm\tau}_{f}={\bf u}^-\cdot{\bm\tau}_f,\quad{\bf q}\cdot{\bf e}_{\theta}={\bf u}^-\cdot{\bf e}_{\theta},\\
&\label{HD-RH2}&\varrho(S,{\bf q})({\bf q}\cdot{\bf n}_f)=\rho^-({\bf u}^-\cdot{\bf n}_f),\\
&\label{HD-RH3}&\varrho(S,{\bf q})({\bf q}\cdot{\bf n}_f)^2+S\varrho^{\gamma}(S,{\bf q})=\rho^-({\bf u}^-\cdot{\bf n}_f)^2+p^-
\end{eqnarray}
for ${\bf q}$, ${\bm \tau}_f$, and ${\bf n}_f$ defined by 
\begin{equation*}
\begin{split}
&{\bf q}:={\bf q}\left(\nabla\varphi,\nabla\times(\psi{\bf e}_{\theta}),\frac{\Lambda}{r\sin\phi}\right),\\
&{\bm \tau}_{f}:=\frac{f'(\phi){\bf e}_r+\frac{1}{f(\phi)}{\bf e}_{\phi}}{\sqrt{\frac{1}{|f(\phi)|^2}+\lvert f'(\phi)\rvert^2}},\quad\mbox{and}\quad
{\bf n}_{f}:=\frac{\frac{1}{f(\phi)}{\bf e}_r-f'(\phi){\bf e}_{\phi}}{\sqrt{\frac{1}{|f(\phi)|^2}+\lvert f'(\phi)\rvert^2}}.
\end{split}
\end{equation*}
\item[(i)]
Obviously, the conditions in \eqref{HD-RH1} are equivalent to
\begin{equation*}
(\nabla\varphi+{\bf t})\cdot{\bm\tau}_{f}=(\nabla\varphi^-+{\bf t}^-)\cdot{\bm\tau}_{f},
\quad\frac{\Lambda}{r\sin\phi}=\frac{\Lambda^-}{r\sin\phi}.
\end{equation*}
If we prescribe the boundary conditions for $(\varphi,\psi{\bf e}_{\theta},\Lambda)$ on $\mathfrak{S}_f$ as
\begin{equation}\label{S-phi-Lam}
\varphi=\varphi^-,\quad [\nabla\times(\psi{\bf e}_{\theta})]\cdot{\bm\tau}_{f}=[\nabla\times(\psi^-{\bf e}_{\theta})]\cdot{\bm\tau}_{f},\quad \Lambda=\Lambda^-,
\end{equation}
then the conditions in \eqref{HD-RH1} hold.
The condition for $\psi{\bf e}_{\theta}$ in \eqref{S-phi-Lam} is equivalent to 
\begin{equation*}\label{S-psi}
-\nabla(\psi{\bf e}_{\theta})\cdot{\bf n}_f+\mu(f,f')(\psi{\bf e}_{\theta})=\mathcal{A}(f,f'){\bf e}_{\theta}
\end{equation*}
for $\mu(f,f')$ and $\mathcal{A}(f,f')$ defined by 
\begin{equation}\label{def-mu}
{\small
\begin{split}
\mu(f,f'):=&\frac{f'(\phi)\cos\phi}{f(\phi)\sin\phi}-\frac{1}{|f(\phi)|^2},\\
\mathcal{A}(f,f'):=&\frac{1}{\sqrt{\lvert f'(\phi)\rvert^2+\frac{1}{|f(\phi)|^2}}}\left[\frac{f'(\phi)}{f(\phi)\sin\phi}\partial_{\phi}((\sin\phi)\psi^-)-\frac{\psi^-+f\partial_r\psi^-}{|f(\phi)|^2}\right].
\end{split}}
\end{equation}
\item[(ii)] Note that 
\begin{equation}\label{B-equal}
B\equiv B_0=\frac{\lvert{\bf u}^{\pm}\rvert^2}{2}+\frac{\gamma p^{\pm}}{(\gamma-1)\rho^{\pm}}\quad\mbox{in}\quad\overline{\Om}.
\end{equation}
If $(\varphi,\psi{\bf e}_{\theta},\Lambda,S,f)$ satisfy
\begin{eqnarray}
&\label{K-mul}&-\nabla\varphi\cdot{\bf n}_f=-\frac{K_s(f')}{{\bf u}^-\cdot{\bf n}_f}+{\bf t}\cdot{\bf n}_f,\\
&\label{S-shock}&S=\left[\rho^-({\bf u}^-\cdot{\bf n}_f)^2+p^--\rho^-K_s(f')\right]\left[\frac{\rho^-({\bf u}^-\cdot{\bf n}_f)^2}{K_s(f')}\right]^{-\gamma}\mbox{ on }\mathfrak{S}_f
\end{eqnarray}
for $K_s(f')$ defined by 
\begin{equation}\label{def-KS}
K_s(f'):=\frac{2(\gamma-1)}{\gamma+1}\left(\frac{1}{2}\lvert {\bf u^-}\cdot{\bf n}_f\rvert^2+\frac{\gamma p^-}{(\gamma-1)\rho^-}\right),
\end{equation}
then a direct computation with using the definition \eqref{def-H} of $\varrho$  and \eqref{B-equal}-\eqref{S-shock} implies that 
\begin{equation}\label{re-H-RH}
\varrho\left(S,{\bf q}\right)=\frac{\rho^-({\bf u}^-\cdot{\bf n}_f)^2}{K_s(f')}\quad\mbox{on}\quad\mathfrak{S}_f.
\end{equation}
Combining \eqref{K-mul} and \eqref{re-H-RH} gives \eqref{HD-RH2}.
\item[(iii)] A direct computation with using \eqref{HD-RH2}, \eqref{S-shock}, and \eqref{re-H-RH} implies \eqref{HD-RH3}.
Then, we prescribe the boundary conditions for $(f,S,\Lambda, \varphi,\psi{\bf e}_{\theta})$ on $\mathfrak{S}_f$ as follows:
\begin{equation}\label{boundary-HD}
\left\{
\begin{split}
	\begin{split}
	&S=S_{\rm sh}(f'),\quad \Lambda=\Lambda^-,\quad \varphi=\varphi^-,\\
	&-\nabla\varphi\cdot{\bf n}_f=-\frac{K_s(f')}{{\bf u}^-\cdot{\bf n}_f}+{\bf t}\cdot{\bf n}_f,\\
	&-\nabla(\psi{\bf e}_{\theta})\cdot{\bf n}_f+\mu(f,f')(\psi{\bf e}_{\theta})=\mathcal{A}(f,f'){\bf e}_{\theta}	\end{split}
\end{split}\right.
\end{equation} 
with a function $S_{\rm sh}(f')$ defined by 
\begin{equation}\label{def-A}
\begin{split}
&S_{\rm sh}(f'):=\left[\rho^-({\bf u}^-\cdot{\bf n}_f)^2+p^--\rho^-K_s(f')\right]\left[\frac{\rho^-({\bf u}^-\cdot{\bf n}_f)^2}{K_s(f')}\right]^{-\gamma}.\\
\end{split}
\end{equation}
\end{itemize}

Now we state the main results in terms of $(f,S,\Lambda, \varphi,\psi{\bf e}_{\theta})$ as Theorem \ref{Helmholtz-Theorem} below. 
\begin{theorem}\label{Helmholtz-Theorem}
Let $(\rho^-,{\bf u}^-,p^-,p_{\rm ex})$ be from Problem \ref{S-div-Prob}.

${\rm (a)}$
For any $\alpha\in(\frac{1}{2},1)$, there exists a small constant $\sigma_2>0$ depending only on $\partial_r\varphi_0^{\pm},\rho_0^{\pm},p_0^\pm,\gamma,\alpha,r_{\rm en},r_{\rm ex}$, and $\phi_0$ so that if
\begin{equation}\label{enu-sigma}
\sigma(\rho^-,{\bf u}^-,p^-,p_{\rm ex})\le\sigma_2,
\end{equation}
then the free boundary problem \eqref{E-re} with boundary conditions \eqref{bd-W}-\eqref{bd-EX} and \eqref{boundary-HD} has an axisymmetric solution $(f, S, \Lambda, \varphi, \psi{\bf e}_{\theta})$ that satisfies
\begin{equation}\label{Thm-HD-est}
\begin{split}
&\|f-r_{\rm sh}\|_{2,\alpha,(0,\phi_0)}^{(-1-\alpha,\{\phi=\phi_0\})}
+\|(S,\frac{\Lambda}{r\sin\phi}{\bf e}_{\theta})-(S_0^+,{\bf 0})\|_{1,\alpha,\Om_f^+}^{(-\alpha,\Gamma_{\rm w})}\\
&+\|\varphi-\varphi_0^+\|_{2,\alpha,\Om_f^+}^{(-1-\alpha,\Gamma_{\rm w})}+\|\psi{\bf e}_{\theta}\|_{2,\alpha,\Om_f^+}^{(-1-\alpha,\Gamma_{\rm w})}\le C\sigma(\rho^-,{\bf u}^-,p^-,p_{\rm ex})
\end{split}
\end{equation}
for $S_0^+:=\frac{p_0^+}{(\rho_0^+)^{\gamma}}$ and a constant $C>0$ depending only on $\partial_r\varphi_0^{\pm},\rho_0^{\pm},p_0^\pm,\gamma,\alpha,r_{\rm en},r_{\rm ex}$, and $\phi_0$.

${\rm (b)}$ There exists a small constant $\tilde{\sigma}_2\in(0,\sigma_2]$ depending only on $\partial_r\varphi_0^{\pm},\rho_0^{\pm},p_0^\pm,\gamma,\alpha,$ $r_{\rm en}$, $r_{\rm ex}$, and $\phi_0$ so that if
\begin{equation*}
\sigma(\rho^-,{\bf u}^-,p^-,p_{\rm ex})\le\tilde{\sigma}_2,
\end{equation*}
then the solution that satisfies the estimate \eqref{Thm-HD-est} is unique. 
\end{theorem}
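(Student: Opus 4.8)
The plan is to reformulate the free boundary problem \eqref{E-re} with boundary conditions \eqref{bd-W}--\eqref{bd-EX} and \eqref{boundary-HD} as a fixed point problem and then apply the Schauder fixed point theorem together with the weak implicit function theorem, exactly as outlined in the introduction. First I would fix an iteration set: a compact convex subset $\mathcal{K}$ of the space of tuples $(f, S, \Lambda, \psi{\bf e}_\theta)$ (together with a suitable normalization of $\varphi$), consisting of those tuples whose weighted H\"older norms are bounded by $C\sigma$ for the constant $C$ appearing in \eqref{Thm-HD-est}, and whose shock functions $f$ stay in $(r_{\rm sh}-\frac14, r_{\rm sh}+\frac14)$. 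For $(\bar f, \bar S, \bar\Lambda, \bar\psi{\bf e}_\theta)\in\mathcal{K}$ I would freeze the nonlinear terms and the free boundary: use $\bar f$ to define the domain $\Omega_{\bar f}^+$, solve the transport equations (third and fourth equations of \eqref{E-re}) for $\Lambda$ and $S$ along the integral curves of the frozen vector field ${\bf A}(\nabla\bar\varphi, \bar{\bf t})$ with data $\Lambda^-$ and $S_{\rm sh}(\bar f')$ on $\mathfrak{S}_{\bar f}$, solve the linear elliptic equation $-\Delta(\psi{\bf e}_\theta) = G(\cdots){\bf e}_\theta$ with the frozen right-hand side and the boundary conditions \eqref{bd-W}, the Robin condition in \eqref{boundary-HD}, and a Dirichlet/Neumann condition on $\Gamma_{\rm en}\cup\Gamma_{\rm ex}$ dictated by the upstream data, and finally solve the second-order equation ${\rm div}({\bf A}(\nabla\varphi,\bar{\bf t}))=0$ for $\varphi$ with the oblique derivative condition \eqref{K-mul} on $\mathfrak{S}_{\bar f}$, the exit condition \eqref{bd-EX} (linearized appropriately) on $\Gamma_{\rm ex}$, and the slip condition \eqref{bd-W} on $\Gamma_{\rm w}$. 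The new shock position $f$ is then recovered from the Dirichlet condition $\varphi = \varphi^-$ on $\mathfrak{S}_f$ in \eqref{boundary-HD}, treated as an ODE for $f$ (this is the free-boundary-to-fixed-point mechanism of \cite{bae2011transonic}).

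The key technical inputs are the solvability and estimates for each of the linearized subproblems in the weighted H\"older spaces $C^{m,\alpha}_{(k,\Gamma_{\rm w})}$. The hardest of these is the elliptic problem for $\psi{\bf e}_\theta$: the Laplacian of an azimuthal vector field $-\Delta(\psi{\bf e}_\theta)$ carries the singular coefficient $\frac{1}{r^2\sin^2\phi}$ coming from the $\frac{1}{(r\sin\phi)^2}$ term, and the right-hand side $G$ in \eqref{def-H-G} has a $\frac{1}{r({\bf q}\cdot{\bf e}_r)}$ factor and a $\frac{1}{r^2\sin^2\phi}$ factor, so near the symmetry axis $\{\phi=0\}$ the equation is genuinely singular. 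Here I would invoke the method of \cite{bae20183, park2020transonic} for flows with nonzero vorticity and nonzero angular momentum density in three-dimensional cylinders, which was developed precisely to handle the solvability of elliptic equations with these singular coefficients; this is flagged in the introduction as the reason that machinery is imported. The two lemmas proved in Sections \ref{Sec-Pro} and \ref{Sec-5}, on which Proposition \ref{free-v-Pro} depends, should supply exactly these estimates, together with the corner-singularity estimates at the intersection $\mathfrak{S}_f\cap\Gamma_{\rm w}$ and $\Gamma_{\rm ex}\cap\Gamma_{\rm w}$ that force the use of the weighted norms and the restriction $\alpha\in(\frac12,1)$.

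With the linear theory in hand, the Schauder argument is then: (1) show the iteration map sends $\mathcal{K}$ into itself, which requires choosing $\sigma_2$ small enough that the a priori constant $C$ in each subproblem closes the estimate \eqref{Thm-HD-est} --- this is where the smallness of $\sigma$ is consumed and where one must be careful that the constant multiplying $\sigma$ does not blow up through the composition of the four solution operators; (2) show the map is continuous (and the image is precompact) in a weaker norm, e.g. $C^{1,\alpha'}$ with $\alpha'<\alpha$, so that Schauder applies on the compact convex set $\mathcal{K}$ in the weaker topology while the uniform bound in the stronger norm is preserved by weak compactness. A fixed point is then a solution of the full coupled system, giving part (a). For the uniqueness statement (b), I would take two solutions $(f_i, S_i, \Lambda_i, \varphi_i, \psi_i{\bf e}_\theta)$ satisfying \eqref{Thm-HD-est}, subtract the equations and boundary conditions, and estimate the differences: because every nonlinearity is $C^1$ and the two solutions are $O(\sigma)$-close to the background, the difference satisfies a linear system whose coefficients are $O(\sigma)$ perturbations of the background coefficients, and a contraction estimate in a suitably weak weighted norm --- with constant $C\tilde\sigma_2 < 1$ --- forces the differences to vanish. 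This is the role of the ``weak implicit function theorem'' mentioned in the abstract, which lets one differentiate the solution operators even though the natural norm for the a priori estimate and the natural norm for uniqueness differ.

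The step I expect to be the main obstacle is the elliptic estimate for $\psi{\bf e}_\theta$ near the symmetry axis combined with the corner singularity at $\mathfrak{S}_f\cap\Gamma_{\rm w}$: one must simultaneously control the singular zeroth-order coefficient $\frac{1}{r^2\sin^2\phi}$ (which is harmless away from the axis but needs the structure $\psi{\bf e}_\theta\to 0$ as $\phi\to0$, i.e. $\psi = O(\sin\phi)$, to be absorbed) and the loss of regularity at the corner where the free boundary meets the wall, which is what forces the weighted spaces and the exponent restriction. Getting the weighted norm of $-\Delta(\psi{\bf e}_\theta)=G{\bf e}_\theta$ to close with a constant independent of the iterate --- so that it feeds back correctly into the self-map property --- is the delicate point, and it is exactly why the paper isolates this as Proposition \ref{free-v-Pro} with two supporting lemmas.
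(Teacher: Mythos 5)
You have the right general toolkit---Schauder fixed point in weighted H\"older spaces, a transport solve for $(S,\Lambda)$ along stream lines, a singular elliptic solve for $\psi{\bf e}_\theta$ near the axis and corners, a contraction argument for uniqueness---but you have missed the structural reduction that carries the whole argument: the paper does \emph{not} attack the exit pressure condition \eqref{bd-EX} directly inside the fixed-point scheme. It first replaces \eqref{bd-EX} by the auxiliary Neumann-type condition
\begin{equation*}
\left(B_0-\tfrac{1}{2}\lvert\nabla\varphi+{\bf t}\rvert^2\right)^{\frac{1}{\gamma-1}}\nabla\varphi\cdot{\bf e}_r=v_{\rm ex}\quad\mbox{on}\quad\Gamma_{\rm ex},
\end{equation*}
which involves only $\varphi$ and is held \emph{fixed} throughout the iteration. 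Proposition \ref{free-v-Pro} then solves the resulting auxiliary free boundary problem \eqref{free-v} for each such $v_{\rm ex}$ near $v_c$, by a three-level nested construction (Schauder in ${\bf W}$, inside it Schauder in $\Lambda$, inside it Schauder in $\chi=\varphi-\varphi_0^+$, with the shock $f$ recovered at the innermost level from the Dirichlet condition via the implicit function theorem in Lemma \ref{Lemm-Impli}). Only afterwards, at the top level, does the paper form the solution-dependent pressure trace $\mathcal{P}(v_{\rm ex}) := p\vert_{\Gamma_{\rm ex}}$ and invert $\mathcal{P}$ near the background by the weak implicit function theorem of \cite[Proposition 5.1]{bae2011transonic}. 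Lemma \ref{P-empty} shows $D_v\mathcal{P} = b_1(I-T)$ with $T$ compact, and invertibility rests on a Fredholm alternative plus a nontrivial uniqueness statement for a linearized problem; this is how a $v_{\rm ex}$ with $\mathcal{P}(v_{\rm ex})=p_{\rm ex}$ is produced, with the estimate \eqref{v-vc} transferring the smallness.

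Your plan to ``linearize \eqref{bd-EX} appropriately'' by substituting the frozen $\bar S$ and then running one big Schauder loop on $(f,S,\Lambda,\psi{\bf e}_\theta)$ omits this separation, and it is not clear it closes. The exit pressure ties $\nabla\varphi\cdot{\bf e}_r$ on $\Gamma_{\rm ex}$ to the transported entropy $S$, which depends on the shock position $f$ through the initial data $S_{\rm sh}(f')$ and the transport along ${\bf A}(\nabla\varphi,{\bf t})$. Feeding $\bar S$ into the exit Neumann data makes the $(f,\varphi)$ subproblem change with the entropy iterate; the delicate implicit-function argument that recovers $f$ (and underlies the paper's uniqueness proofs) is posed for a fixed $v_{\rm ex}$ precisely to decouple this. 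You have also misattributed the weak implicit function theorem: it is not a device for the uniqueness contraction (``differentiating the solution operators''), but the existence mechanism for matching $p_{\rm ex}$ from the auxiliary problem; uniqueness in Theorem \ref{Helmholtz-Theorem}(b) is a separate argument that shows $D_v\mathcal{P}$ is still bounded and invertible on $C^{0,\alpha}_{(1-\alpha,\partial\Gamma_{\rm ex})}(\Gamma_{\rm ex})$ (Lemma \ref{lemma-DvP}) and then contracts $\|v_1-v_2\|$ using \eqref{v12-DvP} and Lemma \ref{lemma-est-v12}. Finally, the boundary conditions you list for $\psi{\bf e}_\theta$ mention $\Gamma_{\rm en}$, which is not part of $\partial\Omega_f^+$; the correct conditions, from \eqref{free-v}, are a Robin condition on $\mathfrak{S}_f$ and homogeneous Dirichlet on $\partial\Omega_f^+\setminus\mathfrak{S}_f=(\Gamma_{\rm w}\cap\partial\Omega_f^+)\cup\Gamma_{\rm ex}$.
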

We can easily prove that Theorem \ref{Helmholtz-Theorem} implies Theorem \ref{Main-Theorem}. So the rest of the paper is devoted to proving Theorem \ref{Helmholtz-Theorem}.

Hereafter, a constant $C$ is said to be chosen depending only on the data if $C$ is chosen depending only on $\partial_r\varphi_0^{\pm},\rho_0^{\pm},p_0^{\pm},\gamma,\alpha,r_{\rm en},r_{\rm ex}$, and $\phi_0$.
Throughout this paper, each estimate constant $C$ is regarded to be depending only on the data unless otherwise specified.

\subsection{Outline of the proof of Theorem \ref{Helmholtz-Theorem}}\label{outline}

Let $v_{\rm ex}:\Gamma_{\rm ex}\to \mathbb{R}$ be an axisymmetric function and a small perturbation of the constant $v_c$ defined by 
\begin{equation}\label{v-c-def}
v_c:=\left(B_0-\frac{\lvert \partial_r\varphi_0^+(r_{\rm ex})\rvert^2}{2}\right)^{\frac{1}{\gamma-1}}\partial_r\varphi_0^+(r_{\rm ex}).
\end{equation}
We first solve the following free boundary problem for $(f,S,\Lambda,\varphi,{\bf W})$ associated with $v_{\rm ex}$:
\begin{equation}\label{free-v}
\begin{split}
&\left\{\begin{split}
	&\mbox{div}\left({\bf A}(\nabla\varphi,{\bf t})\right)=0\\
	&{\bf A}(\nabla\varphi,{\bf t})\cdot\nabla S=0,\quad
	{\bf A}(\nabla\varphi,{\bf t})\cdot\nabla \Lambda=0\\
	&-\Delta{\bf W}=G(r,\phi,S,\Lambda,\partial_{\phi}S,\partial_{\phi}\Lambda,\nabla\varphi+{\bf t}){\bf e}_{\theta}
	\end{split}\right.\quad\mbox{in}\quad\Om_f^+,\\
&\left\{\begin{split}
		&\varphi=\varphi^-,\quad-\nabla\varphi\cdot{\bf n}_f=-\frac{K_s(f')}{{\bf u}^-\cdot{\bf n}_f}+{\bf t}\cdot{\bf n}_f\quad\mbox{on}\quad\mathfrak{S}_f,\\
	&\nabla\varphi\cdot{\bf e}_{\phi}=0\quad\mbox{on}\quad\Gamma_{\rm w}\cap\partial\Om_f^+,\\
	&\left(B_0-\frac{\lvert\nabla\varphi+{\bf t}\rvert^2}{2}\right)^{\frac{1}{\gamma-1}}\nabla\varphi\cdot{\bf e}_r=v_{\rm ex}\quad\mbox{on}\quad\Gamma_{\rm ex},\\
	&S=S_{\rm sh}(f'),\quad \Lambda=\Lambda^-\quad\mbox{on}\quad\mathfrak{S}_f,\\
	&-\nabla{\bf W}\cdot{\bf n}_{f}+\mu(f,f'){\bf W}=\mathcal{A}(f,f'){\bf e}_{\theta}\,\mbox{ on }\,\mathfrak{S}_f,\quad
	{\bf W}={\bf 0}\,\mbox{ on }\,\partial\Om_f^+\backslash\mathfrak{S}_f,
	\end{split}\right.
\end{split}
\end{equation}
for ${\bf A}$, ${\bf t}$, $G$, $K_s(f')$, $S_{\rm sh}(f')$, $\mu(f,f')$, and $\mathcal{A}(f,f')$ defined in the previous subsection.

\begin{proposition}\label{free-v-Pro}
Let $(\rho^-,{\bf u}^-,p^-)$ be from Problem \ref{S-div-Prob}.
For notational simplicity, let us set 
{\small \begin{equation}\label{sigma-def-1}
\sigma(\rho^-,{\bf u}^-,p^-,v_{\rm ex}):=\|(\rho^-,{\bf u}^-,p^-)-(\rho_0^-,\nabla\varphi_0^-,p_0^-)\|_{2,\alpha,\Om}+\|v_{\rm ex}-v_c\|^{(-\alpha,\partial\Gamma_{\rm ex})}_{1,\alpha,\Gamma_{\rm ex}}.
\end{equation}}
\begin{itemize}
\item[(a)] For any $\alpha\in(\frac{1}{2},1)$, there exists a small constant $\sigma_2^{\ast}>0$ depending only on the data so that if
\begin{equation*}
\sigma(\rho^-,{\bf u}^-,p^-,v_{\rm ex})\le \sigma_2^{\ast},
\end{equation*}
then there exists an axisymmetric solution $(f,S,\Lambda,\varphi,{\bf W})$ of the free boundary problem \eqref{free-v} that satisfies the estimate 
\begin{equation}\label{free-v-est}
\begin{split}
&\|f-r_{\rm sh}\|_{2,\alpha,(0,\phi_0)}^{(-1-\alpha,\{\phi=\phi_0\})}
+\|(S,\frac{\Lambda}{r\sin\phi}{\bf e}_{\theta})-(S_0^+,{\bf 0})\|_{1,\alpha,\Om_f^+}^{(-\alpha,\Gamma_{\rm w})}\\
&+\|\varphi-\varphi_0^+\|_{2,\alpha,\Om_f^+}^{(-1-\alpha,\Gamma_{\rm w})}+\|{\bf W}\|_{2,\alpha,\Om_f^+}^{(-1-\alpha,\Gamma_{\rm w})}\le C\sigma(\rho^-,{\bf u}^-,p^-,v_{\rm ex})
\end{split}
\end{equation}
for a constant $C>0$ depending only on the data.
Moreover, ${\bf W}$ has of the form ${\bf W}=\psi{\bf e}_{\theta}$ for an axisymmetric function $\psi$.
\item[(b)] 
There exists a small constant $\tilde{\sigma}_2^{\ast}\in(0,\sigma_2^{\ast}]$ so that if 
$$\sigma(\rho^-,{\bf u}^-,p^-,v_{\rm ex})\le\tilde{\sigma}_2^{\ast},$$ then 
the solution of \eqref{free-v} that satisfies the estimate \eqref{free-v-est} is unique.
\end{itemize}
\end{proposition}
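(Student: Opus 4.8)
The plan is to solve \eqref{free-v} by a Schauder fixed point argument, together with a weak implicit function theorem used to locate the free boundary, exploiting that each constituent problem decouples once the other unknowns are frozen. Fix $\alpha\in(\tfrac12,1)$ and an auxiliary exponent $\alpha'\in(\tfrac12,\alpha)$. Given an ``input'' tuple $(\tilde f,\tilde S,\tilde\Lambda,\tilde\varphi,\tilde{\mathbf W})$ in a closed convex bounded set $\mathcal K$ sitting in the product of the weighted H\"older spaces on the left of \eqref{free-v-est} (with exponent $\alpha'$ and all norms bounded by $C_\ast\sigma$, where $\sigma:=\sigma(\rho^-,{\bf u}^-,p^-,v_{\rm ex})$), I would first flatten $\Om_{\tilde f}^+$ onto the fixed reference domain $\Om_0^+:=\Om\cap\{r>r_{\rm sh}\}$ by a change of variables depending smoothly on $\tilde f$, and set $\tilde{\mathbf t}:=\mathrm{curl}\,\tilde{\mathbf V}$ with $\tilde{\mathbf V}$ associated to $\tilde{\mathbf W}$. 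The goal is then to build an iteration map $\mathcal T:(\tilde f,\tilde S,\tilde\Lambda,\tilde\varphi,\tilde{\mathbf W})\mapsto(f,S,\Lambda,\varphi,{\mathbf W})$ whose fixed point solves \eqref{free-v}, and which is a compact self-map of $\mathcal K$.

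The map $\mathcal T$ is assembled from the three decoupled sub-problems. First I would integrate the transport equations ${\bf A}(\nabla\tilde\varphi,\tilde{\mathbf t})\cdot\nabla S=0$, ${\bf A}(\nabla\tilde\varphi,\tilde{\mathbf t})\cdot\nabla\Lambda=0$ with data $S=S_{\rm sh}(\tilde f')$, $\Lambda=\Lambda^-$ on $\mathfrak S_{\tilde f}$: since ${\bf A}(\nabla\tilde\varphi,\tilde{\mathbf t})$ is parallel to the velocity $\nabla\tilde\varphi+\tilde{\mathbf t}$, which for $\tilde\varphi$ near $\varphi_0^+$ is transversal to $\mathfrak S_{\tilde f}$, tangent to $\Gamw$, and transversal to $\Gamma_{\rm ex}$, the equations are solved along streamlines, giving $S,\Lambda$ with the claimed weighted regularity (note $\Lambda$ is essentially $\Lambda^-$ transported, vanishing to first order at the axis). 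Second, I would solve the vector Poisson problem $-\Delta{\mathbf W}=G(r,\phi,S,\Lambda,\partial_\phi S,\partial_\phi\Lambda,\nabla\tilde\varphi+\tilde{\mathbf t}){\mathbf e}_\theta$ with the oblique condition on $\mathfrak S_{\tilde f}$ and ${\mathbf W}={\mathbf 0}$ elsewhere; this is the step that needs the lemma on elliptic problems with coefficients singular along the axis $\{\phi=0\}$, and, since the forcing and all boundary data are axisymmetric and point in the ${\mathbf e}_\theta$-direction, uniqueness for the reduced scalar equation (a Hardy-type singular equation for $\psi$) shows the vector solution has the form $\psi{\mathbf e}_\theta$. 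Third, I would solve $\mathrm{div}\,{\bf A}(\nabla\varphi,\tilde{\mathbf t})=0$ on $\Om_{\tilde f}^+$: freezing the non-principal occurrences of $\nabla\varphi$ makes this uniformly elliptic (ellipticity being exactly the subsonic condition, valid near $\varphi_0^+$ via the Bernoulli relation), and I would impose $\varphi=\varphi^-$ on $\mathfrak S_{\tilde f}$, the slip condition $\nabla\varphi\cdot{\mathbf e}_\phi=0$ on $\Gamw$, and the exit condition, which — using strict monotonicity of $\xi\mapsto(B_0-\tfrac12\xi^2)^{1/(\gamma-1)}\xi$ on the subsonic range — is recast as a regular oblique (conormal) condition prescribing $\partial_r\varphi$ in terms of $v_{\rm ex}$ on $\Gamma_{\rm ex}$. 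This mixed Dirichlet–oblique problem needs no global compatibility condition and is handled by the second lemma; its solution has only weighted H\"older regularity because of the corner $\mathfrak S_{\tilde f}\cap\Gamw$, and the gain required to close the scheme is what forces $\alpha>\tfrac12$.

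It remains to update the free boundary. The one shock condition not yet used, namely $-\nabla\varphi\cdot{\mathbf n}_f=-K_s(f')/({\mathbf u}^-\cdot{\mathbf n}_f)+{\mathbf t}\cdot{\mathbf n}_f$ on $\mathfrak S_f$, combined with the traces of $\varphi$ from the previous step on $\{r=f(\phi)\}$, becomes a first-order ODE $f'=\mathcal G(\phi,f;\text{data})$ for the new shock function $f$; I would integrate it from $\phi=0$ with $f'(0)=0$ (axisymmetry), the remaining free constant $f(0)$ being determined by the scalar mass-flux balance $\int_{\mathfrak S_f}\rho^-{\mathbf u}^-\cdot{\mathbf n}_f\,dS=\int_{\Gamma_{\rm ex}}(\cdots)\,dS$. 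Since the traces at the corner lie only in a weighted H\"older class, solvability of this ODE/constant system is the place where the weak implicit function theorem (linearized around $r_{\rm sh}$) replaces the classical one. Finally, estimating $(f,S,\Lambda,\varphi,{\mathbf W})$ and choosing $C_\ast$ large and then $\sigma_2^\ast$ small yields $\mathcal T(\mathcal K)\subset\mathcal K$; continuity of $\mathcal T$ in the $\alpha'$-topology together with compactness of $\mathcal K$ (from the weighted $C^{2,\alpha}$ versus $C^{2,\alpha'}$ embedding) gives a fixed point by Schauder's theorem, which is the solution of \eqref{free-v} satisfying \eqref{free-v-est}.

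The hard part will be the interplay of the free boundary with the mixed/oblique elliptic problem at the corner $\Gamw\cap\mathfrak S_f$ and with the axis singularity of the $\psi$-equation — this is precisely what dictates the weighted-norm framework, the restriction $\alpha\in(\tfrac12,1)$, and the use of the weak implicit function theorem — along with keeping the scheme self-consistent despite $\nabla\varphi$ and ${\mathbf t}$ appearing in the coefficients of ${\bf A}$. For part (b), given two solutions obeying \eqref{free-v-est}, I would subtract them; using that all coefficients are $O(\sigma)$-close to the background, the difference satisfies a linear coupled system (transport, singular Poisson, mixed elliptic, and linearized shock ODE) whose solution operator has norm $O(\sigma)$, so the difference obeys $\|\cdot\|\le C\sigma\|\cdot\|$ and must vanish once $\sigma\le\tilde\sigma_2^\ast$ is small, giving uniqueness.
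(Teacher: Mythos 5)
Your overall architecture (a Schauder fixed-point iteration in a weighted H\"older framework, streamline integration for the transport equations for $S$ and $\Lambda$, an axisymmetric reduction to a scalar $\psi$-equation for the vector Poisson problem, an implicit-function-theorem step for the free boundary, and a contraction argument for uniqueness) matches the paper's in broad strokes, but your mechanism for updating the free boundary is wrong for this radial background and would prevent the scheme from closing. You propose to impose the Dirichlet shock condition $\varphi=\varphi^-$ when solving the potential problem and then to recover $f$ from the conormal condition $-\nabla\varphi\cdot{\bf n}_f=-K_s(f')/({\bf u}^-\cdot{\bf n}_f)+{\bf t}\cdot{\bf n}_f$, treated as a first-order ODE $f'=\mathcal{G}(\phi,f;\text{data})$ with $f(0)$ fixed by a mass-flux balance. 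No such ODE exists here: since $\partial_{f'}{\bf n}_f\big|_{f'=0}=-f\,{\bf e}_\phi$, every $f'$-derivative of the left and right sides at the background is a multiple of ${\bf u}^-\cdot{\bf e}_\phi$, $\nabla\varphi_0^+\cdot{\bf e}_\phi$, or ${\bf t}\cdot{\bf e}_\phi$, all of which vanish, so the $f'$-dependence is quadratically degenerate. What remains at linear order is an algebraic relation of the form $c_1(f-r_{\rm sh})+c_2\,\partial_r\chi\big|_{\mathfrak{S}}+\cdots=\text{data}$ that fixes $f$ pointwise in $\phi$; there is no ODE to integrate and no free constant $f(0)$ for a flux balance to determine, which would in fact be inconsistent with the pointwise constraint.

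Worse, this route loses a derivative: if $\chi\in C^{2,\alpha}_{(-1-\alpha,\Gamma_{\rm w})}$, then $\partial_r\chi\big|_{\mathfrak{S}}$ lies only in a weighted $C^{1,\alpha}$ class, so the new $f$ would land in $C^{1,\alpha}_{(-\alpha,\{\phi=\phi_0\})}$ rather than the $C^{2,\alpha}_{(-1-\alpha,\{\phi=\phi_0\})}$ required in \eqref{free-v-est}, and the iteration does not close. The paper makes the opposite choice: it solves the $\chi$-problem \eqref{lin-chi-pro} with the Venttsel-type conormal data \eqref{c2} (Lemma \ref{Lem-S-bd-re} trades the normal derivative for $\mu_f\chi$ plus lower-order terms), and then determines $f$ from the zeroth-order Dirichlet mismatch $\mathfrak{J}(f)=(\varphi^--\varphi)(f(\phi),\phi)$ via the classical implicit function theorem (Lemma \ref{Lemm-Impli}), whose nondegeneracy is exactly $\partial_r(\varphi_0^--\varphi_0^+)(r_{\rm sh})>0$; $\mathfrak{J}(f)$ keeps the $C^{2,\alpha}$ weighted regularity of $\chi$, so no derivative is lost. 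Two further mismatches to flag: the weak implicit function theorem is not used for the free boundary at all — it appears only later, in \S\ref{S-exi-Thm}, for the map $\mathcal{P}$ from $v_{\rm ex}$ to the exit pressure — and the paper runs a nested, not a single, Schauder scheme (outer on ${\bf W}$, inner on $\Lambda$, innermost on $\chi$), because the intermediate uniqueness result (Lemma \ref{Uni-ss}) is what makes the outer iteration map single-valued and continuous; your one-shot map $\mathcal{T}$ would still have to supply that.
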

An overview of the proof of Proposition \ref{free-v-Pro} is provided in  the next subsection, and the proof is completed in \S\ref{Section-Pro}. 

For notational simplicity, we define neighborhoods of $(\rho_0^-,\varphi_0^-,p_0^-,v_c)$ and $r_{\rm sh}$ with radius $R>0$ as follows: 
{\small \begin{equation}
\begin{split}
&\mathcal{B}_R(\rho_0^-,\varphi_0^-,p_0^-):=\left\{\begin{split}
										&(\rho^-,\varphi^-,p^-)\in C^{2,\alpha}(\overline{\Om})\times C^{3,\alpha}(\overline{\Om})\times C^{2,\alpha}(\overline{\Om}):\\
										&\rho^-,\varphi^-,p^-\mbox{ are axisymmetric},\\
										&\|\rho^--\rho_0^-\|_{2,\alpha,\Om}+\|\varphi^--\varphi_0^-\|_{3,\alpha,\Om}+\|p^--p_0^-\|_{2,\alpha,\Om}\le R\\
										\end{split}\right\},\\
&\mathcal{B}_R(v_c):=\left\{v_{\rm ex}\in C^{1,\alpha}_{(-\alpha,\partial\Gamma_{\rm ex})}({\Gamma_{\rm ex}}): 
		\begin{split}
		&v_{\rm ex}\mbox{ is axisymmetric},\\
		&\|v_{\rm ex}-v_c\|_{1,\alpha,\Gamma_{\rm ex}}^{(-\alpha,\partial\Gamma_{\rm ex})}\le R\end{split}\right\},\\
&\mathcal{B}_R(\rho_0^-,\varphi_0^-,p_0^-,v_c):=\mathcal{B}_R(\rho_0^-,\varphi_0^-,p_0^-)\times\mathcal{B}_R(v_c),\\
&\mathcal{B}_R(r_{\rm sh}):=\left\{f\in C_{(-1-\alpha,\{\phi=\phi_0\})}^{2,\alpha}((0,\phi_0)):\,\|f-r_{\rm sh}\|_{2,\alpha,(0,\phi_0)}^{(-1-\alpha,\{\phi=\phi_0\})}\le R\right\}.
\end{split}
\end{equation}}
Note that, for sufficiently small $R>0$, $\mathcal{B}_R(\rho_0^-,\varphi_0^-,p_0^-,v_c)$ and $\mathcal{B}_R(r_{\rm sh})$ are sets of small perturbations of $(\rho_0^-,\varphi_0^-,p_0^-,v_c)$ and $r_{\rm sh}$, respectively.

For positive constants $\sigma$ and $C$, 
define a mapping $\mathcal{P}:\mathcal{B}_{\sigma}(\rho_0^-,\varphi_0^-,p_0^-,v_c)\to\mathcal{B}_{C\sigma}(p_0^+)$ by 
$$\mathcal{P}:(\rho^-,\varphi^-,p^-,v_{\rm ex})\mapsto \left.S\varrho^{\gamma}\left(S,\nabla\varphi+\nabla\times{\bf W}+\left(\frac{\Lambda}{r\sin\phi}\right){\bf e}_{\theta}\right)\right\vert_{\Gamma_{\rm ex}},$$
 where $(f,S,\Lambda,\varphi,{\bf W})$ is an axisymmetric solution to the free boundary problem \eqref{free-v} associated with $(\rho^-,\varphi^-,p^-,v_{\rm ex})$.
Once Proposition \ref{free-v-Pro} is proved, there exist positive constants $\sigma\in(0,\tilde{\sigma}_{2}^{\ast}]$ and $C$ so that the mapping $\mathcal{P}$ is well-defined.
In \S\ref{S-exi-Thm}, we prove that $\mathcal{P}$ is locally invertible in a weak sense near the background solution.
Then, for a given $p_{\rm ex}$, there exists  $v_{\rm ex}$ such that $\mathcal{P}(v_{\rm ex})=p_{\rm ex}$ and 
\begin{equation}\label{v-vc}
\|v_{\rm ex}-v_c\|_{1,\alpha,\Gamma_{\rm ex}}^{(-\alpha,\partial\Gamma_{\rm ex})}\le C\sigma(\rho^-,{\bf u}^-,p^-,p_{\rm ex}).
\end{equation}
Since  the solution $(f,S,\Lambda,\varphi,{\bf W})$ of the free boundary problem \eqref{free-v} satisfies \eqref{E-re}, \eqref{bd-W}, and \eqref{boundary-HD}, 
it also satisfies \eqref{E-re} with \eqref{bd-W}-\eqref{bd-EX} and \eqref{boundary-HD}.
Moreover, by \eqref{free-v-est} and \eqref{v-vc}, the solution $(f,S,\Lambda,\varphi,{\bf W})$  satisfies the estimate \eqref{Thm-HD-est}.

Finally, we prove the uniqueness of $v_{\rm ex}$ satisfying $\mathcal{P}(v_{\rm ex})=p_{\rm ex}$ by a contraction argument. 
Then, the uniqueness of $v_{\rm ex}$ implies that the solution $(f,S,\Lambda,\varphi,{\bf W})$ of the free boundary problem \eqref{free-v} associated with $v_{\rm ex}$ is the desired unique solution. 
The details are given in \S\ref{S-uni-Thm}.


\subsection{Outline of the proof of Proposition \ref{free-v-Pro}}\label{Ss-pro}
For notational simplicity, let us set $\sigma_v$ as
\begin{equation}
\sigma_v:=\sigma(\rho^-,{\bf u}^-,p^-,v_{\rm ex})
\end{equation}
for $\sigma(\rho^-,{\bf u}^-,p^-,v_{\rm ex})$ given in \eqref{sigma-def-1}.

{\bf Step 1.} Fix $\alpha\in(\frac{1}{2},1)$ and set $\Om^+:=\Om\cap\{r>r_{\rm sh}-\frac{1}{2}\}$. 
For a positive constant $\delta_1$  to be determined later, define an iteration set $\mathcal{I}(\delta_1)$  by 
\begin{equation}
\mathcal{I}(\delta_1):=\left\{{\bf W}=\psi{\bf e}_{\theta}\in C^{2,\alpha}_{(-1-\alpha,\Gamma_{\rm w})}({\Om^+};\mathbb{R}^3)\;\middle\vert\;\begin{split}
							&\psi \mbox{ is axisymmetric},\\
							&\psi\equiv0\mbox{ on }\{\phi=0,\phi_0\},\\
							&\|{\bf W}\|_{2,\alpha,\Om^+}^{(-1-\alpha,\Gamma_{\rm w})}\le \delta_1\sigma_v
							\end{split}\right\}.
\end{equation}
For a fixed ${\bf W}_{\ast}=\psi_{\ast}{\bf e}_{\theta}\in\mathcal{I}(\delta_1)$, let us set 
${\bf t}_{\ast}:=\nabla\times{\bf W}_{\ast}+\left(\frac{\Lambda}{r\sin\phi}\right){\bf e}_{\theta}$
for notational simplicity.
We solve the following free boundary problem for $(f,\varphi,S,\Lambda)$:
\begin{equation}\label{W-free}
\begin{split}
&\left\{\begin{split}
	&\mbox{div}\left({\bf A}(\nabla\varphi,{\bf t}_{\ast})\right)=0\\
	&{\bf A}(\nabla\varphi,{\bf t}_{\ast})\cdot\nabla S=0\\
	&{\bf A}(\nabla\varphi,{\bf t}_{\ast})\cdot\nabla \Lambda=0	
	\end{split}\right.\quad\mbox{in}\quad\Om_f^+,\\
&\left\{\begin{split}
	&S=S_{\rm sh}(f'),\quad \Lambda=\Lambda^-\quad\mbox{on}\quad\mathfrak{S}_f,\\
		&\varphi=\varphi^-,\quad-\nabla\varphi\cdot{\bf n}_f=-\frac{K_s(f')}{{\bf u}^-\cdot{\bf n}_f}+{\bf t}_{\ast}\cdot{\bf n}_f\quad\mbox{on}\quad\mathfrak{S}_f,\\
	&\nabla\varphi\cdot{\bf e}_{\phi}=0\quad\mbox{on}\quad\Gamma_{\rm w}\cap\partial\Om_f^+,\\
	&\left(B_0-\frac{\lvert\nabla\varphi+{\bf t}_{\ast}\rvert^2}{2}\right)^{\frac{1}{\gamma-1}}\nabla\varphi\cdot{\bf e}_r=v_{\rm ex}\quad\mbox{on}\quad\Gamma_{\rm ex}.
	\end{split}\right.
\end{split}
\end{equation}
\begin{lemma}\label{Pro1}
For any $\alpha\in(\frac{1}{2},1)$, there exists a small constant $\sigma_3>0$ depending only on the data and $\delta_1$ 
so that if
\begin{equation*}
\sigma_v\le \sigma_3,
\end{equation*}
then there exists a unique axisymmetric solution $(f,S,\Lambda,\varphi)$ of the free boundary problem \eqref{W-free} that satisfies the estimate 
\begin{equation}\label{Pro-est}
\begin{split}
&\|f-r_{\rm sh}\|_{2,\alpha,(0,\phi_0)}^{(-1-\alpha,\{\phi=\phi_0\})}+\|(S,\frac{\Lambda}{r\sin\phi}{\bf e}_{\theta})-(S_0^+,{\bf 0})\|_{1,\alpha,\Om_f^+}^{(-\alpha,\Gamma_{\rm w})}\\
&+\|\varphi-\varphi_0^+\|_{2,\alpha,\Om_f^+}^{(-1-\alpha,\Gamma_{\rm w})}\le C\left(1+\delta_1\right)\sigma_v
\end{split}
\end{equation}
for a constant $C>0$ depending only on the data. 
Furthermore, $S$ satisfies 
\begin{equation}\label{Pro1-S-est}
\|\partial_\phi S\|_{0,\alpha,\Om_f^+}^{(1-\alpha,\Gamma_{\rm w})}\le C\left(\sigma_v+\delta_1^2\sigma_v^2\right)
\end{equation}
for a constant $C>0$ depending only on the data.
\end{lemma}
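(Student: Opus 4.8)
\textbf{Proof strategy for Lemma \ref{Pro1}.}

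The plan is to decouple the free boundary problem \eqref{W-free} into a fixed-point iteration on the shock position $f$. First I would fix a candidate shock function $f_\ast\in\mathcal{B}_{\delta\sigma_v}(r_{\rm sh})$ for a suitable constant $\delta$, which freezes the domain $\Om_{f_\ast}^+$, the normal ${\bf n}_{f_\ast}$, and hence all the data $K_s(f_\ast')$, $S_{\rm sh}(f_\ast')$ appearing on $\mathfrak{S}_{f_\ast}$. With $f_\ast$ frozen, the transport equations ${\bf A}(\nabla\varphi,{\bf t}_\ast)\cdot\nabla S=0$ and ${\bf A}(\nabla\varphi,{\bf t}_\ast)\cdot\nabla\Lambda=0$ determine $S$ and $\Lambda$ once $\varphi$ is known, by integrating along streamlines of ${\bf A}(\nabla\varphi,{\bf t}_\ast)$ starting from $\mathfrak{S}_{f_\ast}$ where $S=S_{\rm sh}(f_\ast')$ and $\Lambda=\Lambda^-$ are prescribed; the streamline map is well-defined and regular as long as $\nabla\varphi$ is a small perturbation of $\nabla\varphi_0^+$, since then ${\bf A}(\nabla\varphi,{\bf t}_\ast)\cdot{\bf e}_r>0$ and the flow enters through $\mathfrak{S}_{f_\ast}$ and exits through $\Gamma_{\rm ex}$. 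So the real content is solving the second-order equation $\mbox{div}({\bf A}(\nabla\varphi,{\bf t}_\ast))=0$ for $\varphi$ with the mixed boundary conditions: a Dirichlet-type oblique condition $-\nabla\varphi\cdot{\bf n}_{f_\ast}=-K_s(f_\ast')/({\bf u}^-\cdot{\bf n}_{f_\ast})+{\bf t}_\ast\cdot{\bf n}_{f_\ast}$ together with $\varphi=\varphi^-$ on $\mathfrak{S}_{f_\ast}$, the Neumann slip condition on $\Gamma_{{\rm w}}$, and the nonlinear exit condition $(B_0-\tfrac12|\nabla\varphi+{\bf t}_\ast|^2)^{1/(\gamma-1)}\nabla\varphi\cdot{\bf e}_r=v_{\rm ex}$ on $\Gamma_{\rm ex}$.

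Next I would set up an inner fixed-point scheme for $\varphi$: linearize $\mbox{div}({\bf A}(\nabla\varphi,{\bf t}_\ast))=0$ around $\varphi_0^+$, obtaining a uniformly elliptic equation (ellipticity being exactly the subsonic condition, which holds near the background subsonic state), with lower-order and inhomogeneous terms controlled by $\sigma_v$ and by $\|{\bf t}_\ast\|$, hence by $\delta_1\sigma_v$. The exit condition is handled by writing it as an oblique derivative condition $\nabla\varphi\cdot{\bf e}_r=\Phi(v_{\rm ex},\nabla\varphi+{\bf t}_\ast)$ and freezing the right side at the previous iterate. The over-determination on $\mathfrak{S}_{f_\ast}$ (both $\varphi=\varphi^-$ and a condition on $\nabla\varphi\cdot{\bf n}_{f_\ast}$) is precisely what will be used afterwards to \emph{determine} $f$: one of the two is imposed as the boundary condition for the elliptic problem, and the mismatch in the other defines the fixed-point map $f_\ast\mapsto f$. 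Concretely I would impose $\varphi=\varphi^-$ on $\mathfrak{S}_{f_\ast}$ (Dirichlet) together with the Neumann and exit conditions, solve for $\varphi$ in the weighted Hölder space $C^{2,\alpha}_{(-1-\alpha,\Gamma_{\rm w})}(\Om_{f_\ast}^+)$ using the elliptic estimates with corner weights near $\Gamma_{\rm w}\cap\mathfrak{S}_{f_\ast}$ and $\Gamma_{\rm w}\cap\Gamma_{\rm ex}$ from \cite{park2020transonic} (this is where the restriction $\alpha\in(\tfrac12,1)$ enters), and then read off the residual $R(f_\ast):=\big[-\nabla\varphi\cdot{\bf n}_{f_\ast}+K_s(f_\ast')/({\bf u}^-\cdot{\bf n}_{f_\ast})-{\bf t}_\ast\cdot{\bf n}_{f_\ast}\big]\big|_{\mathfrak{S}_{f_\ast}}$. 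Using the Rankine–Hugoniot structure of the background solution (the admissibility (ix) guarantees the linearized normal-derivative-jump operator is non-degenerate), $R(f_\ast)=0$ becomes a first-order ODE for $f$ of the form $f'=\mathcal{F}(\phi,f,f_\ast,\varphi,{\bf t}_\ast)$ with $f(0)$ pinned by the axisymmetry condition $f'(0)=0$; solving this ODE defines the updated shock $f=\mathcal{J}(f_\ast)$.

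Then I would run the Schauder fixed-point theorem on $\mathcal{J}:\mathcal{B}_{\delta\sigma_v}(r_{\rm sh})\to\mathcal{B}_{\delta\sigma_v}(r_{\rm sh})$: the a priori estimate \eqref{Pro-est} with constant $C(1+\delta_1)$ follows by combining the elliptic estimate for $\varphi$ (inhomogeneity of size $C(\sigma_v+\delta_1\sigma_v)$), the ODE estimate for $f$, and the streamline estimates for $S,\Lambda$; choosing $\delta$ large relative to $C$ and then $\sigma_3$ small gives invariance, and compactness of the inclusion of the $\alpha$-Hölder ball into a lower-exponent ball gives a fixed point. For uniqueness, I would estimate the difference of two solutions in a weaker norm (one derivative less) and show $\mathcal{J}$ is a contraction there once $\sigma_v$ is small, which upgrades to uniqueness in the original space by the a priori bound. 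Finally, the extra estimate \eqref{Pro1-S-est} on $\partial_\phi S$ comes from differentiating the streamline representation of $S$: $\partial_\phi S$ picks up $\partial_\phi S_{\rm sh}(f_\ast')$ on $\mathfrak{S}_{f_\ast}$, which is $O(\sigma_v)$ since $S_{\rm sh}$ depends on $f'$ and on $(\rho^-,{\bf u}^-,p^-)$ both of which are $O(\sigma_v)$-perturbations (the background $S_0^+$ being constant on each sphere kills the leading term), plus transport terms that are quadratically small, of size $O(\delta_1^2\sigma_v^2)$, coming from the $O(\delta_1\sigma_v)$ tilt of the streamlines times the $O(\delta_1\sigma_v)$ variation they transport; tracking the weights $(1-\alpha,\Gamma_{\rm w})$ through this computation gives the stated bound.

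\textbf{Main obstacle.} The hard part will be solving the mixed boundary value problem for $\varphi$ with the genuinely nonlinear exit condition and obtaining estimates in the corner-weighted Hölder norm: the corners where the shock, the wall, and the exit meet force the use of the weighted spaces, and one must verify that the oblique exit condition is compatible with ellipticity (the obliqueness angle stays bounded away from tangency because $\nabla\varphi_0^+\cdot{\bf e}_r>0$) and that the compatibility/Lopatinskii conditions at the corners hold for the chosen exponent $\alpha\in(\tfrac12,1)$ — it is exactly this that both makes the improvement in $\alpha$ possible and makes the estimate delicate. Closing the loop between the over-determined elliptic data on $\mathfrak{S}_{f_\ast}$ and the ODE for $f$, with all constants uniform in $\delta_1$, is the second delicate point.
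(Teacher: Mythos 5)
Your general decomposition (freeze the shock $f_\ast$, solve the elliptic problem for $\varphi$, transport $S,\Lambda$ along streamlines, iterate) is in the same spirit as the paper, but there are two genuine gaps and one important deviation.

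\textbf{The shock-determination step is wrong.} You impose $\varphi=\varphi^-$ on $\mathfrak{S}_{f_\ast}$, read off the residual $R(f_\ast)$ in the normal-derivative Rankine--Hugoniot condition, and claim $R(f)=0$ becomes a first-order ODE $f'=\mathcal{F}(\phi,f,\ldots)$ ``with $f(0)$ pinned by the axisymmetry condition $f'(0)=0$.'' This cannot work. First, a condition on $f'(0)$ does not determine $f(0)$, so there is no initial datum for the ODE. Second, and more fundamentally, the $\tilde f'$-coefficient in the linearization of $R$ about the background vanishes: the $f'$-dependence of $R$ only enters through ${\bf n}_f$ and $K_s(f')$, and $\partial_{f'}{\bf n}_f$ at $f\equiv r_{\rm sh}$ points in the ${\bf e}_\phi$-direction while both $\nabla\varphi_0^+\cdot{\bf e}_\phi$ and ${\bf u}^-\cdot{\bf e}_\phi$ vanish at the background. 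So $R=0$ is to linear order an \emph{algebraic} relation $\big(\partial_r^2\varphi_0^+-\partial_r(K_0/\partial_r\varphi_0^-)\big)(r_{\rm sh})\,\tilde f+\cdots=0$, not an ODE, and its invertibility would require the quantity $\mu_0=\frac{d}{dr}\!\left(\frac{K_0}{\partial_r\varphi_0^-}-\partial_r\varphi_0^+\right)(r_{\rm sh})\big/\partial_r(\varphi_0^--\varphi_0^+)(r_{\rm sh})$ to be nonzero, which is not part of the admissibility hypothesis (ix) and is not what the paper uses. The paper instead derives, via the $\mu_f$-trick in Lemma~\ref{Lem-S-bd-re}, a single \emph{Robin-type} condition $\nabla\chi\cdot{\bf b}-\mu_f\chi=\tilde g_1$ on $\mathfrak{S}_f$ by combining the Dirichlet and oblique RH conditions; it imposes this Robin condition in the elliptic problem and determines $f$ by an implicit function theorem (Lemma~\ref{Lemm-Impli}) applied to the Dirichlet residual $\mathfrak{J}(f)=(\varphi^--\varphi)(f(\phi),\phi)$. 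The key payoff is \eqref{Df-J}: $D_f\mathfrak{J}$ is multiplication by $\partial_r(\varphi_0^--\varphi_0^+)(r_{\rm sh})$, which is strictly positive by admissibility, so invertibility is automatic and no extra condition on $\mu_0$ is needed. Your choice of which RH condition to impose and which to use as the residual is not symmetric to the paper's, and yours is the degenerate one.

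\textbf{The $\Lambda$ circularity is not addressed.} You treat $\Lambda$ as determined a posteriori by transport, once $\varphi$ is known. But the transport speed is ${\bf A}(\nabla\varphi,{\bf t}_\ast)$ with ${\bf t}_\ast=\nabla\times{\bf W}_\ast+\big(\Lambda/(r\sin\phi)\big){\bf e}_\theta$, so the magnitude factor $\big(B_0-\tfrac12|\nabla\varphi+{\bf t}_\ast|^2\big)^{1/(\gamma-1)}$ already contains $\Lambda$, and the elliptic coefficients for $\varphi$ depend on $\Lambda$ as well. The paper breaks this loop explicitly with the outer iteration set $\mathcal{I}(\delta_2)$ in \eqref{ite-set-2}, freezing $\Lambda_\ast$, solving for $(f,\varphi)$, transporting a new $\Lambda$, and closing with Schauder. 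This is a quadratically small effect ($O(\sigma_v^2)$), so your sketch could be repaired, but as written it skips an iteration layer that the paper's estimate \eqref{Pro-est} and the constant bookkeeping actually depend on.

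\textbf{Minor.} You use Schauder for the free boundary $f$; the paper uses the (Fr\'echet) implicit function theorem (Lemma~\ref{Lemm-Impli}), which simultaneously delivers local uniqueness of $f$, whereas Schauder forces a separate contraction argument — the paper does eventually do a contraction (Lemma~\ref{Uni-ss}), but for the pair $(\Lambda,S)$ and the full solution, not for $f$ alone. Your streamline argument for $S,\Lambda$ and your accounting of $\|\partial_\phi S\|$ as $O(\sigma_v)+O(\delta_1^2\sigma_v^2)$ agree with the paper (cf.\ Lemma~\ref{Lem-t} and the exactly-conserved stream function $w(\tilde r,\phi)=\int_0^\phi\mathcal{A}_{\tilde r}\,dz$).
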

The proof of Lemma \ref{Pro1} is given in the next section.



{\bf Step 2.} Let $(f,S_{\sharp},\Lambda_{\sharp},\varphi_{\sharp})$ be the solution obtained in  Lemma \ref{Pro1}, and let us  set 
\begin{equation*}
{\bf t}_{\sharp}:=\nabla\times{\bf W}_{\ast}+\left(\frac{\Lambda_{\sharp}}{r\sin\phi}\right){\bf e}_{\theta}.
\end{equation*}
We solve the following linear boundary value problem for ${\bf W}$:
\begin{equation}\label{lin-W}
\left\{\begin{split}
&-\Delta{\bf W}=G_{\sharp}{\bf e}_{\theta}\quad\mbox{in}\quad\Om_f^+,\\
	&-\nabla{\bf W}\cdot{\bf n}_{f}+\mu(f,f'){\bf W}=\mathcal{A}(f,f'){\bf e}_{\theta}\quad\mbox{on}\quad\mathfrak{S}_f,\\
	&{\bf W}={\bf 0}\quad\mbox{on}\quad\partial\Om_f^+\backslash\mathfrak{S}_f
\end{split}\right.
\end{equation}
with
\begin{equation}\label{def-G-sharp}
G_{\sharp}:=G\left(r,\phi,S_{\sharp},\Lambda_{\sharp},\partial_{\phi}S_{\sharp},\partial_{\phi}\Lambda_{\sharp},\nabla\varphi_{\sharp}+{\bf t}_{\sharp}\right)
\end{equation}
for $G$, $\mu$, and $\mathcal{A}$ given in \eqref{def-H-G} and \eqref{def-mu}.
\begin{lemma}\label{Pro2}
For any $\alpha\in(\frac{1}{2},1)$, there exists a small constant $\sigma_4>0$ depending only on the data and $\delta_1$ 
so that if
\begin{equation*}
\sigma_v
\le \sigma_4,
\end{equation*}
then there exists a unique solution ${\bf W}$ of the linear boundary value problem \eqref{lin-W} that satisfies the estimate 
\begin{equation}\label{W-est}
\begin{split}
\|{\bf W}\|_{2,\alpha,\Om_f^+}^{(-1-\alpha,\Gamma_{\rm w})}\le&\, C\left(\|G_{\sharp}{\bf e}_{\theta}\|_{0,\alpha,\Om_f^+}^{(1-\alpha,\Gamma_{\rm w})}+\|\mathcal{A}(f,f'){\bf e}_{\theta}\|_{1,\alpha,\mathfrak{S}_f}^{(-\alpha,\partial\mathfrak{S}_f)}\right)\\
\le&\, C\left(\sigma_v+\delta_1^2\sigma_v^2\right)
\end{split}
\end{equation}
for a constant $C>0$ depending only on the data. 
Furthermore, ${\bf W}$ has of the form 
\begin{equation*}
{\bf W}=\psi(x,r){\bf e}_{\theta}
\end{equation*}
for an axisymmetric function $\psi$ satisfying 
\begin{equation}\label{psi-eqn}
\left\{\begin{split}
&-\left(\Delta_{\bf x}-\frac{1}{r^2\sin^2\phi}\right)\psi=G(r,\phi,S_{\sharp},\Lambda_{\sharp},\partial_{\phi}S_{\sharp},\partial_{\phi}\Lambda_{\sharp},\nabla\varphi_{\sharp}+{\bf t}_{\sharp})\,\,\mbox{in }\Om_f^+,\\
&-\nabla \psi\cdot{\bf n}_f+\mu(f,f')\psi=\mathcal{A}(f,f')\,\,\mbox{on }\mathfrak{S}_f,\\
&\psi=0\,\,\mbox{on }\partial\Om_f^+\backslash\mathfrak{S}_f.
\end{split}\right.
\end{equation}
\end{lemma}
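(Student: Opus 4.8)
The plan is to reduce the vector problem \eqref{lin-W} to the scalar problem \eqref{psi-eqn} for $\psi$, to solve the scalar problem by the linear theory for singular elliptic operators developed in \cite{bae20183, park2020transonic}, and then to translate the weighted Schauder estimate back to ${\bf W}$, closing it with the output of Lemma \ref{Pro1}. First I would record the identity $-\Delta(\psi{\bf e}_{\theta})=\left(-\Delta_{\bf x}\psi+\frac{\psi}{r^2\sin^2\phi}\right){\bf e}_{\theta}$ for axisymmetric $\psi$, together with the reductions $-\nabla{\bf W}\cdot{\bf n}_f+\mu(f,f'){\bf W}=\mathcal{A}(f,f'){\bf e}_{\theta}\ \Longleftrightarrow\ -\nabla\psi\cdot{\bf n}_f+\mu(f,f')\psi=\mathcal{A}(f,f')$ on $\mathfrak{S}_f$ and ${\bf W}={\bf 0}\ \Longleftrightarrow\ \psi=0$ on $\partial\Om_f^+\backslash\mathfrak{S}_f$. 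Since $G_{\sharp}{\bf e}_{\theta}$, $\mathcal{A}(f,f'){\bf e}_{\theta}$ and all boundary data point in the ${\bf e}_{\theta}$ direction, it suffices to prove that \eqref{psi-eqn} has a unique axisymmetric solution $\psi$ in the weighted space and to estimate it; then ${\bf W}:=\psi{\bf e}_{\theta}$ is the unique solution of \eqref{lin-W} (uniqueness for the vector problem, hence the asserted form, following from uniqueness for \eqref{psi-eqn}), with $\psi\equiv0$ on $\{\phi=0\}$ forced by well-definedness of $\psi{\bf e}_{\theta}$ near the axis and $\psi\equiv0$ on $\{\phi=\phi_0\}$ being part of the Dirichlet data, consistently with the iteration set $\mathcal{I}(\delta_1)$.

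Next I would flatten the free boundary: using $f\in\mathcal{B}_{C\sigma_v}(r_{\rm sh})$ from Lemma \ref{Pro1} (which also provides the compatibility at $\phi=0$ that keeps $\mu(f,f')$ bounded up to the axis), a change of variables straightening $\mathfrak{S}_f$ to $\{r=r_{\rm sh}\}$ transforms \eqref{psi-eqn} into a boundary value problem on a fixed domain of the same structure: an operator uniformly elliptic away from the axis, carrying the singular zeroth-order coefficient $\frac{1}{r^2\sin^2\phi}$ and differing from the background operator by terms whose weighted norms are $O(\sigma_v)$; on $\{r=r_{\rm sh}\}$ the oblique condition has coefficient $\mu\approx-r_{\rm sh}^{-2}<0$, which is the correct sign for solvability, and Dirichlet data elsewhere. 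I would then invoke the linear theory of \cite{bae20183, park2020transonic} for operators of this type — a priori weighted estimates from interior and boundary Schauder estimates rescaled near the corners $\overline{\mathfrak{S}_f}\cap\Gamma_{\rm w}$, $\overline{\Gamma_{\rm ex}}\cap\Gamma_{\rm w}$ and near the axis, combined with barrier functions, plus existence via the continuity method connecting to the background operator — to obtain a unique $\psi$ with
\begin{equation*}
\|\psi\|_{2,\alpha,\Om_f^+}^{(-1-\alpha,\Gamma_{\rm w})}\le C\left(\|G_{\sharp}\|_{0,\alpha,\Om_f^+}^{(1-\alpha,\Gamma_{\rm w})}+\|\mathcal{A}(f,f')\|_{1,\alpha,\mathfrak{S}_f}^{(-\alpha,\partial\mathfrak{S}_f)}\right),
\end{equation*}
the smallness $\sigma_v\le\sigma_4$ being used precisely so the perturbed operator stays in the admissible range.

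Finally I would close \eqref{W-est}. By \eqref{def-H-G}, $G$ is a smooth function of its arguments on the relevant compact set (here $({\bf q}\cdot{\bf e}_r)=u_r$ is bounded below, and the apparent singularity of $\frac{\eta_2\eta_4}{r^2\sin^2\phi}$ at $\phi=0$ is removable because $\Lambda$ vanishes to second order there); hence, using the bounds \eqref{Pro-est} for $(\varphi_{\sharp},\Lambda_{\sharp})$ and $\nabla\times{\bf W}_{\ast}$, the bound $\|{\bf W}_{\ast}\|\le\delta_1\sigma_v$, and crucially \eqref{Pro1-S-est} to control $\partial_{\phi}S_{\sharp}$, one gets $\|G_{\sharp}\|_{0,\alpha,\Om_f^+}^{(1-\alpha,\Gamma_{\rm w})}\le C(\sigma_v+\delta_1^2\sigma_v^2)$. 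Likewise $\mathcal{A}(f,f')$ from \eqref{def-mu} is controlled by $\|f-r_{\rm sh}\|$ and the upstream data $\psi^-$, both $O(\sigma_v)$, so $\|\mathcal{A}(f,f'){\bf e}_{\theta}\|_{1,\alpha,\mathfrak{S}_f}^{(-\alpha,\partial\mathfrak{S}_f)}\le C\sigma_v$. Substituting into the elliptic estimate yields \eqref{W-est}, and axisymmetry of $\psi$ follows by uniqueness applied to rotations of the solution.

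The hard part will be the solvability and the \emph{uniform} weighted estimate for the scalar operator $-\left(\Delta_{\bf x}-\frac{1}{r^2\sin^2\phi}\right)$: its zeroth-order coefficient is singular on the axis $\{\phi=0\}$, this must coexist with the corner singularity at $\overline{\mathfrak{S}_f}\cap\Gamma_{\rm w}$ around which the weighted space $C^{2,\alpha}_{(-1-\alpha,\Gamma_{\rm w})}$ is built, and the estimate constant must be independent of the particular shock $f$ and of $\sigma_v$ in the admissible range. Establishing this uniformity — rather than any individual calculation above — is the delicate point, and it is exactly what the techniques imported from \cite{bae20183, park2020transonic} are designed to handle.
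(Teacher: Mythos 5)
Your approach inverts the paper's and in doing so acquires a genuine gap. The paper solves the \emph{vector} problem \eqref{lin-W} directly: since $-\Delta{\bf W}=G_{\sharp}{\bf e}_{\theta}$ has no singular coefficient, Lax--Milgram with the sign condition $\mu(f,f')<0$ gives a unique $H^1$ solution, and a Campanato-type argument plus rescaled Schauder estimates give the weighted $C^{2,\alpha}$ bound. Only afterwards does the paper show that this unique vector solution is of the form $\psi{\bf e}_{\theta}$, using the rotational averaging $W_k^{(n)}(r,\phi,\theta)=\frac{1}{2^n}\sum_{j}W_k(r,\phi,\theta+\tfrac{2\pi j}{2^n})$: each average solves the same problem, so by uniqueness it equals ${\bf W}$, whence ${\bf W}$ is $\theta$-independent, the system \eqref{W-eq-general} decouples into \eqref{split-eq}, $(\tilde W_r,\tilde W_\phi)$ solve a homogeneous subproblem and vanish, and the $\theta$-component satisfies \eqref{psi-eqn}. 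You instead propose to solve the scalar problem \eqref{psi-eqn} first and then set ${\bf W}:=\psi{\bf e}_{\theta}$. This order is not innocuous, for two reasons.

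First, you assert that uniqueness of the vector problem ``follow[s] from uniqueness for \eqref{psi-eqn}.'' It does not: if ${\bf W}'$ were a second solution of \eqref{lin-W}, there is no a priori reason it should be of the form $\tilde\psi{\bf e}_{\theta}$, so scalar uniqueness says nothing about ${\bf W}-{\bf W}'$. Uniqueness of the vector problem needs its own (easy) argument, for instance the Lax--Milgram coercivity that the paper uses, and without it you cannot conclude the ``furthermore'' part either -- that is precisely what the averaging step is for. Second, you push the singular coefficient $\frac{1}{r^2\sin^2\phi}$ to the foreground and then cite \cite{bae20183, park2020transonic} as providing a ``linear theory for singular elliptic operators'' that would deliver the weighted $C^{2,\alpha}$ estimate for \eqref{psi-eqn} on the nose. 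But the cited technique in \cite{bae20183} is exactly the vector-first-then-average argument reproduced in the paper; it is designed to \emph{avoid} ever estimating $-(\Delta_{\bf x}-\frac{1}{r^2\sin^2\phi})$ directly in a weighted Hölder space anchored only on $\Gamma_{\rm w}$. If one truly attacks \eqref{psi-eqn} head-on, the zeroth-order term blows up like $\phi^{-2}$ on the axis $\{\phi=0\}$, which lies outside the weight set $\Gamma_{\rm w}$; obtaining Hölder continuity of $\psi/(r^2\sin^2\phi)$ up to the axis requires the quadratic vanishing $\psi(r,0)=\partial_\phi\psi(r,0)=\partial_{\phi\phi}\psi(r,0)=0$, which the paper \emph{derives} at the end of its proof from equation (ii) of \eqref{split-eq} rather than assumes. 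So the hard part you flag is real, but it is not resolved by the references you invoke, and the natural resolution is precisely to reverse the order of your argument and follow the paper's route: vector existence and uniqueness with clean coefficients first, axisymmetry and the $\psi{\bf e}_{\theta}$-form by averaging second, and the boundary behavior at the axis as a byproduct.
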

The proof of Lemma \ref{Pro2} can be obtained by  slightly adjusting \cite[Proof of Theorem 3.1]{park2020transonic}. So we skip it.
In order to set an iteration map for ${\bf W}$, we need to extend the solution ${\bf W}$ into $\left[C^{2,\alpha}_{(-1-\alpha,\Gamma_{\rm w})}({\Om^+})\right]^3$. 
For that purpose, we first define a transformation and a corresponding Jacobian matrix as follows:
\begin{definition}
For $f,g\in\mathfrak{B}_{\frac{1}{4}}(r_{\rm sh})$, define a transformation $\mathfrak{T}_{f,g}:\Om_f^+\to\Om_g^+$ by 
\begin{equation}\label{no-trans}
\begin{split}
\mathfrak{T}_{f,g}:
&\,{\bf x}\mapsto\left(\frac{r_{\rm ex}-g}{r_{\rm ex}-f}(\lvert{\bf x}\rvert-r_{\rm ex})+r_{\rm ex}\right)\frac{\bf x}{\lvert{\bf x}\rvert},\\
&(r,\phi,\theta)\mapsto\left(\frac{r_{\rm ex}-g(\phi)}{r_{\rm ex}-f(\phi)}(r-r_{\rm ex})+r_{\rm ex},\phi,\theta\right).
\end{split}
\end{equation}
Since $\lvert r_{\rm ex}-f\rvert>0$ and $\lvert r_{\rm ex}-g\rvert>0$, $\mathfrak{T}_{f,g}$ is invertible and $\mathfrak{T}_{f,g}^{-1}=\mathfrak{T}_{g,f}$.
For a transformation $\mathfrak{T}_{f,g}:{\bf x}\mapsto{\bf y}$ defined by \eqref{no-trans}, define a Jacobian matrix $\mathbf{J}_{f,g}$ by
\begin{equation}\label{def-jaco}
{\bf J}_{f,g}:=[\partial_{x_i}y_j]_{i,j=1}^3.
\end{equation}
\end{definition}

Define a function ${\bf W}_{\rm e}$ by 
\begin{equation}\label{def-We}
{\bf W}_{\rm e}({\bf y}):=\left\{\begin{split}
	&{\bf W}\circ\mathfrak{T}_{r_{\rm sh},f}({\bf y})\mbox{ for }{\bf y}\in\overline{\mathfrak{T}_{f,r_{\rm sh}}(\Om^+)}\cap\{\tilde{r}>r_{\rm sh}\},\\
	&\sum_{i=1}^3c_i\left({\bf W}\circ\mathfrak{T}_{r_{\rm sh},f}\right)\left(-\frac{\tilde{r}}{i},\phi,\theta\right)\mbox{ for }{\bf y}\in\overline{\mathfrak{T}_{f,r_{\rm sh}}(\Om^+)}\backslash\{\tilde{r}>r_{\rm sh}\},
	\end{split}
\right.
\end{equation}
where $(\tilde{r},\phi,\theta)$ denote the spherical coordinates of ${\bf y}=(y_1,y_2,y_3)\in\overline{\mathfrak{T}_{f,r_{\rm sh}}(\Om^+)}$, that is,
$(y_1,y_2,y_3)=(\tilde{r}\sin\phi\cos\theta,\tilde{r}\sin\phi\sin\theta,\tilde{r}\cos\theta).$
In \eqref{def-We}, $(c_1,c_2,c_3)$ is the solution of the system 
$\sum_{i=1}^3 c_i\left(-\frac{1}{i}\right)^m=1,$ $m=0,1,2.$
With such a  function ${\bf W}_e$, 
we define a function ${\bf W}_{\rm ext}:\Om^+\to\mathbb{R}^3$ by 
\begin{equation}\label{W-ext}
{\bf W}_{\rm ext}({\bf x}):={\bf W}_{\rm e}\circ\mathfrak{T}_{f,r_{\rm sh}}({\bf x}).
\end{equation}
Obviously, the function ${\bf W}_{\rm ext}$  satisfies the estimate 
\begin{equation*}\label{Wext-est}
\|{\bf W}_{\rm ext}\|_{2,\alpha,\Om^+}^{(-1-\alpha,\Gamma_{\rm w})}\le C_{\flat}\left(\sigma_v+\delta_1^2\sigma_v^2\right)
\end{equation*}
for a constant $C_{\flat}>0$ depending only on the data. 

 Now we define an iteration mapping $\mathcal{J}_v:\mathcal{I}(\delta_1)\to \left[C^{2,\alpha}_{(-1-\alpha,\Gamma_{\rm w})}({\Om^+})\right]^3$ by 
 \begin{equation*}
 \mathcal{J}_v:{\bf W}_{\ast}\mapsto {\bf W}_{\rm ext}.
 \end{equation*}
Then we choose $\delta_1$ and $\sigma_2^{\ast}$ satisfying
 \begin{equation}\label{del1-ch}
 \delta_1=2C_{\flat}\quad\mbox{and}\quad\sigma_2^{\ast}\le\min\left\{\sigma_3,\sigma_4,\frac{1}{2C_{\flat}\delta_1},\frac{1}{4C(1+\delta_1)}\right\}
 \end{equation}
 for $C$ in \eqref{Pro-est}
  so that the mapping $\mathcal{J}_v$ maps $\mathcal{I}(\delta_1)$ into itself. 
The iteration set $\mathcal{I}(\delta_1)$ is a convex and compact subset of $\left[C^{2,\alpha/2}_{(-1-\alpha/2,\Gamma_{\rm w})}({\Om^+})\right]^3$, and  $\mathcal{J}_v$ is continuous in $\left[C^{2,\alpha/2}_{(-1-\alpha/2,\Gamma_{\rm w})}({\Om^+})\right]^3$.
Hence, by the Schauder fixed point theorem, $\mathcal{J}_v$ has a fixed point in $\mathcal{I}(\delta_1)$, say ${\bf W}_{\natural}$.

According to Lemma \ref{Pro1}, the free boundary value problem \eqref{W-free} associated with ${\bf W}_{\natural}$ has a unique solution  $(f_{\natural},S_{\natural},\Lambda_{\natural},\varphi_{\natural})$ satisfying \eqref{Pro-est}-\eqref{Pro1-S-est}.
Then, obviously, $(f_{\natural},S_{\natural},\Lambda_{\natural},\varphi_{\natural},{\bf W}_{\natural})$ becomes a solution to the  free boundary problem \eqref{free-v} that satisfies the estimate \eqref{free-v-est}.
Finally, we prove the uniqueness of a solution by using a contraction argument  in \S\ref{Section-Pro}.


\section{Proof of Lemma \ref{Pro1}}\label{Sec-Pro}
\subsection{Proof of Lemma \ref{Pro1}}
Fix ${\bf W}_{\ast}\in\mathcal{I}(\delta_1)$ and set 
${\bf t}_{\ast}:=\nabla\times{\bf W}_{\ast}+\left(\frac{\Lambda}{r\sin\phi}\right){\bf e}_{\theta}.$ 
In this subsection, we solve the following free boundary problem for $(f,\varphi,S,\Lambda)$:
\begingroup
\begin{align*}\allowdisplaybreaks
\begin{aligned}
&\left\{\begin{aligned}
	&\mbox{div}\left({\bf A}(\nabla\varphi,{\bf t}_{\ast})\right)=0\quad\mbox{in}\quad\Om_f^+,\\
	&\varphi=\varphi^-,\quad-\nabla\varphi\cdot{\bf n}_f=-\frac{K_s(f')}{{\bf u}^-\cdot{\bf n}_f}+{\bf t}_{\ast}\cdot{\bf n}_f\quad\mbox{on}\quad\mathfrak{S}_f,\\
	&\nabla\varphi\cdot{\bf e}_{\phi}=0\quad\mbox{on}\quad\Gamma_{\rm w}\cap\partial\Om_f^+,\\
	&\left(B_0-\frac{\lvert\nabla\varphi+{\bf t}_{\ast}\rvert^2}{2}\right)^{\frac{1}{\gamma-1}}\nabla\varphi\cdot{\bf e}_r=v_{\rm ex}\quad\mbox{on}\quad\Gamma_{\rm ex},
	\end{aligned}\right.\\
&\left\{\begin{aligned}
	&\begin{aligned}
	&{\bf A}(\nabla\varphi,{\bf t}_{\ast})\cdot\nabla S=0,\quad
	{\bf A}(\nabla\varphi,{\bf t}_{\ast})\cdot\nabla \Lambda=0
	\end{aligned}\quad\mbox{in}\quad\Om_f^+,\\
	&S=S_{\rm sh}(f'),\quad \Lambda=\Lambda^-\quad\mbox{on}\quad\mathfrak{S}_f
	\end{aligned}\right.
\end{aligned}
\end{align*}
\endgroup
for ${\bf A}$, $K_s(f')$, $v_{\rm ex}$, and $S_{\rm sh}(f')$ given in \eqref{def-H-G}, \eqref{def-KS}, \eqref{v-c-def}, and \eqref{def-A}, respectively.

For a positive constant $\delta_2$ to be determined later, define an iteration set $\mathcal{I}(\delta_2)$  by
\begin{equation}\label{ite-set-2}
\mathcal{I}(\delta_2):=\left\{\Lambda\in C^{1,\alpha}_{(-\alpha,\Gamma_{\rm w})}({\Om^+})\,\middle\vert\,
						\begin{split}
						&\Lambda\mbox{ is axisymmetric, }\\
						&\Lambda\equiv0\mbox{ on }\Om^+\cap\{\phi=0\},\\
						&\left\|\frac{\Lambda}{\sin\phi}\right\|_{1,\alpha,\mathcal{R}^+}^{(-\alpha,\{\phi=\phi_0\})}
						\le \delta_2\sigma_v
						\end{split}\right\},
\end{equation}
where $\mathcal{R}^+$ is a two-dimensional rectangular domain defined by $$\mathcal{R}^+:=\left\{({r},\phi)\in\mathbb{R}^2:\, r_{\rm sh}-\frac{1}{2}<r<r_{\rm ex},0<\phi<\phi_0\right\}.$$
\begin{remark}
If $\Lambda$ satisfies the estimate in \eqref{ite-set-2}, then one can directly check that $\Lambda$ satisfies
\begin{equation*}
\left\|\left(\frac{\Lambda}{r\sin\phi}\right){\bf e}_{\theta}\right\|_{1,\alpha,\Om^+}^{(-\alpha,\Gamma_{\rm w})}\le C\delta_2\sigma_v\mbox{ and }
\left\|\left(\frac{\Lambda\partial_{\phi}\Lambda}{\sin^2\phi}\right){\bf e}_{\theta}\right\|_{0,\alpha,\Om^+}^{(1-\alpha,\Gamma_{\rm w})}\le C(\delta_2\sigma_v)^2.
\end{equation*}
\end{remark}
For a fixed $\Lambda_{\ast}\in\mathcal{I}(\delta_2)$, let us set  
${\bf t}_{\star}:=\nabla\times{\bf W}_{\ast}+\left(\frac{\Lambda_{\ast}}{r\sin\phi}\right){\bf e}_{\theta}.$ 
We first solve the following free boundary problem for $(f,\varphi)$:
\begin{equation}\label{free-var}
\left\{\begin{split}
	&\mbox{div}\left({\bf A}(\nabla\varphi,{\bf t}_{\star})\right)=0\quad\mbox{in}\quad\Om_f^+,\\
	&\varphi=\varphi^-,\quad-\nabla\varphi\cdot{\bf n}_f=-\frac{K_s(f')}{{\bf u}^-\cdot{\bf n}_f}+{\bf t}_{\star}\cdot{\bf n}_f\quad\mbox{on}\quad\mathfrak{S}_f,\\
	&\nabla\varphi\cdot{\bf e}_{\phi}=0\quad\mbox{on}\quad\Gamma_{\rm w}\cap\partial\Om_f^+,\\
	&\left(B_0-\frac{\lvert \nabla\varphi+{\bf t}_{\star}\rvert^2}{2}\right)^{\frac{1}{\gamma-1}}\nabla\varphi\cdot{\bf e}_r=v_{\rm ex}\quad\mbox{on}\quad\Gamma_{\rm ex}.
	\end{split}\right.
\end{equation}
\begin{lemma}\label{Lem-f-var}
For any $\alpha\in(\frac{1}{2},1)$, there exists a small positive constant $\sigma_5$ depending only on the data and $(\delta_1,\delta_2)$ so that if 
$$\sigma_v\le\sigma_5,$$
then the free boundary problem \eqref{free-var} has a unique axisymmetric solution $(f,\varphi)$ that satisfies 
\begin{equation}\label{Lem-f-est}
\|f-r_{\rm sh}\|_{2,\alpha,(0,\phi_0)}^{(-1-\alpha,\{\phi=\phi_0\})}+\|\varphi-\varphi_0^+\|_{2,\alpha,\Om_f^+}^{(-1-\alpha,\Gamma_{\rm w})}
\le C\left(1+\delta_1+\delta_2\right)\sigma_v
\end{equation}
for a constant $C>0$ depending only on the data.
\end{lemma}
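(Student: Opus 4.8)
The plan is to solve the free boundary problem \eqref{free-var} by the standard fixed‑point scheme for transonic shocks in divergent nozzles, treating the shock position $f$ and the potential $\varphi$ as coupled unknowns and absorbing the vector field ${\bf t}_{\star}$ as an inhomogeneity of size $O((\delta_1+\delta_2)\sigma_v)$. First I would fix a shock candidate $f\in\mathcal{B}_{M\sigma_v}(r_{\rm sh})$ (with $M$ to be chosen), flatten the domain $\Om_f^+$ onto the fixed domain $\Om_{r_{\rm sh}}^+$ via the transformation $\mathfrak{T}_{f,r_{\rm sh}}$ from \eqref{no-trans}, and rewrite the equation $\mbox{div}\,{\bf A}(\nabla\varphi,{\bf t}_{\star})=0$ together with its boundary conditions on the flattened domain. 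Linearizing ${\bf A}$ about $(\nabla\varphi_0^+,{\bf 0})$, the principal part is the uniformly elliptic operator obtained from the subsonic background (ellipticity is exactly the subsonic condition $|{\bf u}|<c$), with an oblique-derivative condition on the (flattened) shock coming from $-\nabla\varphi\cdot{\bf n}_f = -\frac{K_s(f')}{{\bf u}^-\cdot{\bf n}_f}+{\bf t}_{\star}\cdot{\bf n}_f$, a homogeneous Neumann condition $\nabla\varphi\cdot{\bf e}_\phi=0$ on $\Gamma_{\rm w}$, and a nonlinear (but, after linearization, oblique) condition on $\Gamma_{\rm ex}$ coming from the prescribed $v_{\rm ex}$; the Dirichlet datum $\varphi=\varphi^-$ on $\mathfrak S_f$ is used to recover $f$ in the next step. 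I would quote the linear elliptic theory in the weighted Hölder spaces $C^{2,\alpha}_{(-1-\alpha,\Gamma_{\rm w})}$ developed in \cite{bae2011transonic,bae20183,park2020transonic} (which handles the corner singularities where $\Gamma_{\rm w}$ meets $\mathfrak S_f$ and $\Gamma_{\rm ex}$, forcing $\alpha\in(\tfrac12,1)$) to get a unique $\varphi=\varphi[f]$ with
\[
\|\varphi-\varphi_0^+\|_{2,\alpha,\Om_{r_{\rm sh}}^+}^{(-1-\alpha,\Gamma_{\rm w})}\le C\big(\sigma_v+(\delta_1+\delta_2)\sigma_v+\|f-r_{\rm sh}\|_{2,\alpha,(0,\phi_0)}^{(-1-\alpha,\{\phi=\phi_0\})}\big).
\]

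Next I would set up the \emph{free-boundary map}: given $f$ and the associated $\varphi[f]$, define an updated shock location $\tilde f$ by solving an ODE for $\tilde f$ obtained from imposing the remaining Rankine–Hugoniot relation. Concretely, one of the two conditions on $\mathfrak S_f$ (the Dirichlet condition $\varphi=\varphi^-$ together with the mass-flux/oblique condition) is used to \emph{define} $\varphi[f]$, and the other becomes the equation for $\tilde f$: differentiating $\varphi[f]=\varphi^-$ along the shock and combining with the normal-derivative condition yields a first-order ODE of the form $\tilde f'(\phi)=\mathcal{G}(\phi,\tilde f,\tilde f',\varphi[f],{\bf t}_{\star},\rho^-,{\bf u}^-,p^-)$ with the boundary data $\tilde f'(\phi_0)=0$ (compatibility from the wall condition) determining $\tilde f$ up to the constant $r_{\rm sh}$, which is pinned by the background-solution matching (the monotonicity property (viii) guarantees the solvability condition has a unique root). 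I would then show $f\mapsto\tilde f$ maps $\mathcal{B}_{M\sigma_v}(r_{\rm sh})$ into itself for $M=C(1+\delta_1+\delta_2)$ and $\sigma_v$ small, using the estimate on $\varphi[f]$ above and the weighted-Hölder ODE estimate $\|\tilde f-r_{\rm sh}\|_{2,\alpha,(0,\phi_0)}^{(-1-\alpha,\{\phi=\phi_0\})}\le C(\sigma_v+(\delta_1+\delta_2)\sigma_v)$, then iterate/contract to produce the fixed point $f$; feeding this $f$ back gives $\varphi$, and the estimate \eqref{Lem-f-est} follows by collecting the bounds. Uniqueness follows from the same argument run for the difference of two solutions: the linear elliptic estimate plus the ODE estimate give a contraction in a lower-order norm (e.g.\ $C^{1,\alpha/2}$) for $\sigma_v$ small, so the solution in the prescribed ball is unique.

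The main obstacle, I expect, is the \emph{coupled} nature of the shock ODE and the elliptic problem together with the corner singularity at $\Gamma_{\rm w}\cap\mathfrak S_f$: the normal-derivative condition on $\mathfrak S_f$ involves ${\bf t}_{\star}\cdot{\bf n}_f$ and $K_s(f')$, both depending on $f'$, so the ODE for $\tilde f$ is genuinely second order and quasilinear, and one must verify that its coefficient structure (in particular the sign coming from the admissibility condition (ix), $\nabla\varphi_0^-\cdot{\bf n}>\nabla\varphi_0^+\cdot{\bf n}>0$) makes it solvable with the claimed weighted regularity; simultaneously one must check the oblique-derivative boundary conditions satisfy the Lopatinskii/obliqueness condition uniformly and that the compatibility conditions at the corners hold so that the solution lies in $C^{2,\alpha}_{(-1-\alpha,\cdot)}$ rather than merely a weaker space — this is precisely where the restriction $\alpha\in(\tfrac12,1)$ is needed, and it is the technical heart of the argument. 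Everything else (the Schauder/contraction bookkeeping, the perturbation estimates for ${\bf A}$ and $K_s$, the domain flattening) is routine once the linear elliptic estimate and the shock ODE estimate are in hand, both of which I would cite from \cite{bae2011transonic,park2020transonic} with the modifications needed to carry the extra inhomogeneity ${\bf t}_{\star}$.
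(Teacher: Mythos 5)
Your outline of the fixed-$f$ elliptic subproblem — flatten to $\Om_{r_{\rm sh}}^+$, linearize about $(\nabla\varphi_0^+,{\bf 0})$, solve the mixed oblique-derivative problem in the weighted space $C^{2,\alpha}_{(-1-\alpha,\Gamma_{\rm w})}$ that handles the corners at $\Gamma_{\rm w}\cap\mathfrak S_f$ and $\Gamma_{\rm w}\cap\Gamma_{\rm ex}$ — is consistent in spirit with Lemmas \ref{lemma-varphi-est}--\ref{lemma-varphi}. The free-boundary step, however, takes a different route from the paper and contains a real gap.

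The paper does not differentiate the Dirichlet constraint to obtain a shock ODE. Having solved the nonlinear elliptic problem \eqref{non-varphi} (which carries only the oblique condition on $\mathfrak S_f$, plus the wall and exit conditions, and \emph{no} Dirichlet condition), it defines the functional $\mathfrak J(\rho^-,\varphi^-,p^-,v_{\rm ex},f):=(\varphi^--\varphi)(f(\phi),\phi)$ and shows in Lemma \ref{Lemm-Impli} that $\mathfrak J$ is continuously Fr\'echet differentiable and that $D_f\mathfrak J$ at the background is multiplication by $\partial_r(\varphi_0^--\varphi_0^+)(r_{\rm sh})\ne 0$ (by admissibility); the implicit function theorem then produces the unique $f$ with $\mathfrak J=0$. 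Your alternative of differentiating $\varphi[f](f(\phi),\phi)=\varphi^-(f(\phi),\phi)$ in $\phi$ and ``solving an ODE'' runs into two concrete obstructions. First, differentiation destroys the additive level of $f$ — precisely the constant that the nondegeneracy $\partial_r(\varphi_0^--\varphi_0^+)(r_{\rm sh})>0$ pins down in the IFT argument; invoking ``background-solution matching'' and monotonicity names the difficulty but does not resolve it. Second, because $\varphi[f]$ is a \emph{nonlocal} functional of $f$ and because $f'$ enters both through $K_s(f')$ and ${\bf t}_\star\cdot{\bf n}_f$, the relation you write, $\tilde f'=\mathcal G(\phi,\tilde f,\tilde f',\ldots)$, is neither a local ODE nor manifestly invertible in $\tilde f'$; calling it ``genuinely second order and quasilinear'' does not supply well-posedness or the required estimate in $C^{2,\alpha}_{(-1-\alpha,\{\phi=\phi_0\})}$.

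You also pass over the reformulation of the shock boundary condition in Lemma \ref{Lem-S-bd-re}, which is essential. That lemma uses the background relation $\partial_r\varphi_0^+=K_0/\partial_r\varphi_0^-$, the Dirichlet datum, and a Taylor expansion around $r_{\rm sh}$ to rewrite the normal-derivative condition on $\mathfrak S_f$ in the form $\nabla\chi\cdot{\bf b}-\mu_f\chi=\tilde g_1$, with $\mu_f$ a quotient of $r$-integrals of background quantities. The zeroth-order term $\mu_f\chi$ is what turns an otherwise pure conormal condition into a well-structured oblique/Robin-type one, and it is what builds the explicit $f$-dependence that is differentiated in \eqref{Df-J} to close the implicit function theorem argument. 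Your sketch keeps the raw normal-derivative datum and therefore does not produce either of these structures for free.
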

The proof of this lemma is provided in the next subsection. 
%
For  $(f,\varphi)$ obtained in Lemma \ref{Lem-f-var}, consider the following initial value problem for $(\Lambda,S)$:
\begin{equation}\label{ini-S}
\left\{\begin{split}
	&\begin{split}
	&{\bf A}(\nabla\varphi,{\bf t}_{\star})\cdot\nabla \Lambda=0,\quad
	{\bf A}(\nabla\varphi,{\bf t}_{\star})\cdot\nabla S=0\\
	\end{split}\quad\mbox{in}\quad\Om_f^+,\\
	&\Lambda=\Lambda^-,\quad S=S_{\rm sh}(f')\quad\mbox{on}\quad\mathfrak{S}_f.
\end{split}\right.
\end{equation}
\begin{lemma}\label{Lem-t}
For any $\alpha\in(\frac{1}{2},1)$,
there exists a small positive constant $\sigma_6\in(0,\sigma_5]$ depending only on the data and $(\delta_1,\delta_2)$  so that if 
$$\sigma_v\le\sigma_6,$$
then the initial value problem \eqref{ini-S} has a unique axisymmetric solution $(\Lambda,S)$ that satisfies 
\begin{equation}\label{R-Lambda}
\left\|\frac{\Lambda}{\sin\phi}\right\|_{1,\alpha,\mathcal{R}_f^+}^{(-\alpha,\{\phi=\phi_0\})}\le C\sigma_v\quad\mbox{and}\quad
\|S-S_0^+\|_{1,\alpha,\Om_f^+}^{(-\alpha,\Gamma_{\rm w})}\le C\left(1+\delta_1+\delta_2\right)\sigma_v
\end{equation}
for a constant $C>0$ depending only on the data. In \eqref{R-Lambda}, $\mathcal{R}_f^+$ is a two-dimensional space defined by 
$\mathcal{R}_f^+:=\{(r,\phi)\in\mathbb{R}^2:\, f(\phi)<r<r_{\rm ex}, \phi\in(0,\phi_0)\}$.
Furthermore, $S$ satisfies 
\begin{equation*}
\|\partial_\phi S\|_{0,\alpha,\Om_f^+}^{(1-\alpha,\Gamma_{\rm w})}\le C\left(1+\delta_1^2\sigma_v+\delta_2^2\sigma_v\right)\sigma_v
\end{equation*}
for a constant $C>0$ depending only on the data.
\end{lemma}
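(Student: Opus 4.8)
The plan is to solve \eqref{ini-S} by the method of characteristics. Since \eqref{free-var} has already been solved in Lemma \ref{Lem-f-var}, the vector field $\mathbf{A}(\nabla\varphi,\mathbf{t}_\star)$ is a known $C^{1,\alpha}$ field on $\overline{\Om_f^+}$, and by the divergence-free equation in \eqref{free-var} it is nowhere vanishing provided $\sigma_v$ is small (since the background field $\mathbf{A}(\nabla\varphi_0^+,\mathbf 0)$ has $\mathbf{A}\cdot\mathbf{e}_r>0$ bounded away from $0$, and the perturbation estimate \eqref{Lem-f-est} together with the smallness of $\mathbf{t}_\star$ from $\mathcal{I}(\delta_1)$ and $\mathcal{I}(\delta_2)$ keeps $\mathbf{A}\cdot\mathbf{e}_r$ close to that background value). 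First I would show the integral curves of $\mathbf{A}(\nabla\varphi,\mathbf{t}_\star)$ foliate $\Om_f^+$: each curve enters transversally through $\mathfrak{S}_f$, never meets $\Gamma_{\rm w}$ before reaching $\Gamma_{\rm ex}$, and crosses $\Gamma_{\rm ex}$ transversally. For axisymmetric fields it is cleanest to work in the $(r,\phi)$-plane, writing the characteristic ODE as $\frac{d\phi}{dr}=\frac{\mathbf{A}\cdot\mathbf{e}_\phi}{r\,\mathbf{A}\cdot\mathbf{e}_r}$, whose right-hand side is $O(\sigma_v)$-small in $C^\alpha$; hence the flow map is an $O(\sigma_v)$-perturbation of the radial foliation $\phi=\mathrm{const}$, and in particular each curve hits $\mathfrak{S}_f$ exactly once and stays away from $\{\phi=\phi_0\}$ by a fixed positive margin (except those originating near $\Gamma_{\rm w}$, handled by the weight).

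Next I would transport the data: the equations $\mathbf{A}\cdot\nabla\Lambda=0$ and $\mathbf{A}\cdot\nabla S=0$ say precisely that $\Lambda$ and $S$ are constant along characteristics, so $\Lambda=\Lambda^-\circ\Pi$ and $S=S_{\rm sh}(f')\circ\Pi$, where $\Pi:\overline{\Om_f^+}\to\mathfrak{S}_f$ is the "project back along the characteristic to the shock" map. The estimates in \eqref{R-Lambda} then follow by combining (i) the $C^{1,\alpha}$ regularity and smallness of $\Pi-(\text{radial projection})$, which is controlled by the $C^{1,\alpha}$ norm of the characteristic field, hence by $C(1+\delta_1+\delta_2)\sigma_v$ via \eqref{Lem-f-est} and the iteration-set bounds; (ii) the smallness of the data on $\mathfrak{S}_f$, namely $\|\Lambda^--\Lambda_0^-\|\le C\sigma_v$ (with $\Lambda_0^-\equiv0$, so $\Lambda^-=O(\sin\phi)\sigma_v$, giving the $\Lambda/\sin\phi$ bound) and $\|S_{\rm sh}(f')-S_0^+\|\le C(1+\delta_1+\delta_2)\sigma_v$, the latter obtained by expanding the explicit formula \eqref{def-A} for $S_{\rm sh}$ around the background and using that $S_0^+$ is exactly the background value of $S_{\rm sh}$ evaluated at $f=r_{\rm sh}$, $f'=0$, $({\bf u}^-,\rho^-,p^-)=(\nabla\varphi_0^-,\rho_0^-,p_0^-)$. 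Care with the weighted norms near $\Gamma_{\rm w}=\{\phi=\phi_0\}$ is needed: the characteristic field and $\Pi$ must be estimated in the $(-\alpha,\Gamma_{\rm w})$-weighted norms, which is inherited from the corresponding weighted bounds on $(f,\varphi,\mathbf{t}_\star)$; this is where one uses that all the input quantities live in spaces with exactly matching weights. For the factor $\frac{1}{\sin\phi}$ on the angular momentum, note $\Lambda$ vanishes identically on $\{\phi=0\}$ (axisymmetry) and is transported from $\Lambda^-$ which has the same vanishing, so $\Lambda/\sin\phi$ is a genuine $C^{1,\alpha}$ function up to the axis, with the claimed bound.

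For the final claim, the bound on $\|\partial_\phi S\|_{0,\alpha,\Om_f^+}^{(1-\alpha,\Gamma_{\rm w})}$, I would differentiate the representation $S=S_{\rm sh}(f')\circ\Pi$ in $\phi$: $\partial_\phi S=(S_{\rm sh}(f')\circ\Pi)'\cdot\partial_\phi\Pi$. Since $S_{\rm sh}(f')$ is itself $O(\sigma_v)$ in $C^{1,\alpha}$-type norms, the leading contribution would naively be $O(\sigma_v)$; the point of the stated bound $C(1+\delta_1^2\sigma_v+\delta_2^2\sigma_v)\sigma_v$ is that it is $\le C\sigma_v+C(\delta_1^2+\delta_2^2)\sigma_v^2$, i.e. the quadratic remainder is genuinely second order in $\sigma_v$ with the $\delta_i^2$ coefficients tracking the contributions of $\mathbf{t}_\star=\nabla\times\mathbf{W}_\ast+\frac{\Lambda_\ast}{r\sin\phi}\mathbf{e}_\theta$ (whose $C^{1,\alpha}$-size is $\le C(\delta_1+\delta_2)\sigma_v$, squared where it enters quadratically through terms like $|\mathbf{t}_\star|^2$ in $\mathbf{A}$ and in $K_s$). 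Concretely one separates $\partial_\phi\Pi$ into its background value (radial, contributing the $O(\sigma_v)$ term through $\partial_\phi S_{\rm sh}$) plus a correction whose size is $\le C(\delta_1+\delta_2)\sigma_v$, and $\partial_\phi(S_{\rm sh}(f')\circ\Pi)$ similarly, then multiplies out and collects terms, using the iteration bounds $\|\mathbf{W}_\ast\|\le\delta_1\sigma_v$, $\|\Lambda_\ast/\sin\phi\|\le\delta_2\sigma_v$ to classify each product.

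I expect the main obstacle to be the bookkeeping with the weighted Hölder norms near the corner $\partial\mathfrak{S}_f\cap\Gamma_{\rm w}$ and near the axis $\{\phi=0\}$ simultaneously: one must verify that the characteristic flow map is bi-Lipschitz with the right weights, that composition with it preserves the $C^{k,\alpha}_{(k+\cdots,\Gamma_{\rm w})}$ scales (which is not automatic — it needs the flow to respect the distance-to-$\Gamma_{\rm w}$ function up to comparable constants), and that dividing by $\sin\phi$ is harmless because both $\Lambda$ and the shock data $\Lambda^-$ carry the compensating vanishing. The transport estimates themselves, and the Taylor expansions of the explicit functions $K_s(f')$, $S_{\rm sh}(f')$ around the background, are routine once the geometry of the foliation is pinned down.
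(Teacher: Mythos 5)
Your proposal is conceptually sound — both you and the paper solve \eqref{ini-S} by transporting the initial data along characteristics of the vector field $\mathbf{A}(\nabla\varphi,\mathbf{t}_\star)$ — but the technical device is different, and the difference matters precisely at the point you yourself flag as the main obstacle. You propose to integrate the characteristic ODE $\tfrac{d\phi}{dr}=\tfrac{\mathbf{A}\cdot\mathbf{e}_\phi}{r\,\mathbf{A}\cdot\mathbf{e}_r}$ and then establish weighted $C^{1,\alpha}$ regularity of the resulting flow/projection map; you correctly note this ``is not automatic'' and leave it open. The paper instead first flattens the domain by $\mathfrak{T}_{r_{\rm sh},f}$, writes the transport equation in the rectangle $\mathcal{R}$ as $\mathcal{A}_{\tilde r}\partial_{\tilde r}\tilde S+\mathcal{A}_\phi\partial_\phi\tilde S=0$, and uses the identity $\partial_{\tilde r}\mathcal{A}_{\tilde r}+\partial_\phi\mathcal{A}_\phi=0$ (inherited from $\operatorname{div}\mathbf{A}=0$) to build a \emph{stream function} $w(\tilde r,\phi)=\int_0^\phi\mathcal{A}_{\tilde r}(\tilde r,z)\,dz$. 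The projection to the initial surface is then $\mathcal{K}=\mathcal{G}^{-1}\circ w$ with $\mathcal{G}(\phi)=w(r_{\rm sh},\phi)$, defined by quadrature and composition rather than by solving an ODE. The three structural facts the paper records — $\partial_{\tilde r}\mathcal{A}_{\tilde r}+\partial_\phi\mathcal{A}_\phi=0$, $\mathcal{A}_{\tilde r}/\sin\phi\ge\nu_\ast>0$, and $\mathcal{A}_\phi\equiv0$ on $\{\phi=0,\phi_0\}$ (i.e., the axis and the wall are themselves characteristics, which is the real reason interior characteristics never reach $\Gamma_{\rm w}$) — immediately give that $w(\tilde r,\cdot)$ has the same range $[w(r_{\rm sh},0),w(r_{\rm sh},\phi_0)]$ for every $\tilde r$, so $\mathcal{G}^{-1}\circ w$ is well defined, and the bi-Lipschitz bound $\mu_\ast\le\tfrac{\mathcal{K}(\tilde r,\phi_0)-\mathcal{K}(\tilde r,\phi)}{\phi_0-\phi}\le\mu_\ast^{-1}$ together with the weighted regularity of $\mathcal{K}$ follow from the regularity of $\mathcal{A}_{\tilde r}$ by integration, not by ODE theory. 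The $\Lambda/\sin\phi$ bound also drops out cleanly because of the $\sin\phi$ factor built into $\mathcal{A}_{\tilde r}$. In short, your approach buys intuitive transparency (genuine characteristic curves), but at the cost of a flow-regularity lemma in weighted spaces that the paper circumvents entirely; the stream-function reformulation is the device that closes the gap you identified.
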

One can prove Lemma \ref{Lem-t} by slightly adjusting \cite[Proof of Lemma 4.3]{park2020transonic}.  So we skip it. The key point of the proof is that the entropy $S$ and the angular momentum density $\Lambda$ are conserved along each stream line.

For the solution $\Lambda$ of \eqref{ini-S}, consider an extension $\Lambda_{\rm ext}$ defined in the same way as the definition \eqref{W-ext} of ${\bf W}_{\rm ext}$.
Then, $\Lambda_{\rm ext}$ satisfies 
\begin{equation*}
\begin{split}
\left\|\frac{\Lambda_{\rm ext}}{\sin\phi}\right\|_{1,\alpha,\mathcal{R}^+}^{(-\alpha,\{\phi=\phi_0\})}\le C_{\star}\sigma_v
\end{split}
\end{equation*}
for a constant $C_{\star}>0$ depending only on the data. 
We choose $\delta_2$ as 
\begin{equation*}
\delta_2:=C_{\star}
\end{equation*}
so that an iteration mapping $\mathcal{J}_t:\mathcal{I}(\delta_2)\to C^{1,\alpha}_{(-\alpha,\Gamma_{\rm w})}({\Om^+})$ defined by 
\begin{equation*}
\mathcal{J}_t:\Lambda_{\ast}\mapsto\Lambda_{\rm ext}
\end{equation*}
 maps $\mathcal{I}(\delta_2)$ into itself.
Since the iteration set $\mathcal{I}(\delta_2)$ is a convex and compact subset of $C^{1,\alpha/2}_{(-\alpha/2,\Gamma_{\rm w})}({\Om^+})$ and 
 $\mathcal{J}_t$ is continuous in $C^{1,\alpha/2}_{(-\alpha/2,\Gamma_{\rm w})}({\Om^+})$, 
 the Schauder fixed point theorem implies the mapping $\mathcal{J}_t$ has a fixed point $\Lambda_{\natural}\in\mathcal{I}(\delta_2)$. 
According to Lemma \ref{Lem-f-var}, there exists a unique solution $(f_{\natural},\varphi_{\natural})$ to the free boundary problem \eqref{free-var} associated with $\Lambda_{\ast}=\Lambda_{\natural}$.
And, according to Lemma \ref{Lem-t}, there exists a unique solution $S_{\natural}$ to the initial value problem \eqref{ini-S} associated with $(f,\varphi,\Lambda_{\ast})=(f_{\natural},\varphi_{\natural},\Lambda_{\natural})$.
Then, $(f_{\natural},S_{\natural}, \Lambda_{\natural}\vert_{\Om_{f_{\natural}}^+}, \varphi_{\natural})$ solves the free boundary problem \eqref{W-free} and satisfies the estimate \eqref{Pro-est}-\eqref{Pro1-S-est}.

To complete the proof, it remains to prove the uniqueness of a solution. 
The following lemma can be verified by the standard contraction argument.
\begin{lemma}\label{Uni-ss}
Let $\mathcal{U}^{(k)}:=(f^{(k)}, S^{(k)},\Lambda^{(k)}, \varphi^{(k)})$ ($k=1,2)$ be two axisymmetric solutions to the free boundary problem \eqref{W-free}, and suppose that each solution satisfies the estimate \eqref{Pro-est}-\eqref{Pro1-S-est}.
There exists a small positive constant $\sigma_7\in(0,\sigma_6]$ depending only on the data and $\delta_1$  so that if 
$$\sigma_v\le\sigma_7,$$
then  $\mathcal{U}^{(1)}=\mathcal{U}^{(2)}$.
\end{lemma}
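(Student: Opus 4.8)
The plan is to run a standard contraction argument on the difference of two solutions, but carried out carefully in the weighted H\"older framework to handle the corner singularity at $\{\phi=\phi_0\}\cap\{r=r_{\rm sh}\}$. First I would flatten both free boundaries: for each $k$, apply the transformation $\mathfrak{T}_{f^{(k)},r_{\rm sh}}$ from \eqref{no-trans} to pull the domain $\Om_{f^{(k)}}^+$ back to the fixed domain $\Om_{r_{\rm sh}}^+$, and rewrite the equations and boundary conditions in \eqref{W-free} for the pulled-back unknowns $(\hat S^{(k)},\hat\Lambda^{(k)},\hat\varphi^{(k)})$ on this common domain. Then set $\delta f:=f^{(1)}-f^{(2)}$, $\delta\varphi:=\hat\varphi^{(1)}-\hat\varphi^{(2)}$, $\delta S:=\hat S^{(1)}-\hat S^{(2)}$, $\delta\Lambda:=\hat\Lambda^{(1)}-\hat\Lambda^{(2)}$. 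Because both ${\bf t}_\ast=\nabla\times{\bf W}_\ast+(\frac{\Lambda}{r\sin\phi}){\bf e}_\theta$ in \eqref{W-free} depend on the \emph{same} fixed ${\bf W}_\ast$ and on $\Lambda$, I must be careful to track the $\Lambda$-dependence of ${\bf t}_\ast$; in the uniqueness statement ${\bf W}_\ast$ is fixed, so the only coupling is through $\Lambda$ itself.

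Next I would derive the linear(ized) problem satisfied by $(\delta f,\delta\varphi,\delta S,\delta\Lambda)$. Subtracting the two copies of the first-order transport equations ${\bf A}(\nabla\varphi,{\bf t}_\ast)\cdot\nabla S=0$ and ${\bf A}(\nabla\varphi,{\bf t}_\ast)\cdot\nabla\Lambda=0$ and using the boundary data $S=S_{\rm sh}(f')$, $\Lambda=\Lambda^-$ on $\mathfrak S_f$, a Gr\"onwall-type estimate along the (nearly radial) integral curves of ${\bf A}(\nabla\varphi^{(1)},{\bf t}_\ast^{(1)})$ gives
\begin{equation*}
\|\delta S\|_{1,\alpha,\Om_{r_{\rm sh}}^+}^{(-\alpha,\Gamma_{\rm w})}+\left\|\frac{\delta\Lambda}{\sin\phi}\right\|_{1,\alpha,\mathcal R^+}^{(-\alpha,\{\phi=\phi_0\})}\le C\Big(\|\delta f\|_{2,\alpha,(0,\phi_0)}^{(-1-\alpha,\{\phi=\phi_0\})}+\|\delta\varphi\|_{2,\alpha,\Om_{r_{\rm sh}}^+}^{(-1-\alpha,\Gamma_{\rm w})}\Big),
\end{equation*}
where the transport-coefficient differences and the difference of the flattening maps are controlled by $C\sigma_v$ times the right-hand side, exactly as in the proof of Lemma \ref{Lem-t}. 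Then $\delta\varphi$ solves a second-order elliptic equation $\mathrm{div}(\text{linearized }{\bf A})=(\text{error})$ with an oblique/Dirichlet mixture of boundary conditions on $\mathfrak S_{r_{\rm sh}}$, the Neumann-type condition on $\Gamma_{\rm w}$, and the exit condition on $\Gamma_{\rm ex}$; the error terms are bilinear in the small quantities and involve $\delta f$, $\delta S$, $\delta\Lambda$. The $a\ priori$ estimate for this fixed linear elliptic boundary value problem (the same one used to prove Lemma \ref{Lem-f-var} / Lemma \ref{Pro1}) gives $\|\delta\varphi\|\le C\big(\|\delta f\|+\sigma_v(\|\delta S\|+\|\delta\Lambda/\sin\phi\|+\|\delta\varphi\|)\big)$. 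Finally, the free-boundary (shock) equation for $\delta f$ comes from subtracting the Rankine--Hugoniot relation $-\nabla\varphi\cdot{\bf n}_f=-\frac{K_s(f')}{{\bf u}^-\cdot{\bf n}_f}+{\bf t}_\ast\cdot{\bf n}_f$ together with $\varphi=\varphi^-$ on $\mathfrak S_f$: this yields an ODE/elliptic problem on $(0,\phi_0)$ for $\delta f$ whose right-hand side is $O(\sigma_v)$ times the traces of $\delta\varphi,\delta S,\delta\Lambda$ on the shock, so $\|\delta f\|\le C\sigma_v(\|\delta\varphi\|+\|\delta S\|+\|\delta\Lambda/\sin\phi\|)$.

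Combining the four estimates and choosing $\sigma_7$ small enough that all the $C\sigma_v$ factors are $\le\frac12$ absorbs every term into the left-hand side and forces $\delta f=\delta\varphi=\delta S=\delta\Lambda=0$, hence $\mathcal U^{(1)}=\mathcal U^{(2)}$. The main obstacle I anticipate is \emph{not} the contraction bookkeeping but making the subtraction rigorous near the corner: the comparison of the two solutions requires comparing quantities defined on the two different domains $\Om_{f^{(k)}}^+$, and the flattening map $\mathfrak{T}_{f^{(2)},f^{(1)}}$ introduces coefficients whose weighted H\"older norms degenerate at $\{\phi=\phi_0\}$; one must check that the composition estimates for the weighted spaces $C^{m,\alpha}_{(k,\Gamma)}$ (already used to build ${\bf W}_{\rm ext}$ in \eqref{W-ext}) are stable under this change of variables and that the singular coefficient $\frac{1}{r^2\sin^2\phi}$ and the factor $\frac{\Lambda}{\sin\phi}$ are handled with the correct weights—this is where the restriction $\alpha\in(\frac12,1)$ and the particular exponents $-1-\alpha$, $-\alpha$, $1-\alpha$ enter, and it is the technical heart of the argument. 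Everything else is a routine, if lengthy, repetition of the a priori estimates established for Lemmas \ref{Lem-f-var} and \ref{Lem-t}.
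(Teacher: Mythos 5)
Your overall strategy --- pull both solutions back to the fixed domain $\Om_{r_{\rm sh}}^+$ via $\mathfrak{T}_{\,\cdot,\,\cdot}$, bound the differences $\delta f,\delta\varphi,\delta S,\delta\Lambda$ against one another, and close by a contraction --- is indeed the strategy of the paper's proof. But the individual estimates you write down place the smallness factor $\sigma_v$ in the \emph{wrong} places, and if you tried to derive them you would find they are not obtainable; the closure you exhibit is an algebraic accident of incorrect inputs.

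Concretely: (i) Your shock estimate $\|\delta f\|\le C\sigma_v(\|\delta\varphi\|+\cdots)$ is false. The Dirichlet condition $\varphi^{(k)}=\varphi^-$ on $\mathfrak S_{f^{(k)}}$ gives, after subtraction, $\big(\partial_r(\varphi_0^+-\varphi_0^-)(r_{\rm sh})+O(\sigma_v)\big)\,\delta f \approx -\,\delta\varphi\vert_{\text{shock}}$, and $\partial_r(\varphi_0^+-\varphi_0^-)(r_{\rm sh})$ is a \emph{fixed nonzero} constant (the shock jump of the background). Hence the correct bound is $\|\delta f\|\le C\|\delta\varphi\|$ with an $O(1)$ constant; compare the paper's \eqref{f12-est}--\eqref{f12-est-3}. (ii) Your transport estimate $\|\delta S\|+\|\delta\Lambda/\sin\phi\|\le C(\|\delta f\|+\|\delta\varphi\|)$ is also not derivable. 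Because the transport field ${\bf A}(\nabla\varphi,{\bf t}_\star)$ depends on $\Lambda$ through ${\bf t}_\star$, the difference $\mathcal K^{(1)}-\mathcal K^{(2)}$ of the characteristic maps itself involves $\delta\Lambda$, so the transport estimate for $\delta\Lambda$ has $\delta\Lambda$ on \emph{both} sides. The only reason this closes is that the boundary data $\Lambda^-$ is of size $O(\sigma_v)$, which puts the small factor $\sigma_v$ in front of the entire composition difference: the correct estimate is $\|\delta\Lambda/\sin\phi\|\le C\sigma_v\big(\|\delta\Lambda/\sin\phi\|+\|\delta\varphi\|+\|\delta f\|\big)$ (this is the content of \eqref{EST(I)}--\eqref{uni-lambda}). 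Your proposal omits this self-dependence, and more importantly, never observes that the $\sigma_v$ must come from the smallness of $\Lambda^-$ --- which is \emph{the} key observation of this lemma. (iii) Correspondingly, in the elliptic estimate for $\delta\varphi$, the contribution of $\delta\Lambda$ (through ${\bf t}_\star$ in the right-hand side ${\bf F}_\ast$, see \eqref{Def-F}) enters with an $O(1)$ coefficient, not $O(\sigma_v)$; the $\sigma_v$ sits only in front of the $\delta\varphi,\delta f$ terms (paper's \eqref{chi12-est}).

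With the correct estimates, the right contraction order is: shock $\Rightarrow$ $\|\delta f\|\lesssim\|\delta\varphi\|$; elliptic $\Rightarrow$ $\|\delta\varphi\|\lesssim\|\delta\Lambda/\sin\phi\|$; transport for $\delta\Lambda$ (with the crucial $\sigma_v$) $\Rightarrow$ $\|\delta\Lambda/\sin\phi\|\le C\sigma_v\|\delta\Lambda/\sin\phi\|$, hence $\delta\Lambda=0$, then $\delta\varphi=\delta f=0$, and finally $\delta S=0$ as a corollary. A minor point: $\delta S$ never needs to enter the contraction loop, since neither the drift ${\bf A}$, nor the elliptic coefficients, nor the shock equation depend on $S$; including it (with the $O(1)$ coefficient that genuinely appears from $S_{\rm sh}(f')$) only makes the bookkeeping heavier without any payoff.
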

%
Finally, we choose $\sigma_3$ as $\sigma_3=\sigma_7$ to complete the proof of Lemma \ref{Pro1}.\qed

\subsection{Proof of Lemma \ref{Lem-f-var}: Free boundary problem for $(f,\varphi)$}\label{S=lem4.2}
We solve  the following free boundary problem for $(f,\varphi)$:
\begin{eqnarray}
	&\label{re-Lem-41}&\mbox{div}\left({\bf A}(\nabla\varphi,{\bf t}_{\star})\right)=0\quad\mbox{in}\quad\Om_f^+,\\
	&\label{chi-bd-s}&\varphi=\varphi^-,\quad-\nabla\varphi\cdot{\bf n}_f=-\frac{K_s(f')}{{\bf u}^-\cdot{\bf n}_f}+{\bf t}_{\star}\cdot{\bf n}_f\quad\mbox{on}\quad\mathfrak{S}_f,\\
	&\label{chi-bd-w}&\nabla\varphi\cdot{\bf e}_{\phi}=0\quad\mbox{on}\quad\Gamma_{\rm w}\cap\partial\Om_f^+,\\
	&\label{chi-bd-ex}&\left(B_0-\frac{\lvert\nabla\varphi+{\bf t}_{\star}\rvert^2}{2}\right)^{\frac{1}{\gamma-1}}\nabla\varphi\cdot{\bf e}_r=v_{\rm ex}\quad\mbox{on}\quad\Gamma_{\rm ex}
	\end{eqnarray}
for ${\bf A}$, $K_s(f')$, and $v_{\rm ex}$ defined by \eqref{def-H-G}, \eqref{def-KS}, and \eqref{v-c-def}, respectively.

{\bf Step 1.} (Linearization) 
Set $\chi:=\varphi-\varphi_0^+$.
For  ${\bf s}, {\bf t}\in\mathbb{R}^3$, let us set 
\begin{equation}\label{def-mathfrak-a}
\mathfrak{a}({\bf s},{\bf t}):=\left(B_0-\frac{\lvert{\bf s}+{\bf t}\rvert^2}{2}\right)^{\frac{1}{\gamma-1}}\quad\mbox{and}\quad \tilde{\bf A}({\bf s},{\bf t}):=\mathfrak{a}({\bf s},{\bf t}){\bf s}.
\end{equation}
Then the equation $\mbox{div}\left({\bf A}(\nabla\varphi,{\bf t})\right)=0$ can be rewritten as 
\begin{equation}\label{chi-aij-eq}
\begin{split}
\sum_{i,j=1}^3\partial_j(a_{ij}({\bf 0})\partial_i\chi)
=&\,\mbox{div}{\bf F}(\nabla\chi,{\bf t}),
\end{split}
\end{equation}
where $a_{ij}$ and ${\bf F}=(F_1,F_2,F_3)$ are defined by 
\begin{equation}\label{Def-F}
\begin{split}
a_{ij}({\bf q}):=&\, \int_0^1\partial_{s_i}A_j(V_0+\eta{\bf q})d\eta\quad\mbox{for }A_j:=\tilde{\bf A}\cdot{\bf e}_j,\,V_0:=(\nabla\varphi_0^+,{\bf 0}),\\
F_i({\bf q}):=&\,-\mathfrak{a}(V_0+{\bf q})t_i-\int_0^1D_{\bf t}A_i(V_0+\eta{\bf q})d\eta\cdot {\bf t}\\
&-\int_0^1 D_{\bf s}A_i(V_0+\eta{\bf q})-D_{\bf s}A_i(V_0)d\eta\cdot{\bf s}
\end{split}
\end{equation}
with ${\bf q}=({\bf s},{\bf t})\in\mathbb{R}^3\times\mathbb{R}^3$, ${\bf s}=(s_1,s_2,s_3)$, ${\bf t}=(t_1,t_2,t_3)$.

Now, we derive the boundary conditions for $\chi$ from  \eqref{chi-bd-s}-\eqref{chi-bd-ex}.
The first boundary condition in \eqref{chi-bd-s} can be rewritten as
	\begin{equation}\label{c1}
	\chi+\varphi_0^+-\varphi^-=0\quad\mbox{on }\mathfrak{S}_f.
	\end{equation}
	A direct computation yields 
	\begin{equation}\label{shock-bd-h1}
	(a_{ij}({\bf 0})\partial_i\chi)\cdot({-\bf n}_f)=h_1(\nabla\chi,f')\mbox{ on }\mathfrak{S}_f
	\end{equation}
	for $h_1$ defined by 
	\begin{equation}\label{def-h1}
\begin{split}
	h_1(\nabla\chi,f')
	:=\left(\frac{(\gamma-1)B_0-\frac{\gamma+1}{2}\lvert\nabla\varphi_0^+\rvert^2}{(\gamma-1)(B_0-\frac{1}{2}\lvert\nabla\varphi_0^+\rvert^2)^{1-\frac{1}{\gamma-1}}}\right)\nabla\chi\cdot{\bf n}_f.
\end{split}
\end{equation}
	According to Lemma \ref{Lem-S-bd-re} below, we can replace the boundary condition \eqref{shock-bd-h1} by the following condition:
	\begin{equation}\label{c2}
	(a_{ij}({\bf 0})\partial_i\chi)\cdot({-\bf n}_f)=g_1(\nabla\chi,f',{\bf t}_{\star})\mbox{ on }\mathfrak{S}_f
	\end{equation}
	with $g_1$ defined by 
	\begin{equation}\label{def-g1}
\begin{split}
	g_1&(\nabla\chi,f',{\bf t}_{\star})\\
		:=&\left(\frac{(\gamma-1)B_0-\frac{\gamma+1}{2}\lvert\nabla\varphi_0^+\rvert^2}{(\gamma-1)(B_0-\frac{1}{2}\lvert\nabla\varphi_0^+\rvert^2)^{1-\frac{1}{\gamma-1}}}\right)\left[\nabla\chi\cdot({\bf n}_f-{\bf b})+\mu_f\chi+\tilde{g}_1\right]\end{split}
\end{equation}
for ${\bf b}$, $\mu_f$, and $\tilde{g}_1$ specified in Lemma \ref{Lem-S-bd-re} below. 
	
\begin{lemma}\label{Lem-S-bd-re}
There exist a vector valued function ${\bf b}={\bf b}(\nabla\chi,f')$ and two functions $\mu_f$ and $\tilde{g}_1=\tilde{g}_1(\nabla\chi,f',{\bf t}_{\star})$ such that 
\begin{equation}\label{lem-re-bd}
\nabla\chi\cdot {\bf b}-\mu_f\chi=\tilde{g}_1\quad\mbox{on}\quad\mathfrak{S}_f.
\end{equation}
\end{lemma}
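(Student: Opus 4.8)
The statement of Lemma \ref{Lem-S-bd-re} asks us to rewrite the Rankine--Hugoniot boundary condition \eqref{c1} for $\varphi$ on the \emph{free} boundary $\mathfrak{S}_f$ — which involves the unknown $\varphi^-$ evaluated on the unknown surface — as a first-order condition for $\chi=\varphi-\varphi_0^+$ of the schematic form \eqref{lem-re-bd}, with a controllable right-hand side. The plan is as follows.

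\textbf{Step 1: Parametrize the free boundary and expand $\varphi_0^+-\varphi^-$.} On $\mathfrak{S}_f=\{r=f(\phi)\}$ we write $f=r_{\rm sh}+\tilde f$ with $\tilde f$ small in the weighted norm, and we Taylor-expand the radial background potential $\varphi_0^+$ and the (perturbed, supersonic) $\varphi^-$ about $r=r_{\rm sh}$. Using that on the background $\varphi_0^+(r_{\rm sh})=\varphi_0^-(r_{\rm sh})$ and $\partial_r\varphi_0^+(r_{\rm sh})=K_0/\partial_r\varphi_0^-(r_{\rm sh})$ from \eqref{K0-def}, the quantity $(\varphi_0^+-\varphi^-)|_{r=f}$ becomes $(\partial_r\varphi_0^+(r_{\rm sh})-\partial_r\varphi^-(r_{\rm sh}))\tilde f$ plus a term linear in $(\varphi_0^--\varphi^-)|_{r=r_{\rm sh}}$ plus quadratic remainders in $(\tilde f,\nabla\chi)$ and terms measured by $\sigma_v$. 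The condition \eqref{c1} then reads, after dividing by the nonvanishing coefficient, as an expression giving $\tilde f$ in terms of $\chi|_{\mathfrak{S}_f}$ and data.

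\textbf{Step 2: Eliminate $\tilde f$ using the second shock condition.} Differentiating \eqref{c1} along $\mathfrak{S}_f$ (or equivalently combining it with the Bernoulli relation \eqref{B-equal} and the normal-derivative condition \eqref{c2}/\eqref{shock-bd-h1}), one expresses $f'$ — appearing in $\mathbf{n}_f$, $\bm\tau_f$, $K_s(f')$ — in terms of $\partial_\phi\chi$ on $\mathfrak{S}_f$ and of the supersonic data. Substituting this back into the identity from Step 1 converts the "$\tilde f = (\text{linear in }\chi) + (\text{data}) + (\text{quadratic})$" relation into a relation purely among $\nabla\chi$, $\chi$, $f'$, and ${\bf t}_{\star}$ on $\mathfrak{S}_f$. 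Collecting the coefficient of $\nabla\chi$ defines the vector field ${\bf b}={\bf b}(\nabla\chi,f')$ (so that ${\bf b}-{\bf n}_f$ is small, of order $\sigma_v+|\nabla\chi|$), the coefficient of $\chi$ defines the scalar $\mu_f$, and everything else — the genuinely inhomogeneous part carrying the perturbation of the supersonic solution and the higher-order terms in $(\tilde f,\nabla\chi)$ — is bundled into $\tilde g_1=\tilde g_1(\nabla\chi,f',{\bf t}_{\star})$. This yields \eqref{lem-re-bd}.

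\textbf{Step 3: Record the estimates.} Finally I would verify, using the regularity $(\rho^-,{\bf u}^-,p^-)\in[C^{2,\alpha}(\overline\Om)]^5$, the bound \eqref{3D-rem11} on $\sigma_v$, and $f\in\mathcal{B}_R(r_{\rm sh})$, that ${\bf b}-{\bf n}_f$, $\mu_f$, and $\tilde g_1$ are small in the appropriate weighted H\"older norms on $\mathfrak{S}_f$ — with $\tilde g_1$ of size $O(\sigma_v)$ plus quadratic in $\|\chi\|$, $\|\tilde f\|$ — so that \eqref{c2} is a genuine oblique-derivative perturbation of the Neumann-type condition coming from the flat-shock background. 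The main obstacle is bookkeeping the corner at $\Gamma_{\rm w}\cap\mathfrak{S}_f$: one must track the weighted H\"older exponents carefully (the factor $\delta_{\bf x}^{\max(\cdots)}$) through the Taylor expansions and through the nonlinear compositions, because the elimination of $\tilde f$ involves $f'$, which is only controlled in the weighted $C^{1,\alpha}$ space with weight $(-1-\alpha,\{\phi=\phi_0\})$, and one has to check that multiplying by the smooth background coefficients and restricting to $\mathfrak{S}_f$ does not destroy the claimed regularity near that corner. The algebraic identities themselves are a direct if tedious computation; the delicate point is purely the weighted-norm accounting.
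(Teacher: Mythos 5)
Your proposal follows essentially the same route as the paper's proof: Taylor-expand the background radial potentials about $r=r_{\rm sh}$, using $\varphi_0^+(r_{\rm sh})=\varphi_0^-(r_{\rm sh})$ and $\partial_r\varphi_0^+(r_{\rm sh})=K_0/\partial_r\varphi_0^-(r_{\rm sh})$ to convert $f-r_{\rm sh}$ into a $\mu_f\chi$ term, then eliminate the remaining free-boundary dependence via a second Rankine--Hugoniot relation, and finally collect the coefficients of $\nabla\chi$ and $\chi$ to define ${\bf b}$, $\mu_f$, and $\tilde g_1$. The one caveat is your lead option in Step 2 --- differentiating the Dirichlet condition tangentially along $\mathfrak{S}_f$ --- which is not what the paper does and by itself cannot supply the needed substitution for $\nabla\varphi\cdot{\bf n}_f$; the paper instead inserts the Neumann-type shock condition $-\nabla\varphi\cdot{\bf n}_f=-K_s(f')/({\bf u}^-\cdot{\bf n}_f)+{\bf t}_{\star}\cdot{\bf n}_f$ directly into $h_\ast(\nabla\chi)=\nabla\varphi\cdot{\bf e}_r-K_0/(\nabla\varphi_0^-\cdot{\bf e}_r)$, which is exactly the alternative you parenthetically offer.
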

\begin{proof}
We prove this by adjusting \cite[Proof of Lemma 3.3]{bae2011transonic}.
First, it is clear that 
\begin{equation}\label{45-1}
\nabla\chi\cdot{\bf e}_r=\nabla\varphi\cdot{\bf e}_r-\nabla\varphi_0^+\cdot{\bf e}_r=\nabla\varphi\cdot{\bf e}_r-\frac{K_0}{\nabla\varphi_0^-\cdot{\bf e}_r}+\frac{K_0}{\nabla\varphi_0^-\cdot{\bf e}_r}-\nabla\varphi_0^+\cdot{\bf e}_r
\end{equation} 
for $K_0$ given in \eqref{K0-def}. 
Then, we have
\begin{equation}\label{45-chi-mu}
\nabla\chi\cdot{\bf e}_r-\mu_f\chi=h(\nabla\chi,\mu_f) \mbox{ on }\mathfrak{S}_f
\end{equation}
with 
\begin{equation*}
\begin{split}
&\mu_f:=\frac{\int_0^1\frac{d}{dr}\left(\frac{K_0}{\partial_r\varphi_0^-}-\partial_r\varphi_0^+\right)\left(r_{\rm sh}+t(f(\phi)-r_{\rm sh})\right)dt}{\int_0^1\partial_r(\varphi_0^--\varphi_0^+)(r_{\rm sh}+t(f(\phi)-r_{\rm sh}))dt},\\
&h(\nabla\chi,\mu_f):=\nabla(\chi+\varphi_0^+)\cdot{\bf e}_r-\frac{K_0}{\nabla\varphi_0^-\cdot{\bf e}_r}-\mu_f(\varphi^--\varphi_0^-).
\end{split}
\end{equation*}

For notational simplicity, let us set 
\begin{equation}\label{45-hast-1}
h_{\ast}(\nabla\chi):=\nabla(\chi+\varphi_0^+)\cdot{\bf e}_r-\frac{K_0}{\nabla\varphi_0^-\cdot{\bf e}_r}\quad\mbox{and}\quad \chi^-:=\varphi^--\varphi_0^-.
\end{equation}
We first decompose $h_{\ast}(\nabla\chi)$ in the form of $h_{\ast}(\nabla\chi)=\beta_{\star}\cdot\nabla\chi+h_{\star}$.
For that purpose, we rewrite $h_{\ast}(\nabla\chi)$ as
\begin{equation}\label{45-hast}
h_{\ast}(\nabla\chi)=\mathfrak{b}_1-\mathfrak{b}_2
\end{equation}
for $\mathfrak{b}_1$ and $\mathfrak{b}_2$ defined by 
\begin{equation*}
\begin{split}
&\mathfrak{b}_1:=\nabla(\chi+\varphi_0^+)\cdot({\bf e}_r-{\bf n}_f)-{\bf t}\cdot{\bf n}_f,\\
&\mathfrak{b}_2:=\frac{K_0}{\nabla\varphi_0^-\cdot{\bf e}_r}-\frac{K_s(f')}{\nabla\varphi^-\cdot{\bf n}_f+{\bf t}^-\cdot{\bf n}_f}
\end{split}
\end{equation*}
with $K_s(f')$ and ${\bf t}^-$ given in \eqref{def-KS} and \eqref{t--}, respectively.

Since ${\bf n}_f-{\bf e}_r$ can be written as 
\begin{equation}\label{ne-rel}
\begin{split}
{\bf n}_f-{\bf e}_r=&\,\nu(\nabla\chi^-,\nabla\chi)-\nu({\bf 0},{\bf 0})\\
=&\,j_1(\nabla\chi^-,\nabla\chi)\nabla\chi^-+j_2({\bf 0},\nabla\chi)\nabla\chi
\end{split}
\end{equation}
for $\nu$, $j_1$, and $j_2$  defined by 
\begin{equation*}
\begin{split}
&\nu(\xi,\eta):=\frac{(\nabla\varphi_0^-+\xi)-(\nabla\varphi_0^++\eta)}{|(\nabla\varphi_0^-+\xi)-(\nabla\varphi_0^++\eta)|},\\
&j_1(\xi,\eta):=\int_0^1 D_\xi\nu(t\xi,\eta)dt, \quad j_2(\xi,\eta):=\int_0^1D_\eta\nu(\xi, t\eta)dt\quad\mbox{for }\xi,\eta\in\mathbb{R}^3,
\end{split}
\end{equation*}
$\mathfrak{b}_1$ can be rewritten as 
\begin{equation}\label{45-b11}
\mathfrak{b}_1=\beta_1(\nabla\chi)\cdot\nabla\chi+h_1(\nabla\chi,{\bf t},f')
\end{equation}
for $\beta_1(\nabla\chi)$ and $h_1(\nabla\chi,{\bf t},f')$ defined by 
\begin{equation*}
\begin{split}
&\beta_1(\nabla\chi):=-[j_2({\bf 0},\nabla\chi)](\nabla\varphi_0^++\nabla\chi),\\
&h_1(\nabla\chi,{\bf t},f'):=-(\nabla\varphi_0^++\nabla\chi)\cdot [j_1(\nabla\chi^-,\nabla\chi)\nabla\chi^-]+{\bf t}\cdot{\bf n}_f.
\end{split}
\end{equation*}

Now we compute $\mathfrak{b}_2$. 
By the definitions of $K_0$ and $K_s(f')$, we have  
\begin{equation*}
K_0-K_s(f')=\frac{\gamma-1}{\gamma+1}\left(|\nabla\varphi_0^-\cdot{\bf e}_r|^2-|(\nabla\varphi^-+{\bf t}^-)\cdot{\bf n}_f|^2\right)+\frac{2\gamma}{\gamma+1}\left(\frac{p_0^-}{\rho_0^-}-\frac{p^-}{\rho^-}\right).
\end{equation*}
By using \eqref{ne-rel},  we get
\begin{equation*}
K_0-K_s(f')=\frac{\gamma-1}{\gamma+1}\left(\beta_2^{\ast}(\nabla\chi,f')\cdot\nabla\chi+h_2^{\ast}\right)+\frac{2\gamma}{\gamma+1}\left(\frac{p_0^-}{\rho_0^-}-\frac{p^-}{\rho^-}\right)
\end{equation*}
for $\beta_2^{\ast}(\nabla\chi,f')$ and $h_2^{\ast}(\nabla\chi,f')$ defined by 
\begin{equation*}
{\small
\begin{split}
\beta_2^{\ast}(\nabla\chi,f'):=&\,-[j_2({\bf 0},\nabla\chi)](\nabla\varphi_0^-\cdot{\bf e}_r+\nabla\varphi_0^-\cdot{\bf n}_f)\nabla\varphi_0^-,\\
h_2^{\ast}(\nabla\chi,f'):=&\,-(\nabla\varphi_0^-\cdot{\bf e}_r+\nabla\varphi_0^-\cdot{\bf n}_f)\nabla\varphi_0^-\cdot [j_1(\nabla\chi^-,\nabla\chi)\nabla\chi^-]\\
&\,+|\nabla\varphi_0^-\cdot{\bf n}_f|^2-|\nabla\varphi^-\cdot{\bf n}_f|^2-|{\bf t}^-\cdot{\bf n}_f|^2-2(\nabla\varphi^-\cdot{\bf n}_f)({\bf t}^-\cdot{\bf n}_f).
\end{split}}
\end{equation*}
Then $\mathfrak{b}_2$ can be rewritten as 
\begin{equation}\label{45-b22}
\mathfrak{b}_2=\beta_2(\nabla\chi,f')\cdot\nabla\chi+h_2(\nabla\chi,f')
\end{equation}
for $\beta_2(\nabla\chi,f')$ and $h_2(\nabla\chi,f')$ defined by 
\begin{equation*}
{\small
\begin{split}
\beta_2(\nabla\chi,f'):=&\,\frac{\gamma-1}{\gamma+1}\frac{\beta_2^{\ast}(\nabla\chi,f')}{\nabla\varphi_0^-\cdot{\bf e}_r}+\frac{K_s(f')[j_2({\bf 0},\nabla\chi)]{\bf e}_r}{(\nabla\varphi^-+{\bf t}^-)\cdot{\bf n}_f},\\
h_2(\nabla\chi,f'):=&\,\frac{\gamma-1}{\gamma+1}\frac{h_2^{\ast}(\nabla\chi,f')}{\nabla\varphi_0^-\cdot{\bf e}_r}+\frac{1}{\nabla\varphi_0^-\cdot{\bf e}_r}\frac{2\gamma}{\gamma+1}\left(\frac{p_0^-}{\rho_0^-}-\frac{p^-}{\rho^-}\right)\\
&\,+\frac{K_s(f')}{(\nabla\varphi^-+{\bf t}^-)\cdot{\bf n}_f}\left[\frac{\nabla\chi^-\cdot{\bf n}_r+{\bf t}^-\cdot{\bf n}_f}{\nabla\varphi_0^-\cdot{\bf e}_r}+{\bf e}_r\cdot[j_1(\nabla\chi^-,\nabla\chi)\nabla\chi^-]\right].
\end{split}}
\end{equation*}

Combining \eqref{45-hast}, \eqref{45-b11}, and \eqref{45-b22} gives 
\begin{equation}\label{hast-final}
h_{\ast}(\nabla\chi)=(\beta_1(\nabla\chi)-\beta_2(\nabla\chi,f'))\cdot\nabla\chi+(h_1(\nabla\chi,{\bf t},f')-h_2(\nabla\chi,f')).
\end{equation}
Then, it follows from \eqref{45-chi-mu}, \eqref{45-hast-1}, and \eqref{hast-final} that 
\begin{equation}
\nabla\chi\cdot{\bf b}-\mu_f\chi=\tilde{g}_1
\end{equation}
for 
\begin{equation*}
\begin{split}
&{\bf b}:={\bf e}_r-\beta_1(\nabla\chi)+\beta_2(\nabla\chi,f'),\\
&\tilde{g}_1:=h_1(\nabla\chi,{\bf t},f')-h_2(\nabla\chi,f')-\mu_f\chi^-.
\end{split}
\end{equation*}
The proof of Lemma \ref{Lem-S-bd-re} is completed.
\end{proof}

The boundary conditions in \eqref{chi-bd-w} and \eqref{chi-bd-ex} directly imply 
	\begin{equation}\label{c3}
	\begin{split}
	&(a_{ij}({\bf 0})\partial_i\chi)\cdot{\bf e}_{\phi}=g_2(\nabla\chi)\mbox{ on }\Gamma_{\rm w}\cap\partial\Om_f^+,\\
	&(a_{ij}({\bf 0})\partial_i\chi)\cdot{\bf e}_r=g_3(v_{\rm ex},\nabla\chi)\mbox{ on }\Gamma_{\rm ex}
	\end{split}
	\end{equation}
	for $g_2$ and $g_3$ defined by 
	\begin{equation*}
	\begin{split}
	&g_2(\nabla\chi):=-\left[\left(a_{ij}(\nabla\chi)-a_{ij}({\bf 0})\right)\partial_i\chi\right]\cdot{\bf e}_{\phi},\\	
	&g_3(v_{\rm ex},\nabla\chi):=v_{\rm ex}-v_c-\left[\left(a_{ij}(\nabla\chi)-a_{ij}({\bf 0})\right)\partial_i\chi\right]\cdot{\bf e}_r.
	\end{split}
	\end{equation*}
We finally collect the boundary conditions for $\chi$  from  \eqref{c1}, \eqref{c2}, and \eqref{c3} as follows:
\begin{equation}\label{chi-bd-1}
\left\{\begin{split}
&\chi+\varphi_0^+-\varphi^-=0,\quad(a_{ij}({\bf 0})\partial_i\chi)\cdot({-\bf n}_f)=g_1(\nabla\chi,f',{\bf t}_{\star})\mbox{ on }\mathfrak{S}_f,\\ 
&(a_{ij}({\bf 0})\partial_i\chi)\cdot{\bf e}_{\phi}=g_2(\nabla\chi)\mbox{ on }\Gamma_{\rm w}\cap\partial\Om_f^+,\\
&(a_{ij}({\bf 0})\partial_i\chi)\cdot{\bf e}_r=g_3(v_{\rm ex},\nabla\chi)\mbox{ on }\Gamma_{\rm ex}.
\end{split}\right.
\end{equation}


{\bf Step 2.} 
For a positive constant $\delta_3$  to be determined later, define an iteration set $\mathcal{I}(\delta_3)$  by 
\begin{equation}\label{set-delta4}
\mathcal{I}(\delta_3):=\left\{\chi\in C^{2,\alpha}_{(-1-\alpha,\Gamma_{\rm w})}({\Om^+})\,\middle\vert\,\begin{split}
							&\chi\mbox{ is axisymmetric},\\
							&\|\chi\|_{2,\alpha,\Om^+}^{(-1-\alpha,\Gamma_{\rm w})}\le \delta_3\sigma_v
							\end{split}\right\}.
\end{equation}
Fix $\chi_{\ast}\in\mathcal{I}(\delta_3)$ and $f\in \mathcal{B}_{\sigma_v}(r_{\rm sh})$. 
We consider the following linear boundary value problem for $\chi$ in a fixed domain $\Om_f^+$:
\begin{equation}\label{lin-chi-pro}
\left\{\begin{split}
\sum_{i,j=1}^3\partial_j(a_{ij}({\bf 0})\partial_i\chi)=\mbox{div}{\bf F}_{\ast}&\mbox{ in }\Om_f^+,\\
(a_{ij}({\bf 0})\partial_i\chi)\cdot({-\bf n}_f)=g_1&\mbox{ on }\mathfrak{S}_f,\\ 
(a_{ij}({\bf 0})\partial_i\chi)\cdot{\bf e}_{\phi}=g_2&\mbox{ on }\Gamma_{\rm w}\cap\partial\Om_f^+,\\
(a_{ij}({\bf 0})\partial_i\chi)\cdot{\bf e}_r=g_3&\mbox{ on }\Gamma_{\rm ex}
\end{split}\right.
\end{equation}
with ${\bf F}_{\ast}$ and $g_k$ $(k=1,2,3)$ defined  by 
\begin{equation*}
\begin{split}
&{\bf F}_{\ast}:={\bf F}(\nabla\chi_{\ast},{\bf t}_{\star}),\\
&g_1:=g_1(\nabla\chi_{\ast},f',{\bf t}_{\star}),\,g_2:=g_2(\nabla\chi_{\ast}),\,g_3:=g_3(v_{\rm ex},\nabla\chi_{\ast})
\end{split}
\end{equation*}
for ${\bf F}$ given in \eqref{Def-F}. 

\begin{lemma}\label{lemma-varphi-est}
There exists a constant $C_{\sharp}>0$ depending only on the data so that if 
\begin{equation}\label{sigma-ast-1}
(1+\delta_1+\delta_2+\delta_3)\sigma_v\le C_{\sharp},
\end{equation} 
 then we have the following estimates:
\begin{equation}\label{g123-est}
\begin{split}
&\|a_{ij}(\nabla\chi_{\ast})-a_{ij}({\bf 0})\|_{1,\alpha,\Om_f^+}^{(-\alpha,\Gamma_{\rm w})}\le C\delta_3\sigma_v,\\
&\|{\bf F}_{\ast}\|_{1,\alpha,\Om_f^+}^{(-\alpha,\Gamma_{\rm w})}\le C\left(\delta_1+\delta_2+\delta_3^2\sigma_v\right)\sigma_v,\\
&\begin{split}
\|g_1\|_{1,\alpha,\mathfrak{S}_f}^{(-\alpha,\partial\mathfrak{S}_f)}
		&\le C\left(1+\delta_1+\delta_2+\delta_3^2\sigma_v\right)\sigma_v,
	\end{split}\\
&\|g_2\|_{C^0(\overline{\Gamma_{\rm w}})}\le C\delta_3^2\sigma_v^2,\\
&\|g_3\|_{1,\alpha,\Gamma_{\rm ex}}^{(-\alpha,\partial\Gamma_{\rm ex})}\le C\sigma_v+C\delta_3^2\sigma_v^2
\end{split}
\end{equation}
for a constant $C>0$ depending only on the data. 
If $\delta_3\sigma_v$ is chosen sufficiently small depending only on the data, then the equation in \eqref{lin-chi-pro} is uniformly elliptic in $\Om_f^+$.
\end{lemma}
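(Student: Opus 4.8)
The plan is to treat the statement as a collection of \emph{a priori} bounds for the coefficients and data of the linear oblique problem \eqref{lin-chi-pro}, and to estimate each of the five quantities in \eqref{g123-est} directly from its explicit Taylor-remainder representation in \eqref{Def-F}, \eqref{def-g1}, \eqref{def-mu} and Lemma \ref{Lem-S-bd-re}, reading off ellipticity from the principal part at the end. First I would fix $C_\sharp>0$, depending only on the data, so small that \eqref{sigma-ast-1} confines $V_0+(\nabla\chi_\ast,{\bf t}_\star)$---and the upstream quantities built from $\varphi^-,\rho^-,p^-$---to a fixed compact neighbourhood $\mathcal N$ of $V_0=(\nabla\varphi_0^+,{\bf 0})$ on which $B_0-\frac12|\cdot|^2$ is bounded below (no vacuum) and the flow is strictly subsonic; on $\mathcal N$ the functions $\mathfrak a$, $\tilde{\bf A}$, $\varrho$, $\nu$, $K_s$, $\mu$, $\mathcal A$ and every integrand in \eqref{Def-F}, \eqref{def-g1} are $C^\infty$ with bounds depending only on the data. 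I would also record the weighted-H\"older calculus used throughout---the product estimate $\|uv\|_{m,\alpha,D}^{(k_1+k_2,\Gamma)}\le C\|u\|_{m,\alpha,D}^{(k_1,\Gamma)}\|v\|_{m,\alpha,D}^{(k_2,\Gamma)}$, the composition estimate $\|g\circ u-g(0)\|_{m,\alpha,D}^{(k,\Gamma)}\le C\|u\|_{m,\alpha,D}^{(k,\Gamma)}$ for $g\in C^\infty(\mathcal N)$ and $\|u\|^{(k,\Gamma)}_{m,\alpha,D}$ small, and their trace analogues on $\mathfrak S_f$ and $\Gamma_{\rm ex}$---together with the elementary consequences of the membership conditions: $\|\nabla\chi_\ast\|_{1,\alpha,\Om^+}^{(-\alpha,\Gamw)}\le\delta_3\sigma_v$, $\|{\bf t}_\star\|_{1,\alpha,\Om^+}^{(-\alpha,\Gamw)}\le C(\delta_1+\delta_2)\sigma_v$ (since ${\bf t}_\star=\nabla\times{\bf W}_\ast+\frac{\Lambda_\ast}{r\sin\phi}{\bf e}_\theta$ with ${\bf W}_\ast\in\mathcal I(\delta_1)$, $\Lambda_\ast\in\mathcal I(\delta_2)$), $\|f-r_{\rm sh}\|_{2,\alpha,(0,\phi_0)}^{(-1-\alpha,\{\phi=\phi_0\})}\le\sigma_v$ (hence $\|{\bf n}_f-{\bf e}_r\|$ and $\|\mu_f\|$ are small in the relevant weighted traces), and $\|\varphi^--\varphi_0^-\|_{3,\alpha,\Om}+\|(\rho^-,p^-)-(\rho_0^-,p_0^-)\|_{2,\alpha,\Om}\le\sigma_v$.

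Granting this, the first two bounds are routine. Since $a_{ij}\in C^\infty(\mathcal N)$ and $a_{ij}(\nabla\chi_\ast)-a_{ij}({\bf 0})=\big(\int_0^1 Da_{ij}\big(t(\nabla\chi_\ast,{\bf t}_\star)\big)dt\big)\cdot(\nabla\chi_\ast,{\bf t}_\star)$, the composition and product estimates give $\|a_{ij}(\nabla\chi_\ast)-a_{ij}({\bf 0})\|_{1,\alpha,\Om_f^+}^{(-\alpha,\Gamw)}\le C\delta_3\sigma_v$ (absorbing the ${\bf t}_\star$-contribution via $\delta_1+\delta_2\le\delta_3$ in the parameter hierarchy fixed later in \S\ref{S=lem4.2}). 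For ${\bf F}_\ast={\bf F}(\nabla\chi_\ast,{\bf t}_\star)$, the first two terms in \eqref{Def-F} are bounded smooth multiples of ${\bf t}_\star$, hence $\le C(\delta_1+\delta_2)\sigma_v$, while the third is a genuine second-order remainder of the form (bounded smooth)$\times(\nabla\chi_\ast,{\bf t}_\star)\times\nabla\chi_\ast$, hence $\le C\big(\delta_3\sigma_v+(\delta_1+\delta_2)\sigma_v\big)\delta_3\sigma_v\le C(\delta_1+\delta_2+\delta_3^2\sigma_v)\sigma_v$, giving the stated bound for ${\bf F}_\ast$.

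Next I would treat the boundary data. For $g_1$ I would exploit the decomposition of Lemma \ref{Lem-S-bd-re}: the prefactor in \eqref{def-g1} depends only on the background, the ``Robin'' term $\mu_f\chi$ is carried on the left-hand side of the boundary condition, and what remains is (background factor)$\cdot\big[\nabla\chi_\ast\cdot({\bf n}_f-{\bf b})+\tilde g_1\big]$. Here $\tilde g_1=h_1-h_2-\mu_f\chi^-$ collects the genuine first-order data: the contributions built from $\nabla\chi^-=\nabla\varphi^--\nabla\varphi_0^-$ and $(\rho^-,p^-)-(\rho_0^-,p_0^-)$, together with $\mu_f\chi^-$, are $O(\sigma_v)$, and ${\bf t}\cdot{\bf n}_f$ is $O((\delta_1+\delta_2)\sigma_v)$, which produces the $C(1+\delta_1+\delta_2)\sigma_v$ part. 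The term $\nabla\chi_\ast\cdot({\bf n}_f-{\bf b})$ is the delicate one: writing ${\bf n}_f-{\bf b}=({\bf n}_f-{\bf e}_r)+\beta_1(\nabla\chi_\ast)-\beta_2(\nabla\chi_\ast,f')$, one must observe the background cancellation $\beta_1({\bf 0})={\bf 0}$, which holds because $\beta_1({\bf 0})=-D_\eta\nu({\bf 0},{\bf 0})\nabla\varphi_0^+$ and $D_\eta\nu({\bf 0},{\bf 0})=-|\nabla\varphi_0^--\nabla\varphi_0^+|^{-1}\big(I-{\bf e}_r\otimes{\bf e}_r\big)$ annihilates $\nabla\varphi_0^+$ (both $\nabla\varphi_0^\pm$ being parallel to ${\bf e}_r$), and likewise $\beta_2({\bf 0},0)={\bf 0}$; hence $\|\beta_1(\nabla\chi_\ast)\|,\|\beta_2(\nabla\chi_\ast,f')\|\le C(\delta_3+1)\sigma_v$ and $\|{\bf n}_f-{\bf e}_r\|\le C\sigma_v$, so $\nabla\chi_\ast\cdot({\bf n}_f-{\bf b})$ is $O(\delta_3\sigma_v(\delta_3+1)\sigma_v)\le C\delta_3^2\sigma_v\cdot\sigma_v$, and altogether $\|g_1\|_{1,\alpha,\mathfrak S_f}^{(-\alpha,\partial\mathfrak S_f)}\le C(1+\delta_1+\delta_2+\delta_3^2\sigma_v)\sigma_v$. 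For $g_2=-[(a_{ij}(\nabla\chi_\ast)-a_{ij}({\bf 0}))\partial_i\chi_\ast]\cdot{\bf e}_\phi$ and $g_3=(v_{\rm ex}-v_c)-[(a_{ij}(\nabla\chi_\ast)-a_{ij}({\bf 0}))\partial_i\chi_\ast]\cdot{\bf e}_r$, the first bound of this lemma makes the bilinear term $O((\delta_3\sigma_v)^2)$ in $C^0(\overline{\Gamw})$, so $\|g_2\|_{C^0(\overline{\Gamw})}\le C\delta_3^2\sigma_v^2$; and since $v_{\rm ex}-v_c$ is one of the summands of $\sigma_v$, $\|g_3\|_{1,\alpha,\Gamma_{\rm ex}}^{(-\alpha,\partial\Gamma_{\rm ex})}\le C\sigma_v+C\delta_3^2\sigma_v^2$.

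Finally, the principal part of \eqref{lin-chi-pro} has the \emph{constant} coefficient matrix $(a_{ij}({\bf 0}))$, which is precisely the linearization at $V_0$ of $\mathrm{div}\,{\bf A}(\nabla\varphi,{\bf 0})=0$; since the background is strictly subsonic this matrix is positive definite with eigenvalues bounded above and below by data-dependent constants, so the operator is uniformly elliptic on $\Om_f^+$, and the role of the smallness of $(1+\delta_1+\delta_2+\delta_3)\sigma_v$ is only to keep $V_0+(\nabla\chi_\ast,{\bf t}_\star)$ inside $\mathcal N$ so that $a_{ij}(\nabla\chi_\ast)$, ${\bf F}_\ast$ and $g_1,g_2,g_3$ are even defined. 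I expect the main obstacle to be essentially bookkeeping---tracking the weight exponents through the products and traces near the corner $\Gamw\cap\mathfrak S_f$ and near $\partial\Gamma_{\rm ex}$, and, above all, spotting the background cancellations $\beta_1({\bf 0})=\beta_2({\bf 0},0)={\bf 0}$ (and the analogous ones hidden inside $h_1,h_2$), without which $g_1$ would carry an uncontrolled $C\delta_3\sigma_v$ term and the Schauder iteration in \S\ref{S=lem4.2} would not close.
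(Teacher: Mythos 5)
Your proposal is correct and takes essentially the same route as the paper's proof: you estimate each quantity in \eqref{g123-est} directly from its explicit representation, using the weighted H\"older product and composition calculus, and you rely on the decomposition from Lemma \ref{Lem-S-bd-re} (together with \eqref{re-g1}, \eqref{b-mu-g1}, \eqref{ner}) for $g_1$. The one genuinely useful thing you make explicit that the paper leaves implicit in its bound $\|{\bf b}-{\bf e}_r\|\le C\|\chi_\ast\|$ is the pair of background cancellations $\beta_1({\bf 0})={\bf 0}$ and $\beta_2({\bf 0},0)={\bf 0}$ coming from $D_\eta\nu({\bf 0},{\bf 0})$ annihilating the radial direction; without spotting these, the term $\nabla\chi_\ast\cdot({\bf n}_f-{\bf b})$ would indeed pick up an uncontrolled $O(\delta_3\sigma_v)$ contribution and the iteration in \S\ref{S=lem4.2} would not close.
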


By a direct computation, one can prove the above lemma.
Then we get the following Lemma by similar ways  in \cite[Proof of Lemma 4.2]{park2020transonic} and \cite[Step 4 in Proof of Lemma 3.5]{bae2011transonic}.
\begin{lemma}\label{lemma-varphi}
For $\alpha\in(\frac{1}{2},1)$, the boundary value problem \eqref{lin-chi-pro} has a unique axisymmetric solution $\chi\in C^{2,\alpha}_{(-1-\alpha,\Gamma_{\rm w})}(\Om_f^+)$ that satisfies 
\begin{equation}\label{lin-chi-est}
\|\chi\|_{2,\alpha,\Om_f^+}^{(-1-\alpha,\Gamma_{\rm w})}\le C\left(\|{\bf F}_{\ast}\|_{1,\alpha,\Om_f^+}^{(-\alpha,\Gamma_{\rm w})}+\|g_1\|_{1,\alpha,\mathfrak{S}_f}^{(-\alpha,\partial\mathfrak{S}_f)}+\|g_2\|_{C^0(\overline{\Gamma_{\rm w}})}+\|g_3\|_{1,\alpha,\Gamma_{\rm ex}}^{(-\alpha,\partial\Gamma_{\rm ex})}\right)
\end{equation}
for a constant $C>0$ depending only on the data.
\end{lemma}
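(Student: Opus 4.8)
The plan is to regard \eqref{lin-chi-pro} as a linear, uniformly elliptic, divergence-form problem in the fixed curved domain $\Om_f^+$ with conormal/oblique boundary data of controlled size, and to derive \eqref{lin-chi-est} from interior and boundary Schauder estimates away from $\Gamma_{\rm w}$ combined with weighted corner estimates near the two edges where $\Gamma_{\rm w}$ meets $\mathfrak{S}_f$ and $\Gamma_{\rm ex}$. First I would record, from the explicit formula \eqref{aij00}, that the principal part $(a_{ij}({\bf 0}))$ is built only from the radial background velocity $\nabla\varphi_0^+$, and that subsonicity of the background makes it symmetric and uniformly positive definite with constants depending only on the data; hence \eqref{lin-chi-pro} is uniformly elliptic on $\overline{\Om_f^+}$. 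By \eqref{g123-est} the data ${\bf F}_\ast,g_1,g_2,g_3$ are small in the relevant weighted norms, the oblique direction $-{\bf n}_f$ on $\mathfrak{S}_f$ is $C^{1,\alpha}$-close to $-{\bf e}_r$, and the conormal conditions on $\Gamma_{\rm w}$, $\Gamma_{\rm ex}$ are close to the $\partial_\phi$- and $\partial_r$-conditions. Passing to the axisymmetric reduction, $\chi=\chi(r,\phi)$ solves a $2$D uniformly elliptic divergence-form equation on $\mathcal{R}_f^+$ whose only coordinate-induced singularity is a first-order coefficient behaving like $(\sin\phi)^{-1}\partial_\phi$ along the symmetry axis $\{\phi=0\}$; as in \cite{bae20183, park2020transonic} this is harmless since axisymmetric functions extend evenly across $\{\phi=0\}$, so that ray behaves as an interior line.

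The core of the proof is the weighted a priori estimate. Away from $\Gamma_{\rm w}$, interior Schauder estimates together with boundary Schauder estimates for oblique-derivative problems on the smooth portions of $\partial\Om_f^+$ (valid since $\alpha>\tfrac12$ and ${\bf n}_f,{\bf b},\mu_f\in C^{1,\alpha}$) control the unweighted part of \eqref{lin-chi-est}. Near the edges $\mathfrak{S}_f\cap\Gamma_{\rm w}$ and $\Gamma_{\rm ex}\cap\Gamma_{\rm w}$ the domain has opening angle close to, respectively equal to, $\tfrac{\pi}{2}<\pi$, with two boundary conditions of conormal/oblique type meeting there; after localizing and straightening, the weighted Hölder bound in the norm $\|\cdot\|_{2,\alpha,\cdot}^{(-1-\alpha,\Gamma_{\rm w})}$ follows from the corner estimates developed in \cite{bae20183, park2020transonic} (cf. also \cite{bae2011transonic} for the analogous potential-flow corner analysis). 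Existence of the solution is then obtained from this a priori estimate by a standard continuity/approximation argument, comparing with the corresponding problem for the Laplacian with the same boundary geometry; the solvability of the conormal system and the removal of the residual additive constant are handled in the usual way, using the divergence structure of the data and the Dirichlet relation $\chi=\varphi^--\varphi_0^+$ on $\mathfrak{S}_f$ carried along from \eqref{chi-bd-1}. Uniqueness then follows from the energy identity (equivalently, the maximum principle).

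The step I expect to be the main obstacle is the corner estimate near $\mathfrak{S}_f\cap\Gamma_{\rm w}$: the boundary condition inherited from the shock is genuinely \emph{oblique} — both the differentiation direction $-{\bf n}_f$ and the conormal direction determined by $(a_{ij}({\bf 0}))$ differ from the outward normal — so one cannot quote a Neumann--Neumann corner result verbatim but must check that the indicial exponents of the associated model corner problem avoid the range that would violate $C^{2,\alpha}_{(-1-\alpha,\Gamma_{\rm w})}$ regularity. This is where the structural hypotheses enter: the smallness of $f'$ near $\phi=\phi_0$ keeps the opening angle near $\tfrac{\pi}{2}$, and the explicit form \eqref{aij00} of $a_{ij}({\bf 0})$ together with the near-radial structure of ${\bf n}_f$ keeps the obliqueness small, so the model corner problem is a small perturbation of the well-posed Neumann problem on a near-right angle. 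Once this complementing-type condition is verified, the remaining estimates reduce to routine localization and patching.
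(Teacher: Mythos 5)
Your overall plan (uniform ellipticity from \eqref{aij00}, a weighted a priori estimate near the corners, existence by a continuity argument, uniqueness, axisymmetry) has the same shape as the paper's, but the central step---the weighted $C^{2,\alpha}_{(-1-\alpha,\Gamma_{\rm w})}$ a priori estimate---is obtained by a genuinely different mechanism. You propose a corner-regularity analysis: localize and straighten near the edges where $\Gamma_{\rm w}$ meets $\mathfrak{S}_f$ and $\Gamma_{\rm ex}$ and verify that the complementing/indicial exponents of the model oblique corner problem are compatible with $C^{2,\alpha}_{(-1-\alpha,\Gamma_{\rm w})}$. The paper instead runs an $L^2$-based Campanato-type iteration: near each point ${\bf x}_0\in\overline{\Om_f^+}$, including the corner circle $\Gamma_{\rm w}\cap\mathfrak{S}_f$, it writes $\chi=\chi_h+\chi_{nh}$ on small half-balls $D_\eta({\bf x}_0)$, uses the weak formulation together with H\"older/Sobolev/Poincar\'e/trace inequalities and the weighted data norms to obtain the decay bound $\int_{D_\eta}\lvert\nabla\chi_{nh}\rvert^2\le C\eta^{3+2\alpha}(\cdots)^2$, and then invokes the Campanato-to-H\"older machinery of \cite{han2011elliptic} to get a uniform $C^{1,\alpha}$ bound on $\Om_f^+$ up to the corners; the weighted $C^{2,\alpha}$ bound then follows from interior Schauder estimates with scaling in the distance to $\Gamma_{\rm w}$. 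This route sidesteps precisely the spectral/indicial analysis you identify as the main obstacle, and the $(-1-\alpha)$ weight is inherited directly from the fact that $\mathbf{F}_\ast,g_1,g_3$ lie in $(-\alpha)$-weighted spaces with $g_2$ only in $C^0$, not from the corner geometry. A second difference: the paper performs the entire estimate in three dimensions on $\Om_f^+$ (where $\{\phi=0\}$ is interior and nonsingular) and derives axisymmetry only at the end by a uniqueness/rotation argument ($\chi({\bf x}\cdot{\bf R}_{\tilde\theta})$ is also a solution), rather than reducing to a two-dimensional equation with the $(\sin\phi)^{-1}$ coefficient and using even reflection across the axis as you propose. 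Both are viable, but the paper's order of operations is cleaner here.

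Beyond the difference in route, there is one genuine gap. To remove the additive-constant ambiguity of the all-conormal linear problem you invoke ``the Dirichlet relation $\chi=\varphi^--\varphi_0^+$ on $\mathfrak{S}_f$ carried along from \eqref{chi-bd-1}.'' But that relation is \emph{not} a boundary condition in \eqref{lin-chi-pro}: in the linearized problem the shock location $f$ is held fixed and only the conormal/oblique condition is imposed on $\mathfrak{S}_f$, while the Dirichlet-type relation $(\varphi^--\varphi)(f(\phi),\phi)=0$ is the free-boundary condition used afterwards, in Step 3 of the proof of Lemma~\ref{Lem-f-var}, via the map $\mathfrak{J}$ in \eqref{ddef-J} and the implicit function theorem, to determine $f$. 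It therefore cannot pin down $\chi$ at the linear stage. The solvability and uniqueness of \eqref{lin-chi-pro} must come from another source; the paper obtains them by the method of continuity, a variational argument, and the comparison principle as in \cite[Step 4 in the proof of Lemma 3.5]{bae2011transonic}, and you would need to supply an argument of that kind in place of the Dirichlet relation.
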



For the solution $\chi\in C^{2,\alpha}_{(-1-\alpha,\Gamma_{\rm w})}(\Om_f^+)$ of \eqref{lin-chi-pro} associated with $(f,\chi_{\ast})$, consider an extension $\chi_{\rm ext}$ defined in the same way as the definition \eqref{W-ext} of ${\bf W}_{\rm ext}$.
With such an extension, define an iteration mapping $\mathcal{J}_p:\mathcal{I}(\delta_3)\to C^{2,\alpha}_{(-1-\alpha,\Gamma_{\rm w})}({\Om^+})$ by 
\begin{equation*}
\mathcal{J}_p:\chi_{\ast}\mapsto\chi_{\rm ext}.
\end{equation*}
Then, by \eqref{g123-est}-\eqref{lin-chi-est} and the definition of the extension, $\chi_{\rm ext}$ satisfies the estimate
\begin{equation}\label{chi-ext-est}
\|\chi_{\rm ext}\|_{2,\alpha,\Om_f^+}^{(-1-\alpha,\Gamma_{\rm w})}\le C_{\natural}\left(1+\delta_1+\delta_2+\delta_3^2\sigma_v\right)\sigma_v
\end{equation}
for a constant $C_{\natural}>0$ depending only on the data.
If we choose $\delta_3$ and $\sigma_5$ satisfy 
\begin{equation}\label{sigma-ast-2}
\delta_3=2C_{\natural}(1+\delta_1+\delta_2)\quad\mbox{ and }\quad\sigma_5\le\frac{1}{2C_{\natural}\delta_3},
\end{equation}
then \eqref{chi-ext-est} implies that $\chi_{\rm ext}\in\mathcal{I}(\delta_3)$. The iteration mapping $\mathcal{J}_p$ maps $\mathcal{I}(\delta_3)$ into itself.

Since the iteration set $\mathcal{I}(\delta_3)$ given by \eqref{set-delta4} is a convex and compact subset of $C^{2,\alpha/2}_{(-1-\alpha/2,\Gamma_{\rm w})}({\Om^+})$ and 
$\mathcal{J}_p$ is continuous in $C^{2,\alpha/2}_{(-1-\alpha/2,\Gamma_{\rm w})}({\Om^+})$,
the Schauder fixed point theorem yields that there exists a fixed point $\chi\in\mathcal{I}(\delta_3)$ of $\mathcal{J}_p$.
Then, by using a contraction principle, one can prove that there exists a small $\sigma^{\diamond}>0$ depending on the data and $(\delta_1,\delta_2)$ so that if 
\begin{equation}\label{dia}
\sigma_5\le\sigma^{\diamond},
\end{equation}
 then the fixed point is unique.
For such a fixed point $\chi$, a function $\varphi$ defined by $\varphi:=\chi+\varphi_0^+$ is a solution to the following nonlinear boundary problem:
\begin{equation}\label{non-varphi}
\left\{\begin{split}
	&\mbox{div}\left({\bf A}(\nabla\varphi,{\bf t}_{\star})\right)=0\quad\mbox{in}\quad\Om_f^+,\\
	&-\nabla\varphi\cdot{\bf n}_f=-\frac{K_s(f')}{{\bf u}^-\cdot{\bf n}_f}+{\bf t}_{\star}\cdot{\bf n}_f\quad\mbox{on}\quad\mathfrak{S}_f,\\
	&\nabla\varphi\cdot{\bf e}_{\phi}=0\quad\mbox{on}\quad\Gamma_{\rm w}\cap\partial\Om_f^+,\\
	&\left(B_0-\frac{\lvert\nabla\varphi+{\bf t}_{\star}\rvert^2}{2}\right)^{\frac{1}{\gamma-1}}\nabla\varphi\cdot{\bf e}_r=v_{\rm ex}\quad\mbox{on}\quad\Gamma_{\rm ex}.
	\end{split}\right.
	\end{equation}

{\bf Step 3.} Due to \eqref{sigma-ast-1}, \eqref{sigma-ast-2}, and \eqref{dia}, we  need to consider $\sigma_5$ satisfying
\begin{equation}\label{sigma-dagger}
\sigma_5\le \min\left\{\sigma^{\diamond},\frac{C_{\sharp}}{1+\delta_1+\delta_2+\delta_3},\frac{1}{2C_{\natural}\delta_3},\frac{1}{4}\right\}=:\sigma_{\dagger}.
\end{equation}
For a positive constant $\sigma_{\dagger}$ above, define a mapping $\mathfrak{J}:\mathcal{B}_{\sigma_{\dagger}}(\rho_0^-,\varphi_0^-,p_0^-,v_c)\times\mathcal{B}_{\sigma_{\dagger}}(r_{\rm sh})\to C^{2,\alpha}_{(-1-\alpha,\{\phi=\phi_0\})}((0,\phi_0))$ by
\begin{equation}\label{ddef-J}
\mathfrak{J}:(\rho^-,\varphi^-,p^-,v_{\rm ex}, f)\mapsto (\varphi^--\varphi)(f(\phi),\phi)\mbox{ for }\phi\in(0,\phi_0),
\end{equation}
where $\varphi$ is the solution of the nonlinear boundary problem \eqref{non-varphi} associated with $(\rho^-,\varphi^-,p^-,v_{\rm ex}, f)$.
The mapping $\mathfrak{J}$ is well-defined by Lemma \ref{lemma-varphi}. 
One  can prove the following Lemma by slightly adjusting \cite[Proof of Lemma 3.11]{bae2011transonic}.
\begin{lemma}\label{Lemm-Impli} 
\begin{itemize}
\item[(i)] Obviously,  it holds that
$\mathfrak{J}(\rho_0^-,\varphi_0^-,p_0^-,v_{\rm c}, r_{\rm sh})=0.$
\item[(ii)] There exists $\sigma_{\diamond}\in(0,\sigma_\dagger]$ depending only on the data and $(\delta_1,\delta_2)$ so that 
 $\mathfrak{J}$ is continuously Fr\'echet differentiable in $\mathcal{B}_{\sigma_{\diamond}}(\rho_0^-,\varphi_0^-,p_0^-,v_{\rm c})\times \mathcal{B}_{\sigma_{\diamond}}(r_{\rm sh})$.
\item[(iii)] $D_f\mathfrak{J}(\rho_0^-,\varphi_0^-,p_0^-,v_{\rm c}, r_{\rm sh}):C^{2,\alpha}_{(-1-\alpha,\{\phi=\phi_0\})}((0,\phi_0))\to C^{2,\alpha}_{(-1-\alpha,\{\phi=\phi_0\})}((0,\phi_0))$ is invertible.
\end{itemize}
\end{lemma}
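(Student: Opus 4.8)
The plan for Part (i) is a one-line check: when $(\rho^-,\varphi^-,p^-,v_{\rm ex},f)=(\rho_0^-,\varphi_0^-,p_0^-,v_c,r_{\rm sh})$, the functions $K_s$, $v_{\rm ex}$, $S_{\rm sh}$, the field ${\bf n}_f$ and ${\bf t}_\star$ take their background values, so $\varphi=\varphi_0^+$ solves \eqref{non-varphi}; since $\varphi_0^+(r_{\rm sh};r_{\rm sh})=\varphi_0^-(r_{\rm sh};r_{\rm sh})$ by property (ii) of the background solution, $\mathfrak{J}(\rho_0^-,\varphi_0^-,p_0^-,v_c,r_{\rm sh})(\phi)=(\varphi_0^--\varphi_0^+)(r_{\rm sh},\phi)=0$. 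For Part (ii) the plan is to flatten the free boundary and apply the implicit function theorem on a fixed domain. First I would transform \eqref{non-varphi} by $\mathfrak{T}_{f,r_{\rm sh}}$ into a boundary value problem for $\Phi:=\varphi\circ\mathfrak{T}_{r_{\rm sh},f}$ on the fixed domain $\Om\cap\{r>r_{\rm sh}\}$; its equation, coefficients, and boundary data depend on the parameters $(\rho^-,\varphi^-,p^-,v_{\rm ex})$ and — because the change of variables is affine in $r$ — on $f$ and $f'$ only (not on $f''$), through the functions of \eqref{def-H}--\eqref{def-A}, which are smooth away from the sonic speed. Hence the map carrying $(\text{parameters},\Phi)$ to the triple of equation and boundary residuals is continuously Fr\'echet differentiable near $(\text{background},\varphi_0^+)$, vanishes there, and its partial derivative in $\Phi$ at the background is the linear problem \eqref{lin-chi-pro}, which is an isomorphism by Lemma \ref{lemma-varphi}. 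The implicit function theorem then gives a $C^1$ solution map $(\text{parameters})\mapsto\varphi$ on a neighborhood of $P_0$, and composing it with the bounded linear evaluation $\varphi\mapsto\big(\phi\mapsto(\varphi^--\varphi)(f(\phi),\phi)\big)$ (a smooth map into $C^{2,\alpha}_{(-1-\alpha,\{\phi=\phi_0\})}((0,\phi_0))$) produces $\mathfrak{J}\in C^1$.

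Part (iii) is the heart. I would differentiate $\mathfrak{J}$ at $P_0:=(\rho_0^-,\varphi_0^-,p_0^-,v_c,r_{\rm sh})$ in the $f$-direction: for $\delta f\in C^{2,\alpha}_{(-1-\alpha,\{\phi=\phi_0\})}((0,\phi_0))$ set $f_\tau:=r_{\rm sh}+\tau\delta f$, let $\varphi_\tau$ be the solution of \eqref{non-varphi}, and let $\dot\varphi:=\frac{d}{d\tau}\big|_0\varphi_\tau$ be the shape derivative. From $\mathfrak{J}[f_\tau](\phi)=(\varphi^--\varphi_\tau)(f_\tau(\phi),\phi)$ one obtains
\[ D_f\mathfrak{J}(P_0)[\delta f](\phi)=(\partial_r\varphi_0^--\partial_r\varphi_0^+)(r_{\rm sh})\,\delta f(\phi)-\dot\varphi(r_{\rm sh},\phi). \]
Because the equation $\mathrm{div}\,{\bf A}(\nabla\varphi,{\bf t}_\star)=0$ and the conditions on the fixed walls $\Gamma_{\rm w},\Gamma_{\rm ex}$ carry no $f$-dependence, $\dot\varphi$ solves a homogeneous uniformly elliptic equation in $\Om\cap\{r>r_{\rm sh}\}$ with homogeneous conditions on $\Gamma_{\rm w}$ and $\Gamma_{\rm ex}$, and on $\mathfrak{S}=\{r=r_{\rm sh}\}$ an oblique-derivative condition whose inhomogeneity is the $f$-derivative of $-\nabla\varphi\cdot{\bf n}_f+K_s(f')/({\bf u}^-\cdot{\bf n}_f)-{\bf t}_\star\cdot{\bf n}_f$. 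Computing this term with $\nabla\varphi_0^\pm=\partial_r\varphi_0^\pm{\bf e}_r$, $\frac{d}{d\tau}\big|_0{\bf n}_{f_\tau}=-\delta f'{\bf e}_\phi$, the Bernoulli law \eqref{Back-B} (which makes $K_s(0)$ the radial constant $K_0$) and the background Rankine--Hugoniot relations \eqref{K0-def} — rewriting the shock condition as in Lemma \ref{Lem-S-bd-re} if convenient — shows it is merely a smooth multiple of $\delta f$, so $\dot\varphi$ depends on $\delta f$ only through lower-order terms. Thus $D_f\mathfrak{J}(P_0)=m\,\mathrm{Id}+\mathcal{K}$ with $m:=(\partial_r\varphi_0^--\partial_r\varphi_0^+)(r_{\rm sh})>0$ by admissibility (property (ix)) and $\mathcal{K}:\delta f\mapsto-\dot\varphi|_{\mathfrak{S}}$ compact (a derivative is gained in the elliptic estimate for $\dot\varphi$ and the corresponding weighted H\"older embedding on $(0,\phi_0)$ is compact). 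Hence $D_f\mathfrak{J}(P_0)$ is Fredholm of index zero, and it suffices to prove injectivity.

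For injectivity, a $\delta f$ in the kernel would force $\dot\varphi$ to satisfy, besides its equation and oblique condition, the Dirichlet relation $\dot\varphi|_{\mathfrak{S}}=m\,\delta f$ — that is, $(\delta f,\dot\varphi)$ solves the homogeneous linearization of the free boundary problem \eqref{free-var} at the background. Solving the well-posed mixed Dirichlet problem for $\dot\varphi$ (Dirichlet data $m\,\delta f$ on $\mathfrak{S}$, homogeneous on $\Gamma_{\rm w},\Gamma_{\rm ex}$) and substituting the result into the oblique condition on $\mathfrak{S}$ collapses the kernel equation to a homogeneous second-order Sturm--Liouville boundary value problem for $\delta f$ on $(0,\phi_0)$, with $\delta f'(0)=0$ at the axis and the weighted decay at $\phi=\phi_0$ built into the space, whose coefficients carry a definite sign inherited from the monotonicity (property (viii)) and admissibility (property (ix)) of the background radial solution; a maximum-principle (or energy) argument, exactly along the lines of \cite[Section 2.4 and Lemma 3.3]{bae2011transonic}, then gives $\delta f\equiv0$. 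The main obstacle, as always in this circle of ideas, is the two sign computations — that the linearized Rankine--Hugoniot condition has precisely the claimed purely lower-order structure, and that the reduced ODE operator for $\delta f$ has the right sign so the uniqueness argument closes — both of which amount to careful bookkeeping of the linearized Rankine--Hugoniot conditions against the Bernoulli law and the background monotonicity.
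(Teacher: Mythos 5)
Your plan for part (i) matches the paper, and your part (ii) plan (flatten by $\mathfrak{T}_{f,r_{\rm sh}}$, verify Fr\'echet differentiability, note the domain transformation is affine in $r$ so the coefficients see only $f,f'$) is the same scheme the paper carries out by direct computation of the directional derivative $\chi_{\star}$. The substantive divergence is in part (iii), where your route is genuinely different from the paper's.

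The paper does \emph{not} encounter a compact perturbation at all. Its shock boundary condition is the one that has already been rewritten through Lemma \ref{Lem-S-bd-re}: the mean-value decomposition \eqref{45-1}--\eqref{45-chi-mu} absorbs both the original oblique Rankine--Hugoniot relation and the shock-fitting Dirichlet condition $\chi+\varphi_0^+=\varphi^-$ into a single Robin-type condition $(a_{ij}({\bf 0})\partial_i\chi)\cdot(-{\bf n}_f)=g_1$ whose zeroth-order coefficient is $\mu_f$. At the background state $\nabla\chi_0={\bf 0}$, ${\bf t}_\star={\bf 0}$, $\chi^-=0$, $f'=0$, so $g_1^0,g_2^0,g_3^0,{\bf F}_0$ all vanish, and every $\tilde f$-- or $\tilde f'$--source in \eqref{star-chi} is a linearized Jacobian term multiplied by one of these vanishing background quantities. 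Hence $\chi_\star\equiv0$, and \eqref{Df-J} reads $D_f\mathfrak{J}[\tilde f]=\tilde f\,\partial_r(\varphi_0^--\varphi_0^+)(r_{\rm sh})$, a positive multiple of the identity. Invertibility is a one-line sign check using admissibility. Your plan instead linearizes the \emph{unreformulated} oblique condition of \eqref{non-varphi}, for which the linearization carries a nontrivial $c_1\delta f$ source on the shock (your $\tilde f'$-cancellation from the radial structure is correct, but $c_1\neq0$ in general), so your $\dot\varphi\neq0$ and you need $D_f\mathfrak{J}=m\,\mathrm{Id}+\mathcal{K}$ with $\mathcal{K}$ compact, Fredholm index zero, and a separate injectivity proof. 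What the paper's reformulation buys is that $\mathcal{K}\equiv0$, so none of that machinery is needed.

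The injectivity step in your plan has a real gap, not merely a deferred computation. Solving the well-posed mixed problem for $\dot\varphi$ with Dirichlet data $m\,\delta f$ on $\mathfrak{S}$ and substituting back into the oblique condition gives a relation of the form $(\text{DtN of the interior problem})[\delta f]+c_1\delta f=0$, where the Dirichlet-to-Neumann operator is \emph{nonlocal}; this is not a second-order Sturm--Liouville ODE on $(0,\phi_0)$. One can try to diagonalize mode-by-mode using the separated structure of the radial background (each Gegenbauer/Legendre mode yields a scalar radial ODE with two-point boundary conditions, whose sign can then be read off from the monotonicity and admissibility properties, in the spirit of \cite[Section 2.4]{bae2011transonic}), but your sketch neither states this nor identifies which sign condition on the background closes the argument. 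If you wish to pursue your route, you must either (a) carry out that modal analysis explicitly, or (b) first perform the Robin reformulation of Lemma \ref{Lem-S-bd-re}, observe that it reduces $\mathcal{K}$ to zero, and thereby recover the paper's one-line conclusion.
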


By Lemma \ref{Lemm-Impli} and the implicit function theorem, for $\sigma_{\diamond}$ given in Lemma \ref{Lemm-Impli}, there exists a constant $\sigma_5\in(0,\sigma_{\diamond}]$ depending only on the data and $(\delta_1,\delta_2)$ so that there is a unique mapping $\mathfrak{G}:\mathcal{B}_{\sigma_5}(\rho_0^-,\varphi_0^-,p_0^-,v_{\rm c})\to \mathcal{B}_{\sigma_5}(r_{\rm sh})$ satisfying 
\begin{equation*}
\mathfrak{J}({\bf p},\mathfrak{G}({\bf p}))=0\mbox{ for all }{\bf p}\in\mathcal{B}_{\sigma_5}(\rho_0^-,\varphi_0^-,p_0^-,v_{\rm c})
\end{equation*}
and $\mathfrak{G}$ is continuously Fr\'echet differentiable in $\mathcal{B}_{\sigma_5}(\rho_0^-,\varphi_0^-,p_0^-,v_{\rm c})$.
Then, for the solution $\varphi$ of the nonlinear boundary problem \eqref{non-varphi} associated with $(\rho^-,\varphi^-,p^-,v_{\rm ex}, f)=(\rho^-,\varphi^-,p^-,v_{\rm ex}, \mathfrak{G}(\rho^-,\varphi^-,p^-,v_{\rm ex}))$, $(\mathfrak{G}(\rho^-,\varphi^-,p^-,v_{\rm ex}),\varphi)$ is the unique solution of the free boundary problem \eqref{re-Lem-41}-\eqref{chi-bd-ex}.
The proof of Lemma \ref{Lem-f-var} is completed.
\qed



\section{Proof of Theorem \ref{Helmholtz-Theorem}}\label{Sec-Mthm}

\subsection{Proof of Proposition \ref{free-v-Pro}}\label{Section-Pro}
In \S\ref{Ss-pro} and \S\ref{Sec-Pro}, we proved Proposition \ref{free-v-Pro} (a).
To complete the proof of Proposition \ref{free-v-Pro}, it remains to prove Proposition \ref{free-v-Pro} (b), which is that the solution of the free boundary problem \eqref{free-v} is unique.

Let $(f^{(k)}, S^{(k)}, \Lambda^{(k)}, \varphi^{(k)}, {\bf W}^{(k)})$ ($k=1,2$) be two axisymmetric solutions of the free boundary problem \eqref{free-v}.
Suppose that each solution satisfies the estimate \eqref{free-v-est}.
Let ${\bf x}$ and ${\bf y}$ be the Cartesian coordinate systems for $\Om_{f^{(1)}}^+$ and $\Om_{f^{(2)}}^+$, respectively.
And, let $(r,\phi,\theta)$ and $(\tilde{r},\tilde{\phi},\tilde{\theta})$ be the spherical coordinate systems for $\Om_{f^{(1)}}^+$ and $\Om_{f^{(2)}}^+$, respectively.
For simplification, let us set $\mathfrak{T}$ as $\mathfrak{T}:=\mathfrak{T}_{f^{(1)},f^{(2)}}$.  And, let us set 
$\tilde{S}^{(2)}:=S^{(2)}\circ\mathfrak{T}$, $ \tilde{\Lambda}^{(2)}:=\Lambda^{(2)}\circ\mathfrak{T}$, $\tilde{\varphi}^{(2)}:=\varphi^{(2)}\circ\mathfrak{T}$, and $\tilde{\bf W}^{(2)}:={\bf W}^{(2)}\circ\mathfrak{T}$.
Then $\tilde{\bf W}^{(2)}$ satisfies 
\begin{equation}\label{EQ-W2}
\left\{\begin{split}
&-\Delta_{\bf x}\tilde{\bf W}^{(2)}=
\mbox{div}_{\bf x}{\bf H}+{\bf G}^{(2)}\,\,\mbox{in}\,\,\Om_{f^{(1)}}^+,\\
&-\nabla_{\bf x}\tilde{\bf W}^{(2)}\cdot{\bf n}_{f^{(1)}}+\mu(f^{(1)},(f^{(1)})')\tilde{\bf W}^{(2)}=\mathfrak{B}^{(2)}\,\,\mbox{on}\,\,\mathfrak{S}_{f^{(1)}},\\
&\tilde{\bf W}^{(2)}={\bf 0}\,\,\mbox{on}\,\,\partial\Om_{f^{(1)}}^+\backslash\mathfrak{S}_{f^{(1)}},
\end{split}\right.
\end{equation}
where ${\bf H}$, ${\bf G}^{(2)}$, and $\mathfrak{B}^{(2)}$ are defined as follows:

(i) For a jacobian matrix ${\bf M}:={\bf J}_{f^{(2)},f^{(1)}}=\left[\frac{\partial x_i}{\partial y_j}\right]_{i,j=1}^3$ and a $3\times3$ identity matrix $\mathbb{I}_3$, the function ${\bf H}$ is defined by
\begin{equation*}
{\bf H}:=\left(-\mathbb{I}_3+\frac{{\bf M}^T{\bf M}}{\det{\bf M}}\right)\nabla_{\bf x}\tilde{\bf W}^{(2)}.
\end{equation*}

(ii) For $G^{(2)}:=G\left(\tilde{r},\tilde{\phi},S^{(2)},\Lambda^{(2)},\partial_{\tilde{\phi}}S^{(2)},\partial_{\tilde{\phi}}\Lambda^{(2)},\nabla_{\bf y}\varphi^{(2)}+\nabla_{\bf y}\times{\bf W}^{(2)}+\frac{\Lambda^{(2)}}{\tilde{r}\sin\tilde{\phi}}{\bf e}_{\theta}\right)$ and $\tilde{G}^{(2)}:=G^{(2)}\circ\mathfrak{T}$,
the function ${\bf G}^{(2)}$ is defined by $${\bf G}^{(2)}:=\frac{1}{\det{\bf M}}\tilde{G}^{(2)}{\bf e}_{\theta}.$$

(iii) The function $\mathfrak{B}^{(2)}$ is defined by
\begin{equation*}
\begin{split}
\mathfrak{B}^{(2)}
:=&\,-\nabla_{\bf x}\tilde{\bf W}^{(2)}\cdot{\bf n}_{f^{(1)}}+\mu(f^{(1)},(f^{(1)})')\tilde{\bf W}^{(2)}\\
&-({\bf M}\cdot\nabla_{\bf x}\tilde{\bf W}^{(2)})\cdot{\bf n}_{f^{(2)}}+\mu(f^{(2)},(f^{(2)})')\tilde{\bf W}^{(2)}
-\mathcal{A}(f^{(2)},(f^{(2)})'){\bf e}_{\theta}
\end{split}
\end{equation*}
for $\mu$ and $\mathcal{A}$ given by \eqref{def-mu}.

Subtracting the boundary value problem \eqref{EQ-W2} for $\tilde{\bf W}^{(2)}$ from the boundary value problem for ${\bf W} ^{(1)}$ gives the following boundary value problem for $\tilde{\bf W}:={\bf W}^{(1)}-\tilde{\bf W}^{(2)}$ in $\Omega_{f^{(1)}}^+$:
\begin{equation}\label{tilde-W-eq}
\left\{\begin{split}
&\begin{split}
	-\Delta_{\bf x}\tilde{\bf W}
	=&\,\mbox{div}_{\bf x}{\bf H}+\left(G^{(1)}-\frac{\tilde{G}^{(2)}}{\det{\bf M}}\right){\bf e}_{\theta}=:\tilde{\bf G}\quad\mbox{in}\,\,\Om_{f^{(1)}}^+,
	\end{split}\\
&-\nabla_{\bf x}\tilde{\bf W}\cdot{\bf n}_{f^{(1)}}+\mu(f^{(1)},(f^{(1)})')\tilde{\bf W}=\mathcal{A}(f^{(1)},(f^{(1)})'){\bf e}_{\theta}-\mathfrak{B}^{(2)}=:\tilde{\mathcal{A}}\,\,\mbox{on}\,\,\mathfrak{S}_{f^{(1)}},\\
&\tilde{\bf W}={\bf 0}\,\,\mbox{on}\,\,\partial\Om_{f^{(1)}}^+\backslash\mathfrak{S}_{f^{(1)}}
\end{split}\right.
\end{equation}
for 
$G^{(1)}:=G\left(r,\phi,S^{(1)},\Lambda^{(1)},\partial_{\phi}S^{(1)},\partial_{\phi}\Lambda^{(1)},\nabla_{\bf x}\varphi^{(1)}+\nabla_{\bf x}\times{\bf W}^{(1)}+\frac{\Lambda^{(1)}}{r\sin\phi}{\bf e}_{\theta}\right).$
One can easily see that $\tilde{\bf W}_{nh}:=\tilde{\bf W}-\tilde{\bf W}_h $ with $\tilde{\bf W}_h$ satisfying 
\begin{equation*}
\left\{\begin{split}
&-\Delta_{\bf x}\tilde{\bf W}_h={\bf 0}\quad\mbox{in}\,\,D_{\eta}({\bf x}_0),\\
&-\nabla_{\bf x}\tilde{\bf W}_h\cdot{\bf n}_{f^{(1)}}+\mu(f^{(1)},(f^{(1)})')\tilde{\bf W}_h={\bf 0}\,\,\mbox{on}\,\,\partial D_{\eta}({\bf x}_0)\cap\mathfrak{S}_{f^{(1)}},\\
&\tilde{\bf W}_h={\bf 0}\,\,\mbox{on}\,\,\partial D_{\eta}({\bf x}_0)\cap\Gamma_{\rm w},\\
&\tilde{\bf W}_h=\tilde{\bf W}\,\,\mbox{on}\,\,\partial D_{\eta}({\bf x}_0)\cap\Omega_{f^{(1)}}^+,\\
\end{split}\right.
\end{equation*}
satisfies 
\begin{equation*}
\int_{D_{\eta}({\bf x}_0)}\sum_{i=1}^3\partial_i \tilde{\bf W}_{nh}\partial_i\xi d{\bf x}=-\int_{D_\eta({\bf x}_0)}\tilde{\bf G}\xi d{\bf x}
+\int_{\partial D_{\eta}({\bf x}_0)\cap\mathfrak{S}_f}\tilde{\mathcal{A}}\xi dS
\end{equation*}
for any $\xi\in\left\{\zeta\in H^1( D_{\eta}({\bf x}_0)):\zeta=0\mbox{ on }\partial D_{\eta}({\bf x}_0)\cap\left(\Gamma_{\rm w}\cup\Om_f^+\right)\right\}$.

By the definitions \eqref{def-H-G} of $\varrho$ and $G$, the functions $G^{(1)}$ and $\tilde{G}^{(2)}$ can be rewritten as follows:
\begin{equation*}
\begin{split}
&G^{(1)}=\left(\frac{\partial_{\phi}S^{(1)}}{S^{(1)}}\right)K^{(1)}+\frac{\Lambda^{(1)}(\partial_{\phi}\Lambda^{(1)})}{\sin^2\phi}L^{(1)},\\
&\tilde{G}^{(2)}=\left(\frac{J_{\phi}\cdot\nabla_{(r,\phi,\theta)}\tilde{S}^{(2)}}{\tilde{S}^{(2)}}\right){K}^{(2)}+\frac{\tilde{\Lambda}^{(2)}\left(J_{\phi}\cdot\nabla_{(r,\phi,\theta)}\tilde{\Lambda}^{(2)}\right)}{\sin^2\phi}L^{(2)},
\end{split}
\end{equation*}
where $J_\phi$, $K^{(k)}$ and $L^{(k)}$ ($k=1,2$) are defined by 
\begingroup
\begin{align*}\allowdisplaybreaks
&J_{\phi}:=\left(\frac{\partial {r}}{\partial \tilde{\phi}},\frac{\partial{\phi}}{\partial\tilde{\phi}},\frac{\partial{\theta}}{\partial\tilde{\phi}}\right),\\
&K^{(1)}:=\frac{1}{ r({\bf q}^{(1)}\cdot{\bf e}_r)\gamma}\left(B_0-\frac{1}{2}\lvert{\bf q}^{(1)}\rvert^2\right),\,\,
L^{(1)}:=\frac{1}{ r^3({\bf q}^{(1)}\cdot{\bf e}_r)},\\
&K^{(2)}:=\frac{1}{ \mathfrak{r}(\tilde{{\bf q}}^{(2)}\cdot{\bf e}_r)\gamma}\left(B_0-\frac{1}{2}\lvert\tilde{{\bf q}}^{(2)}\rvert^2\right),\,\,L^{(2)}:=\frac{1}{ \mathfrak{r}^3(\tilde{{\bf q}}^{(2)}\cdot{\bf e}_r)}
\end{align*}
\endgroup
for ${\bf q}^{(1)}:=\nabla\varphi^{(1)}+\nabla\times{\bf W}^{(1)}+\left(\frac{\Lambda^{(1)}}{r\sin\phi}\right){\bf e}_{\theta}$ and $\tilde{\bf q}^{(2)}:={\bf M}\cdot\nabla_{\bf x}\tilde{\varphi}^{(2)}+({\bf M}\cdot\nabla_{\bf x})\times\tilde{\bf W}^{(2)}+\left(\frac{\tilde{\Lambda}^{(2)}}{\mathfrak{r}\sin{\phi}}\right){\bf e}_{\theta}$ with 
$\mathfrak{r}:=\left[\frac{r_{\rm ex}-f^{(2)}(\phi)}{r_{\rm ex}-f^{(1)}(\phi)}\right](r-r_{\rm ex})+r_{\rm ex}(=\tilde{r})$.
Using the relations $\frac{\partial_{\phi}S}{S}=\partial_{\phi}(\ln S)$ and 
$\frac{\Lambda\partial_{\phi}\Lambda}{\sin^2\phi}=\frac{\Lambda}{\sin\phi}\left[\partial_{\phi}\left(\frac{\Lambda}{\sin\phi}\right)+\frac{\Lambda\cos\phi}{\sin^2\phi}\right]$,
we integrate by parts in the spherical coordinate space to 
deduce the estimate
\begin{equation}\label{Wtilde-est}
\int_{D_{\eta}({\bf x})}|\nabla\tilde{\bf W}|^2 d{\bf x}
\le C\eta^{1+2\alpha}\mathfrak{W}^2
\end{equation}
for
\begingroup
\begin{align*}\allowdisplaybreaks
\mathfrak{W}:=&\|S^{(1)}-\tilde{S}^{(2)}\|_{0,\alpha,{\Om_{f^{(1)}}^+}}^{(1-\alpha,\Gamma_{\rm w})}
+\sigma_v\mathfrak{W}_{\star},\\
\mathfrak{W}_{\star}:=&\|S^{(1)}-\tilde{S}^{(2)}\|_{0,\alpha,{\Om_{f^{(1)}}^+}}^{(1-\alpha,\Gamma_{\rm w})}+\left\|\frac{\Lambda^{(1)}}{\sin\phi}-\frac{\tilde{\Lambda}^{(2)}}{\sin\phi}\right\|_{0,\alpha,{\mathcal{R}_{f^{(1)}}^+}}^{(1-\alpha,\{\phi=\phi_0\})}\\
&+\|f^{(1)}-f^{(2)}\|_{1,\alpha,(0,\phi_0)}^{(-\alpha,\{\phi=\phi_0\})}
+\|\varphi^{(1)}-\tilde{\varphi}^{(2)}\|_{1,\alpha,{\Om_{f^{(1)}}^+}}^{(-\alpha,\Gamma_{\rm w})}+\|\tilde{\bf W}\|_{1,\alpha,{\Om_{f^{(1)}}^+}}^{(-\alpha,\Gamma_{\rm w})}.
\end{align*}
\endgroup
For a sufficiently small $\sigma_v$ depending only on the data, it holds that
\begin{equation}\label{Wtilde-1}
 \begin{split}
\|f^{(1)}-f^{(2)}\|_{1,\alpha,(0,\phi_0)}^{(-\alpha,\{\phi=\phi_0\})}+\|\varphi^{(1)}-\tilde{\varphi}^{(2)}\|_{1,\alpha,{\Om_{f^{(1)}}^+}}^{(-\alpha,\Gamma_{\rm w})}&\\
+\left\|\frac{\Lambda^{(1)}}{\sin\phi}-\frac{\tilde{\Lambda}^{(2)}}{\sin\phi}\right\|_{0,\alpha,{\mathcal{R}_{f^{(1)}}^+}}^{(1-\alpha,\{\phi=\phi_0\})}
&\le C\|\tilde{\bf W}\|_{1,\alpha,{\Om_{f^{(1)}}^+}}^{(-\alpha,\Gamma_{\rm w})}.
\end{split}
\end{equation}
Then, from this and \eqref{free-v-est}, we have
\begin{equation}\label{Wtilde-2}
\|S^{(1)}-\tilde{S}^{(2)}\|_{0,\alpha,{\Om_{f^{(1)}}^+}}^{(1-\alpha,\Gamma_{\rm w})}
\le C\sigma_v\|\tilde{\bf W}\|_{1,\alpha,{\Om_{f^{(1)}}^+}}^{(-\alpha,\Gamma_{\rm w})}.
\end{equation}
By the scaling argument with \eqref{Wtilde-est}-\eqref{Wtilde-2}, we have
\begin{equation}\label{W-cont}
\|\tilde{\bf W}\|_{1,\alpha,{\Om_{f^{(1)}}^+}}^{(-\alpha,\Gamma_{\rm w})}\le C^{\natural}\sigma_v \|\tilde{\bf W}\|_{1,\alpha,{\Om_{f^{(1)}}^+}}^{(-\alpha,\Gamma_{\rm w})}
\end{equation}
for a constant $C^{\natural}$ depending only on the data. 
Choose $\tilde{\sigma}_2^{\ast}$ satisfying
\begin{equation*}
0<\tilde{\sigma}_2^{\ast}\le\min\left\{\sigma_2^{\ast}, \frac{1}{2C^{\natural}}\right\}
\end{equation*}
for $\sigma_2^{\ast}$ in Proposition \ref{free-v-Pro} (a)
so that \eqref{W-cont} implies that $\tilde{\bf W}={\bf 0}$, i.e., ${\bf W}^{(1)}=\tilde{\bf W}^{(2)}$.
Then, by \eqref{Wtilde-1} and \eqref{Wtilde-2}, $(f^{(1)}, S^{(1)},\Lambda^{(1)},\varphi^{(1)},{\bf W}^{(1)})=(f^{(2)}, S^{(2)},\Lambda^{(2)},\varphi^{(2)},{\bf W}^{(2)})$.
This finishes the proof of Proposition \ref{free-v-Pro}.\qed

%

\subsection{Proof of Theorem \ref{Helmholtz-Theorem}}\label{proof-HThm}
We basically follow the idea of \cite{bae2011transonic} for potential flows.

\subsubsection{Existence: Proof of Theorem \ref{Helmholtz-Theorem} (a)}\label{S-exi-Thm}
For notational simplicity, set $\sigma:=\sigma_v$ and $\zeta_0:=(\rho_0^-,\varphi_0^-,p_0^-,v_{\rm c})$.
Define a mapping $\mathcal{P}:\mathcal{B}_{\sigma}(\zeta_0)\to\mathcal{B}_{C^{\star}\sigma}(p_{\rm c})$ by
\begin{equation}\label{def-mathcalP}
\left.\mathcal{P}:(\rho^-,\varphi^-,p^-,v_{\rm ex})\mapsto S\varrho^{\gamma}\left(S,\nabla\varphi+\nabla\times{\bf W}+\left(\frac{\Lambda}{r\sin\phi}\right){\bf e}_{\theta}\right)\right\vert_{\Gamma_{\rm ex}},
\end{equation}
where $(f, S, \Lambda, \varphi, {\bf W})$ is the solution to the free boundary problem  \eqref{free-v} associated with $(\rho^-,\varphi^-,p^-,v_{\rm ex})$.
By \eqref{free-v-est}, one can choose a suitable constant $C^{\star}$ so that the mapping $\mathcal{P}$ is well-defined.

To prove the local invertibility of $\mathcal{P}$, we need the following lemma.

\begin{lemma}\label{P-empty}
\begin{itemize}
\item[(i)] $\mathcal{P}$ is Fr\'echet differentiable at $\zeta_0$;
\item[(ii)] $\mathcal{P}:\mathcal{B}_{\sigma}(\zeta_0)\to\mathcal{B}_{C^{\star}\sigma}(p_{\rm c})$ is continuous in the sense that if $\zeta_j:=(\rho_j^-,\varphi_j^-,p_j^-,v_j)$ converges to $\zeta_{\infty}:=(\rho_\infty^-,\varphi_\infty^-,p_\infty^-,v_\infty)$ in $\mathfrak{B}^{(1)}:=C^{2,\frac{\alpha}{2}}(\Omega)\times C^{3,\frac{\alpha}{2}}(\Omega)\times C^{2,\frac{\alpha}{2}}(\Omega)\times C^{1,\frac{\alpha}{2}}_{(-\frac{\alpha}{2},\partial\Gamma_{\rm ex})}(\Gamma_{\rm ex})$ as $j$ tends to $\infty$, then $\mathcal{P}(\zeta_j)$ converges to $\mathcal{P}(\zeta_{\infty})$ in $\mathfrak{B}^{(2)}:=C^{1,\frac{\alpha}{2}}_{(-\frac{\alpha}{2},\partial\Gamma_{\rm ex})}(\Gamma_{\rm ex})$;
\item[(iii)] the partial Fr\'echet derivative  of $\mathcal{P}$ at $\zeta_0$ with respect to $v_{\rm ex}$ is invertible.
\end{itemize}
\end{lemma}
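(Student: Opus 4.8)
I would prove Lemma \ref{P-empty} by unraveling the definition \eqref{def-mathcalP} of $\mathcal{P}$ and reducing each of the three claims to properties of the solution map $(\rho^-,\varphi^-,p^-,v_{\rm ex})\mapsto (f,S,\Lambda,\varphi,{\bf W})$ furnished by Proposition \ref{free-v-Pro}, together with the explicit boundary conditions on $\Gamma_{\rm ex}$. The first observation is that on $\Gamma_{\rm ex}$ the exit pressure can be rewritten using the Bernoulli relation: by the definition \eqref{def-H} of $\varrho$ one has $S\varrho^\gamma(S,{\bf q}) = \varrho^\gamma(S,{\bf q})\cdot S$, and since $B_0 - \tfrac12|{\bf q}|^2 = \tfrac{\gamma\eta}{\gamma-1}\varrho^{\gamma-1}$ along solutions, the pressure on $\Gamma_{\rm ex}$ is a smooth explicit function of $S$ and $|{\bf q}|$ there. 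In particular, comparing with \eqref{free-v} (the exit condition $\mathfrak{a}(\nabla\varphi+{\bf t})\nabla\varphi\cdot{\bf e}_r = v_{\rm ex}$) and using that ${\bf q}\cdot{\bf e}_\phi = 0$ and ${\bf q}\cdot{\bf e}_\theta = \Lambda/(r\sin\phi)$ are controlled, I would write $\mathcal{P}(\rho^-,\varphi^-,p^-,v_{\rm ex})$ as $\mathcal{M}\big(S|_{\Gamma_{\rm ex}}, (\nabla\varphi+{\bf t})|_{\Gamma_{\rm ex}}\big)$ for a fixed smooth function $\mathcal{M}$. Since $(f,S,\Lambda,\varphi,{\bf W})$ all satisfy the estimate \eqref{free-v-est} and depend on the data, composition with the smooth $\mathcal{M}$ and restriction to $\Gamma_{\rm ex}$ (a trace operation, bounded from $C^{k,\alpha}_{(\cdot,\Gamma_{\rm w})}(\Om_f^+)$ to $C^{k,\alpha}_{(\cdot,\partial\Gamma_{\rm ex})}(\Gamma_{\rm ex})$) will yield the three statements once the corresponding statements are known for the solution map.

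For (i) and (ii), the key point is that the solution map inherits differentiability/continuity from the fixed-point construction. Following \cite[Lemma 3.11]{bae2011transonic} and the argument already carried out in Lemma \ref{Lemm-Impli} for the $(f,\varphi)$-subproblem, I would show: (a) the solution depends continuously on $(\rho^-,\varphi^-,p^-,v_{\rm ex})$ in the weaker norm $\mathfrak{B}^{(1)}$, using that the iteration maps $\mathcal{J}_v,\mathcal{J}_t,\mathcal{J}_p$ are continuous in the half-exponent H\"older spaces and that the fixed points are unique (Proposition \ref{free-v-Pro}(b)), so a compactness-plus-uniqueness argument upgrades continuity of the maps to continuity of the fixed points; and (b) at the background point $\zeta_0$ one can differentiate through the construction — the linearized problems \eqref{lin-chi-pro}, \eqref{ini-S}, \eqref{lin-W-re} have unique solutions with estimates, and a standard argument (as in the proof of Lemma \ref{Lemm-Impli}, the $\tfrac{\tilde\chi_\epsilon-\chi_0}{\epsilon}$ expansion) shows the difference quotients converge in the strong norm with a quadratic remainder. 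This gives Fr\'echet differentiability of $\mathcal{P}$ at $\zeta_0$ and continuity on $\mathcal{B}_\sigma(\zeta_0)$.

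For (iii) I would compute the partial Fr\'echet derivative $D_{v_{\rm ex}}\mathcal{P}(\zeta_0)$ explicitly. At the background solution $\Lambda_0^+ = 0$ and ${\bf W}_0 = {\bf 0}$, so perturbing only $v_{\rm ex}$ by $\tilde v$ forces (to first order) $\tilde\Lambda = 0$, $\tilde S = 0$ on $\mathfrak{S}$, and $\tilde{\bf W} = {\bf 0}$ (these are driven purely by $f$ and the upstream data, which are unperturbed), so the linearization reduces to a single elliptic problem for the potential perturbation $\dot\varphi$ with the inhomogeneity entering only through $\tilde v$ in the $\Gamma_{\rm ex}$ boundary condition — this is exactly the linearized problem \eqref{star-chi} with $\tilde{\bf q} \equiv ({\bf 0},\tilde v)$ and $\tilde f$ determined by $D_f\mathfrak{J}$. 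Tracking how $\dot\varphi|_{\Gamma_{\rm ex}}$ (hence $\nabla\dot\varphi\cdot{\bf e}_r|_{\Gamma_{\rm ex}}$ and $\dot S|_{\Gamma_{\rm ex}}$) depends on $\tilde v$, and using the monotonicity properties (viii) of the background solution together with $\partial_r(\varphi_0^- - \varphi_0^+)(r_{\rm sh}) > 0$, I expect $D_{v_{\rm ex}}\mathcal{P}(\zeta_0)$ to be, up to a nonvanishing multiplicative factor and a compact/lower-order perturbation, the identity on $C^{1,\alpha}_{(-\alpha,\partial\Gamma_{\rm ex})}(\Gamma_{\rm ex})$; invertibility then follows from a Fredholm/Neumann-series argument once the operator norm of the perturbation is controlled.

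\textbf{Main obstacle.} The hardest part will be part (iii): establishing that $D_{v_{\rm ex}}\mathcal{P}(\zeta_0)$ is actually invertible (not merely injective or surjective) on the weighted H\"older space $C^{1,\alpha}_{(-\alpha,\partial\Gamma_{\rm ex})}(\Gamma_{\rm ex})$. The difficulty is that the linearized exit data depends on $\tilde v$ both directly and through the coupled motion of the free boundary $f$ (via $\mathfrak{G}$) and the entropy on the shock; one must show these indirect contributions are small enough — in the weighted norm, near the corner $\partial\Gamma_{\rm ex}\cap\Gamma_{\rm w}$ — that the leading term dominates. This requires the sharp elliptic estimates in weighted spaces from \cite{han2011elliptic} used throughout the paper, applied carefully to the linearized problem \eqref{star-chi}, and a precise computation of the dependence of $D_f\mathfrak{J}$-inverse on $\tilde v$, analogous to but more delicate than \cite[Lemma 3.12]{bae2011transonic}.
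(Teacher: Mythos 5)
Your plan for parts (i) and (ii) follows essentially the same route as the paper's: the paper factors $\mathcal{P}=\mathcal{P}^{(1)}\mathcal{P}^{(2)}$ (with $\mathcal{P}^{(1)}$ carrying the $|\nabla\varphi+{\bf t}|^2$ dependence and $\mathcal{P}^{(2)}$ the entropy dependence, see \eqref{def-RQ}), computes their G\^{a}teaux derivatives via the difference-quotient expansion built from the linearized problems, and obtains Fr\'echet differentiability; part (ii) is treated by exactly the compactness-plus-uniqueness argument you sketch.

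Part (iii) contains a genuine gap. You assert that perturbing only $v_{\rm ex}$ by $\tilde v$ forces, to first order, $\tilde\Lambda=0$, $\tilde S=0$ on $\mathfrak{S}$, and $\tilde{\bf W}={\bf 0}$, because ``these are driven purely by $f$ and the upstream data, which are unperturbed.'' Only the claim for $\Lambda$ is correct (the background upstream flow is potential, so $\Lambda^-\equiv 0$). The free boundary $f$ is most definitely perturbed: it responds to $v_{\rm ex}$, and $f^{(w)}=\varphi^{(w)}(r_{\rm sh},\cdot)/\partial_r(\varphi_0^--\varphi_0^+)(r_{\rm sh})$ in \eqref{f-FD} is a nonzero, $\phi$-dependent function. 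Since the shock entropy $S_{\rm sh}(f')$ in \eqref{def-A} is evaluated at $r=f(\phi)$ along the radially varying background supersonic state $(\rho_0^-,\partial_r\varphi_0^-,p_0^-)$, its first variation is $S^{(w)}=A_\ast f^{(w)}$ with $A_\ast>0$ (see \eqref{S-FD}, \eqref{def-Astar}); and because $\partial_\phi S^{(w)}\neq 0$, the source term in \eqref{sys-W} is nonzero, so ${\bf W}^{(w)}\neq{\bf 0}$ as well. This nonvanishing entropy perturbation is not a minor correction: it is precisely what produces the compact operator $T$ in the decomposition $D_v\mathcal{P}=b_1(I-T)$ of \eqref{def-DP}, through $D_v\mathcal{P}^{(2)}w=a_2\varphi^{(w)}(r_{\rm sh},\cdot)$, and is what makes the Fredholm-alternative argument necessary. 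As written your plan would wrongly conclude that $D_v\mathcal{P}$ is a nonzero multiple of the identity; it is not, and its invertibility requires ruling out nontrivial solutions of $(I-T)w=0$. Your closing remark that you ``expect ... a compact/lower-order perturbation'' points in the right direction but directly contradicts your stated premise $\tilde S=0$ on $\mathfrak{S}$; resolving that contradiction is exactly where the real work of part (iii) lies.
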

Once Lemma \ref{P-empty} is verified, the weak implicit mapping theorem (cf. \cite[Proposition 5.1]{bae2011transonic}) implies that 
$\mathcal{P}^{-1}(p_{\rm ex})\ne\emptyset$ for any $p_{\rm ex}\in\mathcal{B}_{\tilde{\sigma}}(p_c)$ for a sufficiently small $\tilde{\sigma}>0$.
Moreover, according to Remark A.2 in the proof of \cite[Proposition 5.1]{bae2011transonic}, we have
\begin{equation}\label{vex-pex}
\|v_{\rm ex}-v_{\rm c}\|_{1,\alpha,\Gamma_{\rm ex}}^{(-\alpha,\partial\Gamma_{\rm ex})}\le C\sigma(\rho^-,{\bf u}^-,p^-,p_{\rm ex})
\end{equation}
as desired in \eqref{v-vc}.
Then, we finally choose $\sigma_2$ satisfying $0<\sigma_2\le\min\{\frac{\tilde{\sigma}}{C^{\star}},\tilde{\sigma}_2^{\ast}\}$ for $\tilde{\sigma}_2^{\ast}$ given in Proposition \ref{free-v-Pro} to complete the proof of Theorem \ref{Helmholtz-Theorem} (a).
The rest of this subsection is devoted to proving Lemma \ref{P-empty}.


\begin{proof}[Proof of Lemma \ref{P-empty} (i)]
We only compute the partial derivative of $\mathcal{P}$ with respect to $v_{\rm ex}$ at $\zeta_0$, because other partial derivatives of $\mathcal{P}$ can be obtained similarly. 

Note that 
\begin{equation*}
p=S\rho^{\gamma}=S\left(\frac{\gamma-1}{\gamma S}\right)^{\frac{\gamma}{\gamma-1}}\left(B_0-\frac{1}{2}|\nabla\varphi+{\bf t}|^2\right)^{\frac{\gamma}{\gamma-1}}.
\end{equation*}
For $\mathcal{P}$ given in \eqref{def-mathcalP}, define $\mathcal{P}^{(1)}$  and $\mathcal{P}^{(2)}$ by 
\begin{equation}\label{def-RQ}
\begin{split}
&\mathcal{P}^{(1)}:(\rho^-,\varphi^-,p^-,v_{\rm ex})\mapsto\left.\left(B_0-\frac{1}{2}\lvert\nabla\varphi+{\bf t}\rvert^2\right)^{\frac{\gamma}{\gamma-1}}\right\rvert_{\Gamma_{\rm ex}},\\
&\mathcal{P}^{(2)}:(\rho^-,\varphi^-,p^-,v_{\rm ex})\mapsto\frac{\mathcal{P}(\rho^-,\varphi^-,p^-,v_{\rm ex})}{\mathcal{P}^{(1)}(\rho^-,\varphi^-,p^-,v_{\rm ex})}=\left.S\left(\frac{\gamma-1}{\gamma S}\right)^{\frac{\gamma}{\gamma-1}}\right|_{\Gamma_{\rm ex}}.
\end{split}
\end{equation}
Then the proof of Lemma \ref{P-empty} (i) directly follows  from Lemma \ref{lemma-RQ} below.
\end{proof}

\begin{lemma}\label{lemma-RQ}{\rm (Analogy of \cite[Lemma 5.2]{bae2011transonic})} 
The mappings $\mathcal{P}^{(1)}$, $\mathcal{P}^{(2)}$, and $\mathcal{P}$ are Fr\'echet differentiable at $\zeta_0$, and 
the partial Fr\'echet derivatives of $\mathcal{P}^{(1)}$, $\mathcal{P}^{(2)}$, and $\mathcal{P}$ at $\zeta_0$ with respect to $v_{\rm ex}$ are given by 
\begin{eqnarray}
&\label{DR}&D_{v}\mathcal{P}^{(1)}:w\mapsto \left.-\frac{\gamma\left(B_0-\frac{1}{2}|\nabla\varphi_0^+|^2\right)^{\frac{1}{\gamma-1}}}{\gamma-1}\nabla\varphi_0^+\cdot\nabla\varphi^{(w)}\right|_{\Gamma_{\rm ex}},\\
&\label{DQ}&D_{v}\mathcal{P}^{(2)}:w\mapsto \left.-\frac{1}{\gamma-1}\left(\frac{\gamma-1}{\gamma }\right)^{\frac{\gamma}{\gamma-1}}(S_0^+)^{\frac{-\gamma}{\gamma-1}}{S}^{(w)}\right|_{\Gamma_{\rm ex}},\\
&\label{DP}&D_{v}\mathcal{P}:w\mapsto \mathcal{P}^{(2)}(\zeta_0)D_{v}\mathcal{P}^{(1)}(w)+\mathcal{P}^{(1)}(\zeta_0)D_{v}\mathcal{P}^{(2)}(w)
\end{eqnarray}
for any $w\in C^{1,\alpha}_{(-\alpha,\partial\Gamma_{\rm ex})}(\Gamma_{\rm ex})$.
In \eqref{DR}-\eqref{DQ}, $\varphi^{(w)}$ and $S^{(w)}$ are specified in the proof below.
\end{lemma}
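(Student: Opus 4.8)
<br>

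The plan is to prove Fréchet differentiability of $\mathcal{P}^{(1)}$, $\mathcal{P}^{(2)}$, and $\mathcal{P}$ at $\zeta_0$ with respect to $v_{\rm ex}$ by linearizing the free boundary problem \eqref{free-v} around the background solution. Since $\mathcal{P} = \mathcal{P}^{(1)}\cdot\mathcal{P}^{(2)}$ by \eqref{def-RQ}, once $\mathcal{P}^{(1)}$ and $\mathcal{P}^{(2)}$ are shown to be Fréchet differentiable with the stated derivatives, the formula \eqref{DP} for $D_v\mathcal{P}$ follows immediately from the product rule, so the core work is in \eqref{DR} and \eqref{DQ}. First I would fix a perturbation direction $w\in C^{1,\alpha}_{(-\alpha,\partial\Gamma_{\rm ex})}(\Gamma_{\rm ex})$ and, for small $\epsilon$, let $(f_\epsilon, S_\epsilon, \Lambda_\epsilon, \varphi_\epsilon, {\bf W}_\epsilon)$ denote the solution of \eqref{free-v} associated with $v_{\rm ex} = v_c + \epsilon w$ (all other data fixed at background values). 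By Proposition \ref{free-v-Pro} these solutions exist, are unique, and obey the estimate \eqref{free-v-est} with $\sigma(\rho_0^-,\nabla\varphi_0^-,p_0^-, v_c+\epsilon w) \le C|\epsilon|\|w\|$, so $(f_\epsilon, S_\epsilon, \Lambda_\epsilon, \varphi_\epsilon, {\bf W}_\epsilon) \to (r_{\rm sh}, S_0^+, 0, \varphi_0^+, {\bf 0})$ as $\epsilon\to0$ in the relevant weighted norms.

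Next, pulling everything back to the fixed domain $\Om_{r_{\rm sh}}^+$ via $\mathfrak{T}_{f_\epsilon, r_{\rm sh}}$ and dividing by $\epsilon$, I would show the difference quotients converge: $\frac{f_\epsilon - r_{\rm sh}}{\epsilon}\to f^{(w)}$, $\frac{\varphi_\epsilon - \varphi_0^+}{\epsilon}\to\varphi^{(w)}$, $\frac{S_\epsilon - S_0^+}{\epsilon}\to S^{(w)}$, $\frac{\Lambda_\epsilon}{\epsilon}\to\Lambda^{(w)}$, and $\frac{{\bf W}_\epsilon}{\epsilon}\to{\bf W}^{(w)}$ in slightly weaker (half-exponent) weighted Hölder norms, where the limits solve the \emph{linearized} free boundary problem obtained by differentiating \eqref{free-v} in $\epsilon$ at $\epsilon=0$. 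This linearized system decouples nicely: since $\Lambda^-\equiv0$ at background and $S_{\rm sh}(f')$ linearizes to a constant multiple of $f^{(w)}$ through $K_s$, one gets $\Lambda^{(w)}\equiv0$, a transport equation for $S^{(w)}$ sourced by the linearized shock condition, a linear elliptic problem of the type \eqref{lin-chi-pro} for $\varphi^{(w)}$ with the exit datum $w$ replacing $g_3$, and a linear problem for ${\bf W}^{(w)}$ sourced by the linearized $G$ and $\mathcal{A}$ terms; the shock location $f^{(w)}$ is pinned down by the Rankine–Hugoniot relation $\varphi = \varphi^-$ (whose linearization, using $\partial_r(\varphi_0^- - \varphi_0^+)(r_{\rm sh})>0$ as in \eqref{Df-J}, determines $f^{(w)}$ from $\varphi^{(w)}$). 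Existence, uniqueness and the requisite estimates for this linearized system follow from Lemmas \ref{lemma-varphi}, \ref{Lem-t}, \ref{Pro2} exactly as in the nonlinear iteration, and the convergence of difference quotients with a $O(\epsilon^2)$ remainder is obtained by subtracting the linearized system from the $\epsilon$-system, estimating the remainder via the same apriori estimates (this is the same mechanism used in the proof of Lemma \ref{Lemm-Impli}).

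With the convergence established, I would compute the derivatives on $\Gamma_{\rm ex}$ directly. For $\mathcal{P}^{(1)}$, differentiating $\left(B_0 - \tfrac{1}{2}|\nabla\varphi_\epsilon + {\bf t}_\epsilon|^2\right)^{\gamma/(\gamma-1)}$ in $\epsilon$ at $\epsilon = 0$, and using that ${\bf t}_\epsilon$ (built from ${\bf W}_\epsilon$ and $\Lambda_\epsilon$) is $o(1)$ with vanishing linearization on $\Gamma_{\rm ex}$ at background level compared to $\nabla\varphi_0^+$, and that $\nabla\varphi_0^+$ is radial, yields precisely \eqref{DR}. For $\mathcal{P}^{(2)} = S(\tfrac{\gamma-1}{\gamma S})^{\gamma/(\gamma-1)}\big|_{\Gamma_{\rm ex}} = (\tfrac{\gamma-1}{\gamma})^{\gamma/(\gamma-1)} S^{-1/(\gamma-1)}\big|_{\Gamma_{\rm ex}}$, differentiating in $S$ at $S = S_0^+$ gives $-\tfrac{1}{\gamma-1}(\tfrac{\gamma-1}{\gamma})^{\gamma/(\gamma-1)}(S_0^+)^{-\gamma/(\gamma-1)} S^{(w)}$, which is \eqref{DQ}. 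Linearity of $w\mapsto\varphi^{(w)}$ and $w\mapsto S^{(w)}$ (inherited from linearity of the linearized system in the data $w$) gives that these are bounded linear maps, completing Fréchet differentiability, and \eqref{DP} is the product rule. The main obstacle I anticipate is the technical bookkeeping in step two: verifying that the difference quotients genuinely converge in a weighted Hölder space — handling the domain variation through $\mathfrak{T}_{f_\epsilon, r_{\rm sh}}$, the coupling between $f_\epsilon$ and $(\varphi_\epsilon, S_\epsilon)$, and the corner-singularity weights — and controlling the remainder uniformly; this is essentially a replay of the Fréchet-differentiability argument in Lemma \ref{Lemm-Impli} but now for the full coupled system rather than just the potential $\varphi$, so careful attention to which norms close is needed.
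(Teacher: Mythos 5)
Your proposal follows essentially the same approach as the paper's proof: fix a direction $w$, use Proposition \ref{free-v-Pro} to generate the one-parameter family of solutions for $v_c+\epsilon w$ with all other data at background, observe that $\Lambda_\epsilon\equiv 0$ since the upstream supersonic data is unperturbed, pull back via $\mathfrak{T}_{f_\epsilon,r_{\rm sh}}$, pass difference quotients to a linearized system of the type \eqref{sys-var}--\eqref{sys-W} with $f^{(w)}$ and $S^{(w)}$ pinned by the linearized shock conditions (cf.\ \eqref{f-FD}, \eqref{S-FD}), derive an $O(\epsilon^2)$ remainder bound as in \eqref{FD-est} by the same mechanism as Lemma \ref{Lemm-Impli}, and finally compute $D_v\mathcal{P}^{(1)}$, $D_v\mathcal{P}^{(2)}$ by direct differentiation on $\Gamma_{\rm ex}$ and assemble $D_v\mathcal{P}$ by the product rule. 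One clarification worth making precise: when you justify \eqref{DR}, the linearization of ${\bf t}_\epsilon$ on $\Gamma_{\rm ex}$ does \emph{not} vanish — it equals $-\partial_r\psi^{(w)}\,{\bf e}_\phi$ since ${\bf W}_\epsilon=\psi_\epsilon{\bf e}_\theta$ vanishes on $\Gamma_{\rm ex}$ but its normal derivative does not, and $\Lambda_\epsilon\equiv 0$ — rather, its contribution to the inner product $\nabla\varphi_0^+\cdot\frac{d}{d\epsilon}(\nabla\varphi_\epsilon+{\bf t}_\epsilon)\big|_{\epsilon=0}$ vanishes because $\nabla\varphi_0^+$ is purely radial while the linearized ${\bf t}_\epsilon$ lies in the ${\bf e}_\phi$ direction; this orthogonality, which you do invoke, is the real reason the ${\bf t}_\epsilon$ term drops out, not smallness or a vanishing linearization.
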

\begin{proof}
The proof is divided into three steps. 

{\bf 1.} 
Fix a function $w\in C^{1,\alpha}_{(-\alpha,\partial\Gamma_{\rm ex})}(\Gamma_{\rm ex})$ with $\|w\|_{1,\alpha,\Gamma_{\rm ex}}^{(-\alpha,\partial\Gamma_{\rm ex})}=1$ and fix a sufficiently small positive constant $\epsilon_0\in(0,\tilde{\sigma}_2^{\ast}]$.
According to Proposition \ref{free-v-Pro}, for $\epsilon\in[-\epsilon_0,\epsilon_0]\backslash\{0\}$, there exists a unique solution $(f_{\epsilon},S_{\epsilon},\Lambda_{\epsilon},\varphi_{\epsilon},{\bf W}_{\epsilon})$  of \eqref{free-v} with replacing $({\bf u}^-,\rho^-,p^-,v_{\rm ex})$ by $(\nabla\varphi_0^-,\rho_0^-,p_0^-,v_{\rm c}+\epsilon w)$. 
And the solution satisfies the estimate \eqref{free-v-est}.
Note that, since $({\bf u}^-,\rho^-,p^-)=(\nabla\varphi_0^-,\rho_0^-,p_0^-)$, we have
 $\Lambda_{\epsilon}\equiv0$ and $(f_0,S_0,\Lambda_0,\varphi_0,{\bf W}_0)=(r_{\rm sh},S_0^+,0,\varphi_0^+,{\bf 0})$.
For ${\bf t}_{\epsilon}:=\nabla\times{\bf W}_{\epsilon}$, $(f_{\epsilon},S_{\epsilon},\varphi_{\epsilon},{\bf W}_{\epsilon})$ satisfy
\begingroup
\begin{align*}\allowdisplaybreaks
&\left\{\begin{aligned}
	&\mbox{div}\left({\bf A}(\nabla\varphi_{\epsilon},{\bf t}_{\epsilon})\right)=0\\
	&{\bf A}(\nabla\varphi_{\epsilon},{\bf t}_{\epsilon})\cdot\nabla S_{\epsilon}=0\\
	&-\Delta{\bf W}_{\epsilon}=G(r,\phi,S_{\epsilon},0,\partial_{\phi}S_{\epsilon},0,\nabla\varphi_{\epsilon}+{\bf t}_{\epsilon}){\bf e}_{\theta}
	\end{aligned}\right.\quad\mbox{in}\quad\Om_{f_{\epsilon}}^+,\\
&\left\{\begin{aligned}
		&\varphi_{\epsilon}=\varphi^-,\quad-\nabla\varphi_{\epsilon}\cdot{\bf n}_{f_{\epsilon}}=-\frac{K_s(f_{\epsilon}')}{{\bf u}^-\cdot{\bf n}_{f_{\epsilon}}}+{\bf t}_{\epsilon}\cdot{\bf n}_{f_{\epsilon}}\quad\mbox{on}\quad\mathfrak{S}_{f_{\epsilon}},\\
	&\nabla\varphi_{\epsilon}\cdot{\bf e}_{\phi}=0\quad\mbox{on}\quad\Gamma_{\rm w}\cap\partial\Om_{f_{\epsilon}}^+,\\
	&\left(B_0-\frac{\lvert\nabla\varphi_{\epsilon}+{\bf t}_{\epsilon}\rvert^2}{2}\right)^{\frac{1}{\gamma-1}}\nabla\varphi_{\epsilon}\cdot{\bf e}_r=v_{\rm c}+\epsilon w\quad\mbox{on}\quad\Gamma_{\rm ex},\\
	&S_{\epsilon}=S_{\rm sh}(f_{\epsilon}')\quad\mbox{on}\quad\mathfrak{S}_{f_{\epsilon}},\\
	&-\nabla{\bf W}_{\epsilon}\cdot{\bf n}_{f_{\epsilon}}+\mu(f_{\epsilon},f_{\epsilon}'){\bf W}_{\epsilon}=\mathcal{A}(f_{\epsilon},f_{\epsilon}'){\bf e}_{\theta}={\bf 0}\,\mbox{ on }\,\mathfrak{S}_{f_{\epsilon}},\\
	&{\bf W}_{\epsilon}={\bf 0}\,\mbox{ on }\,\partial\Om_{f_{\epsilon}}^+\backslash\mathfrak{S}_{f_{\epsilon}},
	\end{aligned}\right.
\end{align*}
\endgroup
where ${\bf A}$, $G$, $K_s(f')$, $v_c$, $S_{\rm sh}(f')$, $\mu(f,f')$, and $\mathcal{A}(f,f')$ are defined in \eqref{def-H-G}, \eqref{def-KS}, \eqref{v-c-def}, \eqref{def-A}, and \eqref{def-mu}.

Set $(\tilde{S}_{\epsilon},\tilde{\varphi}_{\epsilon},\tilde{\bf W}_{\epsilon}):=(S_{\epsilon},\varphi_{\epsilon},{\bf W}_{\epsilon})\circ\mathfrak{T}_{r_{\rm sh},f_{\epsilon}}$ and compute the G\^{a}teaux derivatives 
\begin{equation*}
\begin{split}
&f^{(w)}:=\lim_{\epsilon\to0}\frac{f_{\epsilon}-f_0}{\epsilon},\quad 
{S}^{(w)}:=\lim_{\epsilon\to0}\frac{\tilde{S}_{\epsilon}-S_0}{\epsilon},\\
&{\varphi}^{(w)}:=\lim_{\epsilon\to0}\frac{\tilde{\varphi}_{\epsilon}-\varphi_0}{\epsilon},\quad
{\bf W}^{(w)}:=\lim_{\epsilon\to0}\frac{\tilde{\bf W}_{\epsilon}-{\bf W}_0}{\epsilon}.
\end{split}
\end{equation*}
%
Since $(\varphi_{\epsilon}-\varphi^-)(f_{\epsilon}(\phi),\phi)\equiv 0$ and $f_0=r_{\rm sh}$, we have
\begin{equation*}
\begin{split}
\varphi^{(w)}(r_{\rm sh},\phi)
&=\lim_{\epsilon\to0}\frac{(\varphi_{\epsilon}-\varphi^-)+(\varphi_0^--\varphi_0^+)}{\epsilon}(f_{\epsilon}(\phi),\phi)\\
&=\partial_r(\varphi_0^--\varphi_0^+)(r_{\rm sh})f^{(w)},
\end{split}
\end{equation*}
from which we obtain that 
\begin{equation}\label{f-FD}
f^{(w)}=\frac{{\varphi}^{(w)}(r_{\rm sh},\phi)}{\partial_r(\varphi_0^--\varphi_0^+)(r_{\rm sh})}.
\end{equation}
%
%
Since $S_0=S_0^+$ and $S_{\epsilon}$ satisfies 
\begin{equation*}\label{S-epsilon-eq}
\left\{\begin{split}
&{\bf A}(\nabla\varphi_{\epsilon},\nabla\times{\bf W}_{\epsilon})\cdot\nabla S_{\epsilon}=0\quad\mbox{in}\quad\Om_{f_{\epsilon}}^+,\\
 &S_{\epsilon}=S_{\rm sh}(f_{\epsilon}')\quad\mbox{on}\quad \mathfrak{S}_{f_{\epsilon}},
 \end{split}\right.
\end{equation*}
 one can see that $\tilde{S}_{\epsilon}$ satisfies
\begin{equation*}
\left\{\begin{split}
&{\mathcal{A}}_{\tilde{r}}\partial_{\tilde{r}}\tilde{S}_{\epsilon}+{\mathcal{A}_{\phi}}\partial_{\phi}\tilde{S}_{\epsilon}=0\quad\mbox{in}\quad\mathcal{R},\\
 &\tilde{S}_{\epsilon}=S_{\rm sh}(f_{\epsilon}')\circ\mathfrak{T}_{r_{\rm sh},f_{\epsilon}}\quad\mbox{on}\quad \partial\mathcal{R}\cap\{\tilde{r}=r_{\rm sh}\},
 \end{split}\right.
\end{equation*}
where $\mathcal{A}_{\tilde{r}}$ and $\mathcal{A}_{\phi}$ are defined by 
\begin{equation*}
\begin{split}
&\mathcal{A}_{\tilde{r}}:=(r_{\rm ex}-f_{\epsilon}(\phi))a^2_{f_{\epsilon}}\sin\phi({\bf M}_{\epsilon}\cdot{\bf e}_r)b_{f_{\epsilon}}+(r_{\rm ex}-f_{\epsilon}(\phi))a_{f_{\epsilon}}\sin\phi({\bf M}_{\epsilon}\cdot{\bf e}_{\phi})c_{f_{\epsilon}},\\
&\mathcal{A}_{\phi}:=(r_{\rm ex}-f_{\epsilon}(\phi))a_{f_{\epsilon}}\sin\phi({\bf M}_{\epsilon}\cdot{\bf e}_{\phi})
\end{split}
\end{equation*}
for
\begin{equation*}
\begin{split}
&a_{f_{\epsilon}}:=\left(\frac{r_{\rm ex}-f_{\epsilon}(\phi)}{r_{\rm ex}-r_{\rm sh}}\right)(\tilde{r}-r_{\rm ex})+r_{\rm ex},\\
&{\bf M}_{\epsilon}:={\bf A}(\nabla\varphi_{\epsilon},\nabla\times{\bf W}_{\epsilon}+\left(\frac{\Lambda_{\ast}}{r\sin\phi}\right){\bf e}_{\theta})\circ\mathfrak{T}_{r_{\rm sh},f_{\epsilon}},\\
&b_{f_{\epsilon}}:=\frac{r_{\rm ex}-r_{\rm sh}}{r_{\rm ex}-f_{\epsilon}(\phi)},\quad
c_{f_{\epsilon}}:=\left(\frac{\tilde{r}-r_{\rm ex}}{r_{\rm ex}-f_{\epsilon}(\phi)}\right)f_{\epsilon}'(\phi).
\end{split}
\end{equation*}
Then, one can see that $\tilde{S}_{\epsilon}$ is defined by 
\begin{equation*}
\tilde{S}_{\epsilon}:=\left(S_{\rm sh}(f_{\epsilon}')\circ\mathfrak{T}_{r_{\rm sh},f_{\epsilon}}\right)\circ\mathcal{K}_{\epsilon}
\end{equation*}
for a function $\mathcal{K}_{\epsilon}:\mathcal{R}\to[0,\phi_0]$ defined by 
\begin{equation*}
\mathcal{K}_{\epsilon}:=\mathcal{G}_{\epsilon}^{-1}\circ w_{\epsilon}
\end{equation*}
with an invertible function $\mathcal{G}_{\epsilon}:[0,\phi_0]\to[w_{\epsilon}(r_{\rm sh},0),w_{\epsilon}(r_{\rm sh},\phi_0)]$ and a function $w_{\epsilon}:\mathcal{R}\to[w_{\epsilon}(r_{\rm sh},0),w_{\epsilon}(r_{\rm sh},\phi_0)]$ defined by 
\begin{equation*}
\begin{split}
&w_{\epsilon}(\tilde{r},\phi):=\int_0^{\phi}{\mathcal{A}}_{\tilde{r}}(f_{\epsilon},\nabla\varphi_{\epsilon},\nabla\times{\bf W}_{\epsilon})(\tilde{r},z)dz\mbox{ for }(\tilde{r},\phi)\in\overline{\mathcal{R}},\\
&\mathcal{G}_{\epsilon}(\phi):=w_{\epsilon}(r_{\rm sh},\phi)\mbox{ for }\phi\in[0,\phi_0].
\end{split}
\end{equation*}
It follows from \eqref{def-A} that
\begin{equation}\label{Sep-S0}
\begin{split}
{\tilde{S}_{\epsilon}-S_0}
&={\left(S_{\rm sh}(f_{\epsilon}')\circ\mathfrak{T}_{r_{\rm sh},f_{\epsilon}}\right)\circ\mathcal{K}_{\epsilon}
-\left(S_{\rm sh}(f_{0}')\circ\mathfrak{T}_{r_{\rm sh},f_{0}}\right)\circ\mathcal{K}_{0}}\\
&={A_1(f_{\epsilon}(\phi),\mathcal{K}_{\epsilon}(\tilde{r},\phi))-A_0(r_{\rm sh},\phi)},
\end{split}
\end{equation}
where we set $A_1:=A(\rho_0^-,(\partial_r\varphi_0^-)^2,p_0^-,f_{\epsilon}^2|f_{\epsilon}'|^2)$ and $A_0:=A(\rho_0^-,(\partial_r\varphi_0^-)^2,p_0^-,0)$ for a function $A$ defined by
\begin{equation*}
A(z_1,z_2,z_3,z_4):=[z_1z_2+z_3-z_1K(z_1,z_2,z_3,z_4)]\left[\frac{z_1z_2}{K(z_1,z_2,z_3,z_4)}\right]^{-\gamma}
\end{equation*}
with 
\begin{equation*}
K(z_1,z_2,z_3,z_4):=\frac{2(\gamma-1)}{\gamma+1}\left(\frac{1}{2}\cdot\frac{z_2}{1+z_4}+\frac{\gamma z_3}{(\gamma-1)z_1}\right).
\end{equation*}
%
Dividing \eqref{Sep-S0} by $\epsilon$ and taking $\epsilon\to0$
yield that
\begin{equation}\label{S-FD}
{S}^{(w)}=\lim_{\epsilon\to0}\frac{\tilde{S}_{\epsilon}-S_0}{\epsilon}
=A_{\ast}f^{(w)} 
\end{equation}
for a constant $A_{\ast}$ defined by 
\begin{equation}\label{def-Astar}
\begin{split}
A_{\ast}:=\partial_rA_0(r_{\rm sh},\phi)=&\,\partial_{z_1}A(\rho_0^-,(\partial_r\varphi_0^-)^2,p_0^-,0)(\partial_r\rho_0^-)\\
&+\partial_{z_2}A(\rho_0^-,(\partial_r\varphi_0^-)^2,p_0^-,0)2(\partial_r\varphi_0^-)(\partial_{rr}\varphi_0^-)\\
&+\partial_{z_3}A(\rho_0^-,(\partial_r\varphi_0^-)^2,p_0^-,0)(\partial_rp_0^-).
\end{split}
\end{equation}
By a direct computation and properties of  $(\rho_0^-,\partial_r\varphi_0^-,p_0^-,\partial_{rr}\varphi_0^-)$, one can check that $A_{\ast}>0$. The sign of $A_{\ast}$ is important for proving Lemma \ref{P-empty} (iii).

Similar to \eqref{tilde-W-eq}, we can get equations for $\tilde{\varphi}_{\epsilon}-\varphi_0$ and $\tilde{\bf W}_{\epsilon}-{\bf W}_0$. Dividing the equations by $\epsilon$ and formally taking $\epsilon\to0$ give the following system:
\begin{equation}\label{sys-var}
\left\{\begin{split}
\sum_{i,j=1}^3\partial_j(a_{ij}({\bf 0})\partial_i{\varphi}^{(w)})=\mbox{div}{\mathfrak{H}}&\mbox{ in }\Om_{r_{\rm sh}}^+,\\
(a_{ij}({\bf 0})\partial_i{\varphi}^{(w)})\cdot{\bf e}_r=\mathfrak{g}&\mbox{ on }\mathfrak{S}_{r_{\rm sh}},\\ 
(a_{ij}({\bf 0})\partial_i{\varphi}^{(w)})\cdot{\bf e}_{\phi}=0&\mbox{ on }\Gamma_{\rm w}\cap\partial\Om_{r_{\rm sh}}^+,\\
(a_{ij}({\bf 0})\partial_i{\varphi}^{(w)})\cdot{\bf e}_r=w&\mbox{ on }\Gamma_{\rm ex}
\end{split}\right.
\end{equation}
for $\mathfrak{H}=(\mathfrak{h}_1,\mathfrak{h}_2,\mathfrak{h}_3)$ and $\mathfrak{g}$ defined by 
\begin{equation}\label{tvex}
\begin{split}
&\mathfrak{h}_i:=-\left(B_0^+-\frac{|\nabla\varphi_0^+|^2}{2}\right)^{\frac{1}{\gamma-1}}\left(\nabla\times {\bf W}^{(w)}\right)\cdot{\bf e}_i,\quad i=1,2,3,\\
&\begin{split}\mathfrak{g}:=&\frac{(\gamma-1)B_0-\frac{\gamma+1}{2}\lvert\nabla\varphi_0^+\rvert^2}{(\gamma-1)(B_0-\frac{1}{2}\lvert\nabla\varphi_0^+\rvert^2)^{1-\frac{1}{\gamma-1}}}
		\left(\mu_0\varphi^{(w)}+(\nabla\times{\bf W}^{(w)})\cdot{\bf e}_r\right),
		\end{split}
\end{split}
\end{equation}
and 
\begin{equation}\label{sys-W}
\left\{\begin{split}
-\Delta{\bf W}^{(w)}=\frac{1}{r(\nabla\varphi_0^+\cdot{\bf e}_r)}\frac{\partial_{\phi}S^{(w)}}{\gamma S_0^+}\left(B_0^+-\frac{|\nabla\varphi_0^+|^2}{2}\right)&\mbox{ in }\Om_{r_{\rm sh}}^+,\\
-\nabla{\bf W}^{(w)}\cdot{\bf e}_r-\frac{1}{r^2}{\bf W}^{(w)}={\bf0}&\mbox{ on }\mathfrak{S}_{r_{\rm sh}},\\
{\bf W}^{(w)}={\bf0}&\mbox{ on }\partial\Om_{r_{\rm sh}}^+\backslash\mathfrak{S}_{r_{\rm sh}}.
\end{split}\right.
\end{equation}
As in Lemmas \ref{lemma-varphi} and \ref{Pro2},
we can obtain a unique solution $(\varphi^{(w)},{\bf W}^{(w)})$ to the system \eqref{sys-var}-\eqref{sys-W}.
Then, a direct computation for $\tilde{S}_{\epsilon}-S_0-\epsilon{S}^{(w)}$ with \eqref{Sep-S0}-\eqref{S-FD} and Lemmas \ref{lemma-varphi} and \ref{Pro2} yield that 
$(f^{(w)},S^{(w)},\varphi^{(w)},{\bf W}^{(w)})$ satisfy
\begin{equation}\label{FD-est}
\begin{split}
&\|f_{\epsilon}-f_0-\epsilon f^{(w)}\|_{2,\alpha,(0,\phi_0)}^{(-1-\alpha,\{\phi=\phi_0\})}\le C\epsilon^2,\\
&\|\tilde{S}_{\epsilon}-S_0-\epsilon{S}^{(w)}\|_{1,\alpha,\Om^+_{r_{\rm sh}}}^{(-\alpha,\Gamma_{\rm w})}\le C\epsilon^2,\\
&\|\tilde{\varphi}_{\epsilon}-\varphi_0-\epsilon{\varphi}^{(w)}\|_{2,\alpha,\Om^+_{r_{\rm sh}}}^{(-1-\alpha,\Gamma_{\rm w})}\le C\epsilon^2,\\
&\|\tilde{\bf W}_{\epsilon}-{\bf W}_0-\epsilon{\bf W}^{(w)}\|_{2,\alpha,\Om^+_{r_{\rm sh}}}^{(-1-\alpha,\Gamma_{\rm w})}\le C\epsilon^2.
\end{split}
\end{equation}


{\bf 2.} (Partial Fr\'echet derivative of $\mathcal{P}^{(1)}$)
By the definition of $\mathcal{P}^{(1)}$, we have 
\begin{equation}\label{diff-P1}
\mathcal{P}^{(1)}(\rho_0^-,\varphi_0^-,p_0^-,v_{\rm c}+\epsilon w)=\left.\left(B_0-\frac{1}{2}\lvert\nabla\varphi_{\epsilon}+{\bf t}_{\epsilon}\rvert^2\right)^{\frac{\gamma}{\gamma-1}}\right\rvert_{\Gamma_{\rm ex}}.
\end{equation}
Since ${\bf W}_{\epsilon}={\bf 0}$ and $\Lambda_{\epsilon}\equiv0$  on $\Gamma_{\rm ex}$,  it holds that  ${\bf t}_{\epsilon}=-\partial_r({\bf W}_{\epsilon}\cdot{\bf e}_{\theta}){\bf e}_{\phi}\mbox{ on }\Gamma_{\rm ex}$. 
Using this, we get
\begin{equation*}
\begin{split}
&\left.\frac{d}{d\epsilon}\right|_{\epsilon=0}\mathcal{P}^{(1)}(\rho_0^-,\varphi_0^-,p_0^-,v_{\rm c}+\epsilon w)
=\left.-\frac{\gamma}{\gamma-1}\left(B_0-\frac{1}{2}\lvert\nabla\varphi_{0}\rvert^2\right)^{\frac{1}{\gamma-1}}\nabla\varphi_0\cdot\nabla\varphi^{(w)}\right|_{\Gamma_{\rm ex}}.
\end{split}
\end{equation*}
Then, it follows from \eqref{FD-est} that 
\begin{equation*}
\|\mathcal{P}^{(1)}(\rho_0^-,\varphi_0^-,p_0^-,v_{\rm c}+\epsilon w)-\mathcal{P}^{(1)}(\rho_0^-,\varphi_0^-,p_0^-,v_{\rm c})-\epsilon D_{v}\mathcal{P}^{(1)}w\|_{1,\alpha,\Gamma_{\rm ex}}^{(-\alpha,\partial\Gamma_{\rm ex})}\le C\epsilon^2.
\end{equation*}

{\bf 3.} (Partial Fr\'echet derivative of  $\mathcal{P}^{(2)}$)
By the definition \eqref{def-RQ} of $\mathcal{P}^{(2)}$, we have
\begin{equation*}
\mathcal{P}^{(2)}(\rho_0^-,\varphi_0^-,p_0^-,v_{\rm c}+\epsilon w)=\left.\left(\frac{\gamma-1}{\gamma }\right)^{\frac{\gamma}{\gamma-1}}S_{\epsilon}^{\frac{-1}{\gamma-1}}\right\rvert_{\Gamma_{\rm ex}}.
\end{equation*}
Differentiating this with respect to $\epsilon$ at $\epsilon=0$ gives 
\begin{equation*}
\begin{split}
\left.\frac{d}{d\epsilon}\right|_{\epsilon=0}\mathcal{P}^{(2)}(\rho_0^-,\varphi_0^-,p_0^-,v_{\rm c}+\epsilon w)
&=\left.-\frac{1}{\gamma-1}\left(\frac{\gamma-1}{\gamma }\right)^{\frac{\gamma}{\gamma-1}}S_0^{\frac{-\gamma}{\gamma-1}}{S}^{(w)}\right\rvert_{\Gamma_{\rm ex}}.
\end{split}
\end{equation*}
By \eqref{f-FD} and \eqref{S-FD}, we have
\begin{equation*}
\begin{split}
\left.\frac{d}{d\epsilon}\right|_{\epsilon=0}\mathcal{P}^{(2)}(\rho_0^-,\varphi_0^-,p_0^-,v_{\rm c}+\epsilon w)
&=-\frac{1}{\gamma-1}\left(\frac{\gamma-1}{\gamma }\right)^{\frac{\gamma}{\gamma-1}}\frac{S_0^{\frac{-\gamma}{\gamma-1}}A_{\ast}\varphi^{(w)}(r_{\rm sh},\cdot)}{\partial_r(\varphi_0^--\varphi_0^+)(r_{\rm sh})}. 
\end{split}
\end{equation*}
Then, by the estimates in \eqref{FD-est}, we have
\begin{equation*}
\|\mathcal{P}^{(2)}(\rho_0^-,\varphi_0^-,p_0^-,v_{\rm c}+\epsilon w)-\mathcal{P}^{(2)}(\rho_0^-,\varphi_0^-,p_0^-,v_{\rm c})-\epsilon D_{v}\mathcal{P}^{(2)}w\|_{1,\alpha,\Gamma_{\rm ex}}^{(-\alpha,\partial\Gamma_{\rm ex})}\le C\epsilon^2.
\end{equation*}
%
%
This finishes the proof of Lemma \ref{lemma-RQ} (i).
\end{proof}
\begin{corollary} 
$D_v\mathcal{P}^{(2)}: C^{1,\alpha}_{(-\alpha,\partial\Gamma_{\rm ex})}(\Gamma_{\rm ex})\to C^{1,\alpha}_{(-\alpha,\partial\Gamma_{\rm ex})}(\Gamma_{\rm ex})$ is a compact mapping.
\end{corollary}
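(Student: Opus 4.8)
By Lemma \ref{lemma-RQ} and \eqref{DQ}, for $w\in C^{1,\alpha}_{(-\alpha,\partial\Gamma_{\rm ex})}(\Gamma_{\rm ex})$ we have
\begin{equation*}
D_v\mathcal{P}^{(2)}(w)=\left.-\frac{1}{\gamma-1}\left(\frac{\gamma-1}{\gamma}\right)^{\frac{\gamma}{\gamma-1}}(S_0^+)^{\frac{-\gamma}{\gamma-1}}S^{(w)}\right|_{\Gamma_{\rm ex}},
\end{equation*}
and by \eqref{f-FD}--\eqref{S-FD} one has $S^{(w)}=A_{\ast}f^{(w)}$ with $f^{(w)}(\phi)=\varphi^{(w)}(r_{\rm sh},\phi)/\partial_r(\varphi_0^--\varphi_0^+)(r_{\rm sh})$, where $(\varphi^{(w)},{\bf W}^{(w)})$ is the solution of the linear system \eqref{sys-var}--\eqref{sys-W} and $A_{\ast}\in\mathbb{R}$. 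Since $w$ enters \eqref{sys-var}--\eqref{sys-W} linearly --- only through the exit boundary condition --- $D_v\mathcal{P}^{(2)}$ is a bounded linear operator, and the plan is to exhibit it as the composition of a bounded operator with a compact one: (i) control $w\mapsto\varphi^{(w)}$ in the higher-regularity space $C^{2,\alpha}_{(-1-\alpha,\Gamw)}(\Om_{r_{\rm sh}}^+)$, (ii) take the trace on the shock, landing in $C^{2,\alpha}_{(-1-\alpha,\{\phi=\phi_0\})}((0,\phi_0))$, and (iii) invoke the compact embedding of this space into $C^{1,\alpha}_{(-\alpha,\{\phi=\phi_0\})}((0,\phi_0))\cong C^{1,\alpha}_{(-\alpha,\partial\Gamma_{\rm ex})}(\Gamma_{\rm ex})$ (the last identification being the one for axisymmetric profiles via the angular coordinate).

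For (i), I would use the construction carried out in Step~1 of the proof of Lemma \ref{lemma-RQ}, together with the a priori estimates of Lemmas \ref{lemma-varphi} and \ref{Pro2} applied to the linear coupled system \eqref{sys-var}--\eqref{sys-W}, to conclude that $w\mapsto(\varphi^{(w)},{\bf W}^{(w)})$ is a bounded linear map from $C^{1,\alpha}_{(-\alpha,\partial\Gamma_{\rm ex})}(\Gamma_{\rm ex})$ into $C^{2,\alpha}_{(-1-\alpha,\Gamw)}(\Om_{r_{\rm sh}}^+)\times C^{2,\alpha}_{(-1-\alpha,\Gamw)}(\Om_{r_{\rm sh}}^+)$. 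For (ii), the trace operator at $\{r=r_{\rm sh}\}$ is bounded from $C^{2,\alpha}_{(-1-\alpha,\Gamw)}(\Om_{r_{\rm sh}}^+)$ into $C^{2,\alpha}_{(-1-\alpha,\{\phi=\phi_0\})}((0,\phi_0))$, since $\delta_{\bf x}$ restricted to $\mathfrak{S}_{r_{\rm sh}}$ is comparable to the distance to the corner circle $\{r=r_{\rm sh},\phi=\phi_0\}$; composing, $w\mapsto f^{(w)}$ is bounded from $C^{1,\alpha}_{(-\alpha,\partial\Gamma_{\rm ex})}(\Gamma_{\rm ex})$ into $C^{2,\alpha}_{(-1-\alpha,\{\phi=\phi_0\})}((0,\phi_0))$, consistently with the first estimate of \eqref{FD-est}.

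For (iii), on the bounded interval $(0,\phi_0)$ with the single singular endpoint $\phi=\phi_0$ the weight exponents are matched so that the second-order part of the $C^{2,\alpha}_{(-1-\alpha,\{\phi=\phi_0\})}$ norm dominates the first-order part of the $C^{1,\alpha}_{(-\alpha,\{\phi=\phi_0\})}$ norm; a standard Arzel\`a--Ascoli argument on $(0,\phi_0-\varepsilon)$, combined with the uniform smallness of the weighted $C^{1,\alpha}_{(-\alpha)}$ seminorms on $(\phi_0-\varepsilon,\phi_0)$ forced by a uniform $C^{2,\alpha}_{(-1-\alpha)}$ bound, gives compactness of the embedding $C^{2,\alpha}_{(-1-\alpha,\{\phi=\phi_0\})}((0,\phi_0))\hookrightarrow C^{1,\alpha}_{(-\alpha,\{\phi=\phi_0\})}((0,\phi_0))$. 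Since $\partial_r(\varphi_0^--\varphi_0^+)(r_{\rm sh})\ne 0$ and $A_{\ast}$ is a nonzero constant, $D_v\mathcal{P}^{(2)}$ is then a fixed nonzero multiple of the composition (trace)$\circ\,(w\mapsto\varphi^{(w)})$ followed by this compact embedding, hence compact. The one step requiring real care is (i) --- namely the uniform $C^{2,\alpha}_{(-1-\alpha,\Gamw)}$ estimate for the coupled pair $(\varphi^{(w)},{\bf W}^{(w)})$ in terms of $\|w\|^{(-\alpha,\partial\Gamma_{\rm ex})}_{1,\alpha,\Gamma_{\rm ex}}$ --- but this is already contained in the proof of Lemma \ref{lemma-RQ}; the remaining steps are routine.
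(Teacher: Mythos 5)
Your proposal follows essentially the same route as the paper: reduce $D_v\mathcal{P}^{(2)}w$ via \eqref{DQ}, \eqref{S-FD}, \eqref{f-FD} to a constant multiple of the trace $\varphi^{(w)}(r_{\rm sh},\cdot)$, establish that $w\mapsto\varphi^{(w)}$ is bounded from $C^{1,\alpha}_{(-\alpha,\partial\Gamma_{\rm ex})}(\Gamma_{\rm ex})$ into a weighted $C^{2,\alpha}$ space, and then invoke the compact embedding $C^{2,\alpha}_{(-1-\alpha)}\hookrightarrow C^{1,\alpha}_{(-\alpha)}$. The only substantive difference is in how the key bound $\|\varphi^{(w)}\|_{2,\alpha,\Om_{r_{\rm sh}}^+}^{(-1-\alpha,\Gamw)}\le C\|w\|_{1,\alpha,\Gamma_{\rm ex}}^{(-\alpha,\partial\Gamma_{\rm ex})}$ is obtained: since $\varphi^{(w)}$ is coupled to ${\bf W}^{(w)}$ through $\mathfrak{H}$ and $\mathfrak{g}$ in \eqref{sys-var}, while ${\bf W}^{(w)}$ in \eqref{sys-W} is forced by $\partial_\phi S^{(w)}=A_*\partial_\phi f^{(w)}$, which is itself a trace of $\varphi^{(w)}$, a naive application of Lemmas \ref{lemma-varphi} and \ref{Pro2} gives a circular estimate rather than a closed one. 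The paper closes this loop by an explicit absorption argument using the smallness of $A_*$, whereas you only gesture at the coupled estimate as being "contained in the proof of Lemma \ref{lemma-RQ}". That attribution is a bit off (Lemma \ref{lemma-RQ} yields the bound only indirectly through \eqref{FD-est} combined with \eqref{free-v-est}), and your argument would be cleaner and self-contained if you wrote out the three-step chain $\|\varphi^{(w)}\|\lesssim\|{\bf W}^{(w)}\|+\|w\|\lesssim\|S^{(w)}\|+\|w\|\lesssim A_*\|\varphi^{(w)}\|+\|w\|$ and absorbed, as the paper does. Your additional discussion of the compact embedding on the weighted one-dimensional interval is a reasonable elaboration of a step the paper leaves implicit.
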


\begin{proof}
For any $w\in C^{1,\alpha}_{(-\alpha,\partial\Gamma_{\rm ex})}(\Gamma_{\rm ex})$, it follows from \eqref{DQ}, \eqref{S-FD}, and \eqref{f-FD} that 
\begin{equation*}
\|D_v\mathcal{P}^{(2)}w\|_{2,\alpha,\Gamma_{\rm ex}}^{(-1-\alpha,\partial\Gamma_{\rm ex})}\le CA_{\ast}\|\varphi^{(w)}(r_{\rm sh},\cdot)\|_{2,\alpha,\Gamma_{\rm ex}}^{(-1-\alpha,\partial\Gamma_{\rm ex})}.
\end{equation*}
Using \eqref{sys-var}, \eqref{sys-W}, and \eqref{S-FD}, we get a weighted $C^{2,\alpha}$ estimate of $\varphi^{(w)}$ as follows:
\begin{equation*}
\begin{split}
\|\varphi^{(w)}\|_{2,\alpha,\Omega_{\rm sh}^+}^{(-1-\alpha,\Gamma_{\rm w})}
\le&\, C\left(\|{\bf W}^{(w)}\|_{2,\alpha,\Omega_{\rm sh}^+}^{(-1-\alpha,\Gamma_{\rm w})}+\|w\|_{1,\alpha,\Gamma_{\rm ex}}^{(-\alpha,\partial\Gamma_{\rm ex})}\right)\\
\le&\, C\left(\|S^{(w)}\|_{1,\alpha,\Omega_{\rm sh}^+}^{(-\alpha,\Gamma_{\rm w})}+\|w\|_{1,\alpha,\Gamma_{\rm ex}}^{(-\alpha,\partial\Gamma_{\rm ex})}\right)\\
\le&\, C\left(A_{\ast}\|\varphi^{(w)}\|_{2,\alpha,\Omega_{\rm sh}^+}^{(-1-\alpha,\Gamma_{\rm w})}+\|w\|_{1,\alpha,\Gamma_{\rm ex}}^{(-\alpha,\partial\Gamma_{\rm ex})}\right).
\end{split}
\end{equation*}
Since $0<A_{\ast}\le C\epsilon$ by the definition \eqref{def-Astar} of $A_{\ast}$, if $\epsilon$ is sufficiently small depending only on the data, then 
\begin{equation}\label{varphi-w-est}
\|\varphi^{(w)}\|_{2,\alpha,\Omega_{\rm sh}^+}^{(-1-\alpha,\Gamma_{\rm w})}\le C\|w\|_{1,\alpha,\Gamma_{\rm ex}}^{(-\alpha,\partial\Gamma_{\rm ex})}.
\end{equation}
Therefore we have 
\begin{equation*}
\|D_v\mathcal{P}^{(2)}w\|_{2,\alpha,\Gamma_{\rm ex}}^{(-1-\alpha,\partial\Gamma_{\rm ex})}\le C\|w\|_{1,\alpha,\Gamma_{\rm ex}}^{(-\alpha,\partial\Gamma_{\rm ex})}.
\end{equation*}
\end{proof}

\begin{proof}[Proof of Lemma \ref{P-empty} (ii)] 
One can prove it by a standard method. For details, one may refer to \cite[Proof of Lemma 5.4]{bae2011transonic}. 
\end{proof}

\begin{proof}[Proof of Lemma \ref{P-empty} (iii)] 
We prove the invertibility of $D_{v}\mathcal{P}: C^{1,\alpha}_{(-\alpha,\Gamma_{\rm ex})}(\Gamma_{\rm ex})\to C^{1,\alpha}_{(-\alpha,\Gamma_{\rm ex})}(\Gamma_{\rm ex})$.
On $\Gamma_{\rm ex}$, by \eqref{sys-var}, $\varphi^{(w)}$ satisfies 
\begin{equation*}\label{varphi-vex-eq}
k\partial_r\varphi^{(w)}=w
\end{equation*}
for a constant $k$ defined by 
\begin{equation}\label{k12-def}
k=\left(B_0-\frac{|\nabla\varphi_0^+|^2}{2}\right)^{\frac{2-\gamma}{\gamma-1}}\frac{(\gamma+1)\left(K_0-|\nabla\varphi_0^+|^2\right)}{2(\gamma-1)}>0
\end{equation}
with $K_0$ given in \eqref{K0-def}.
From this and \eqref{DR}, we have 
\begin{equation}\label{DvR}
\begin{split}
D_v\mathcal{P}^{(1)}w
&=\left.-\frac{\gamma\left(B_0-\frac{1}{2}|\nabla\varphi_0^+|^2\right)^{\frac{1}{\gamma-1}}}{\gamma-1}\nabla\varphi_0^+\cdot\nabla\varphi^{(w)}\right|_{\Gamma_{\rm ex}}
=a_1w
\end{split}
\end{equation}
for a constant $a_1$ defined by 
\begin{equation}\label{a1-sign}
a_1:=\left.-\frac{\gamma\left(B_0-\frac{1}{2}|\nabla\varphi_0^+|^2\right)^{\frac{1}{\gamma-1}}}{\gamma-1}\cdot\frac{\partial_r\varphi_0^+}{k}\right|_{\Gamma_{\rm ex}}.
\end{equation}
Since  $k>0$ by the definition of $k$ in \eqref{k12-def}, $a_1$ is strictly negative, i.e., $a_1<0$.
And, it follows from \eqref{f-FD}, \eqref{S-FD}, and \eqref{def-Astar} that
\begin{equation}\label{DvQ}
\begin{split}
D_v\mathcal{P}^{(2)}w
&=a_2\varphi^{(w)}(r_{\rm sh},\cdot)
\end{split}
\end{equation}
for a constant $a_2$ defined by 
\begin{equation}\label{a2-sign}
a_2:=\left.-\frac{1}{\gamma-1}\left(\frac{\gamma-1}{\gamma }\right)^{\frac{\gamma}{\gamma-1}}(S_0^+)^{\frac{-\gamma}{\gamma-1}}\frac{A_{\ast}}{\partial_r(\varphi_0^--\varphi_0^+)(r_{\rm sh})}\right|_{\Gamma_{\rm ex}}.
\end{equation}
Since  $A_{\ast}>0$ by the definition  of $A_{\ast}$ in \eqref{def-Astar}, the sign of $a_2$ is negative, i.e., $a_2<0$.
Then, by \eqref{DvR} and \eqref{DvQ}, we can express $D_v\mathcal{P}$  as 
\begin{equation}\label{def-DP}
D_v\mathcal{P}w=b_1(I-T)w
\end{equation}
for $b_1:=\mathcal{P}^{(2)}(\zeta_0)a_1$ and a mapping $T: C^{1,\alpha}_{(-\alpha,\partial\Gamma_{\rm ex})}(\Gamma_{\rm ex})\to C^{1,\alpha}_{(-\alpha,\partial\Gamma_{\rm ex})}(\Gamma_{\rm ex})$ given by 
\begin{equation*}
T:w\mapsto -\frac{b_2}{b_1}\varphi^{(w)}(r_{\rm sh},\cdot)\quad\mbox{for}\quad b_2:=\mathcal{P}^{(1)}(\zeta_0)a_2.
\end{equation*}
Since $T$ is compact, the Fredholm alternative theorem yields that either $D_v\mathcal{P}w=0$ has a nontrivial solution $w$ or $D_v\mathcal{P}$ is invertible. 
 By using the eigenfunction expansion of $w$ and a contraction argument, one can prove that $D_{v}\mathcal{P}w=0$ if and only if $w=0$. 
The details of the proof are same as \cite[Step 2 in the Proof of Lemma 5.5]{bae2011transonic}. So we skip it here. 
The proof of Lemma \ref{P-empty} (iii) is completed.
\end{proof}


\subsubsection{Uniqueness: Proof of Theorem \ref{Helmholtz-Theorem} (b)}\label{S-uni-Thm}

Suppose that  axisymmetric functions $v_1$, $v_2\in C^{1,\alpha}_{(-\alpha,\partial\Gamma_{\rm ex})}(\Gamma_{\rm ex})$ satisfy 
\begin{equation}\label{uni-p}
\mathcal{P}(\zeta_{\ast},v_1)=p_{\rm ex}=\mathcal{P}(\zeta_{\ast},v_2)\mbox{ for }\zeta_{\ast}:=(\rho^-,\varphi^-,p^-).
\end{equation}
We claim that $v_1=v_2$.

\begin{lemma}\label{lemma-DvP}{\cite[Lemma 6.1]{bae2011transonic}}
For $\alpha\in(\frac{1}{2},1)$, $D_v\mathcal{P}$ is a bounded linear mapping from $C^{0,\alpha}_{(1-\alpha,\partial\Gamma_{\rm ex})}(\Gamma_{\rm ex})$ to itself and $D_v\mathcal{P}$ is invertible in $C^{0,\alpha}_{(1-\alpha,\partial\Gamma_{\rm ex})}(\Gamma_{\rm ex})$. 
\end{lemma}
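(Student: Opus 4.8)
The plan is to reduce the claim to the Fredholm alternative exactly as in the proof of Lemma~\ref{P-empty}(iii), but now carried out in the weaker weighted class $C^{0,\alpha}_{(1-\alpha,\partial\Gamma_{\rm ex})}(\Gamma_{\rm ex})$. Recall from \eqref{def-DP} that $D_v\mathcal{P}w=b_1(I-T)w$, where $b_1=\mathcal{P}^{(2)}(\zeta_0)a_1$ is a nonzero constant and $T:w\mapsto-\tfrac{b_2}{b_1}\varphi^{(w)}(r_{\rm sh},\cdot)$, with $(\varphi^{(w)},{\bf W}^{(w)})$ the solution of the linear system \eqref{sys-var}--\eqref{sys-W} whose Neumann datum on $\Gamma_{\rm ex}$ is $w$ and in which $S^{(w)}=A_\ast f^{(w)}$. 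For $w$ only in $C^{0,\alpha}_{(1-\alpha,\partial\Gamma_{\rm ex})}(\Gamma_{\rm ex})$ this system is still uniquely solvable (in the correspondingly weaker class), so the formula continues to define $D_v\mathcal{P}w$; since $b_1\neq0$, $D_v\mathcal{P}$ is bounded, resp.\ invertible, on $C^{0,\alpha}_{(1-\alpha,\partial\Gamma_{\rm ex})}(\Gamma_{\rm ex})$ if and only if $I-T$ is.

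First I would prove the weighted elliptic estimate
\[
\|\varphi^{(w)}\|_{1,\alpha,\Om_{r_{\rm sh}}^+}^{(-\alpha,\Gamma_{\rm w})}\le C\,\|w\|_{0,\alpha,\Gamma_{\rm ex}}^{(1-\alpha,\partial\Gamma_{\rm ex})}.
\]
This follows by running the Campanato-type estimate of Remark~\ref{weight-remark} for the $\varphi^{(w)}$-equation in \eqref{sys-var} --- whose right-hand side $\mathfrak{H}$ and shock datum $\mathfrak{g}$ are, by \eqref{tvex}, controlled by $\|{\bf W}^{(w)}\|_{1,\alpha,\Om_{r_{\rm sh}}^+}^{(-\alpha,\Gamma_{\rm w})}$ --- together with the $C^{2,\alpha}$ estimate of Lemma~\ref{Pro2} for ${\bf W}^{(w)}$ from \eqref{sys-W} (whose source is controlled by $\|\partial_\phi S^{(w)}\|_{0,\alpha,\Om_{r_{\rm sh}}^+}^{(1-\alpha,\Gamma_{\rm w})}$), and the identity $S^{(w)}=A_\ast f^{(w)}$ with $f^{(w)}$ depending linearly on $\varphi^{(w)}|_{\mathfrak{S}_{r_{\rm sh}}}$ through \eqref{f-FD}; the resulting coupled term is absorbed using the smallness of $A_\ast$ already exploited in the proof of the Corollary above. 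Restricting $\varphi^{(w)}$ to $\mathfrak{S}_{r_{\rm sh}}$ and identifying functions of $\phi$ on $\mathfrak{S}_{r_{\rm sh}}$ with functions on $\Gamma_{\rm ex}$ (both weights being the distance to $\{\phi=\phi_0\}$) gives $\|Tw\|_{1,\alpha,\Gamma_{\rm ex}}^{(-\alpha,\partial\Gamma_{\rm ex})}\le C\|w\|_{0,\alpha,\Gamma_{\rm ex}}^{(1-\alpha,\partial\Gamma_{\rm ex})}$, which in particular yields the asserted boundedness of $D_v\mathcal{P}$ on $C^{0,\alpha}_{(1-\alpha,\partial\Gamma_{\rm ex})}(\Gamma_{\rm ex})$.

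Next, since $C^{1,\alpha}_{(-\alpha,\partial\Gamma_{\rm ex})}(\Gamma_{\rm ex})$ embeds compactly into $C^{0,\alpha}_{(1-\alpha,\partial\Gamma_{\rm ex})}(\Gamma_{\rm ex})$ --- e.g.\ by passing through the intermediate class $C^{1,\alpha/2}_{(-\alpha/2,\partial\Gamma_{\rm ex})}(\Gamma_{\rm ex})$ and applying the Arzel\`a--Ascoli theorem, as is done for the iteration sets elsewhere in the paper --- the bound of the previous paragraph shows that $T$ is a compact operator on $C^{0,\alpha}_{(1-\alpha,\partial\Gamma_{\rm ex})}(\Gamma_{\rm ex})$. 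By the Fredholm alternative, $I-T$, and hence $D_v\mathcal{P}$, is invertible on this space precisely when the kernel equation $D_v\mathcal{P}w=0$ has only the trivial solution. But $D_v\mathcal{P}w=0$ is the very equation analyzed in Step~2 of the proof of Lemma~\ref{P-empty}(iii): expanding $w$ in the eigenfunctions of the associated Laplace--Beltrami problem on the spherical cap and running the contraction argument of \cite[Step 2 in the Proof of Lemma 5.5]{bae2011transonic} forces every Fourier coefficient of $w$ to vanish, so $w\equiv0$; this argument does not see the ambient function space. Hence $D_v\mathcal{P}$ is invertible on $C^{0,\alpha}_{(1-\alpha,\partial\Gamma_{\rm ex})}(\Gamma_{\rm ex})$.

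The main obstacle is the weighted elliptic estimate in the second paragraph: one must carry out the Morrey/Campanato argument with the Neumann datum only in the low-regularity weighted class $C^{0,\alpha}_{(1-\alpha,\partial\Gamma_{\rm ex})}$, while simultaneously controlling the coupling of $\varphi^{(w)}$ with ${\bf W}^{(w)}$ and $S^{(w)}$ and absorbing the coupled contributions via the smallness of $A_\ast$; once this is in place, the remaining steps are the soft Fredholm argument and the already-established triviality of the kernel.
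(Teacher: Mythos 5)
Your proof follows the same overall Fredholm strategy as the paper's, and your boundedness and compactness observations are a valid route: the weighted elliptic estimate $\|Tw\|_{1,\alpha,\Gamma_{\rm ex}}^{(-\alpha,\partial\Gamma_{\rm ex})}\le C\|w\|_{0,\alpha,\Gamma_{\rm ex}}^{(1-\alpha,\partial\Gamma_{\rm ex})}$ together with the compact embedding $C^{1,\alpha}_{(-\alpha,\partial\Gamma_{\rm ex})}(\Gamma_{\rm ex})\hookrightarrow C^{0,\alpha}_{(1-\alpha,\partial\Gamma_{\rm ex})}(\Gamma_{\rm ex})$ gives compactness of $T$ on the weaker class, exactly as needed.

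Where you deviate, and where there is a genuine gap, is the kernel-triviality step. Asserting that the eigenfunction-expansion/contraction argument of \cite[Step 2 of the proof of Lemma 5.5]{bae2011transonic} ``does not see the ambient function space'' is unjustified as stated: that argument is carried out with estimates that presume the kernel element already lies in $C^{1,\alpha}_{(-\alpha,\partial\Gamma_{\rm ex})}(\Gamma_{\rm ex})$, and whether a putative $w\in C^{0,\alpha}_{(1-\alpha,\partial\Gamma_{\rm ex})}(\Gamma_{\rm ex})$ has that extra regularity is precisely the issue. The paper instead closes the step by a regularity bootstrap: from $D_v\mathcal{P}w=0$ one gets $w=-\tfrac{a_2}{a_1}\varphi^{(w)}(r_{\rm sh},\cdot)$, elliptic regularity for the linearized system upgrades $\varphi^{(w)}(r_{\rm sh},\cdot)$ to $C^{1,\alpha}(\overline{\Gamma_{\rm ex}})\subset C^{1,\alpha}_{(-\alpha,\partial\Gamma_{\rm ex})}(\Gamma_{\rm ex})$, and then Lemma~\ref{P-empty}(iii) --- already proved --- gives $w=0$. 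Your own estimate in the second paragraph already supplies this bootstrap for free: since $T$ maps $C^{0,\alpha}_{(1-\alpha,\partial\Gamma_{\rm ex})}(\Gamma_{\rm ex})$ into $C^{1,\alpha}_{(-\alpha,\partial\Gamma_{\rm ex})}(\Gamma_{\rm ex})$, the identity $w=Tw$ forces $w\in C^{1,\alpha}_{(-\alpha,\partial\Gamma_{\rm ex})}(\Gamma_{\rm ex})$, so the clean way to finish is to invoke Lemma~\ref{P-empty}(iii) at that point rather than to re-run the eigenfunction argument in a space where it has not been verified.
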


Lemma \ref{lemma-DvP} and \eqref{uni-p} yield that
\begin{equation}\label{v12-DvP}
v_1-v_2=-D_v\mathcal{P}^{-1}(\mathcal{P}(\zeta_{\ast},v_1)-\mathcal{P}(\zeta_{\ast},v_2)-D_v\mathcal{P}(v_1-v_2)).
\end{equation}

\begin{lemma}\label{lemma-est-v12}
Fix $\alpha\in(\frac{1}{2},1)$ and  set $\mathfrak{e}_k:=\|v_k-v_c\|_{1,\alpha,\Gamma_{\rm ex}}^{(-\alpha,\partial\Gamma_{\rm ex})}$ for $k=1,2$. 
Then there exists a constant $\sigma_u\in(0,\sigma_2]$  depending only on the data so that if
\begin{equation*}
\sigma_{\natural}:=\|(\rho^-,{\bf u}^-,p^-)-(\rho_0^-,\nabla\varphi_0^-,p_0^-)\|_{2,\alpha,\Om}+\mathfrak{e}_1+\mathfrak{e}_2\le \sigma_u,
\end{equation*}
then 
\begin{equation}\label{P12-est}
\begin{split}
&\|\mathcal{P}(\zeta_{\ast},v_1)-\mathcal{P}(\zeta_{\ast},v_2)-D_v\mathcal{P}(v_1-v_2)\|_{0,\alpha,\Gamma_{\rm ex}}^{(1-\alpha,\partial\Gamma_{\rm ex})}
\le C\sigma_{\natural}\|v_1-v_2\|_{0,\alpha,\Gamma_{\rm ex}}^{(1-\alpha,\partial\Gamma_{\rm ex})}.
\end{split}
\end{equation}
\end{lemma}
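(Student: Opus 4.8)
\textbf{Proof plan for Lemma \ref{lemma-est-v12}.}
The plan is to exploit the structure $\mathcal{P}=\mathcal{P}^{(1)}\cdot\mathcal{P}^{(2)}$ established in \eqref{def-RQ}, treating the difference $\mathcal{P}(\zeta_{\ast},v_1)-\mathcal{P}(\zeta_{\ast},v_2)$ through the two factors separately and comparing each factor with the corresponding piece of $D_v\mathcal{P}$ from \eqref{DvR}, \eqref{DvQ}, and \eqref{def-DP}. First I would let $(f_k,S_k,\Lambda_k,\varphi_k,{\bf W}_k)$ ($k=1,2$) denote the solutions of the free boundary problem \eqref{free-v} associated with $(\rho^-,{\bf u}^-,p^-,v_k)$, which exist and satisfy \eqref{free-v-est} once $\sigma_{\natural}\le\sigma_u\le\tilde{\sigma}_2^{\ast}$; after pulling $(f_2,S_2,\Lambda_2,\varphi_2,{\bf W}_2)$ back to $\Om_{f_1}^+$ via $\mathfrak{T}_{f_1,f_2}$ as in \S\ref{Section-Pro}, I would run the same energy/scaling estimates used there (see \eqref{Wtilde-est}--\eqref{W-cont}) with $v_1-v_2$ now playing the role of a nonzero forcing term on $\Gamma_{\rm ex}$. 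This produces the basic difference estimate
\begin{equation*}
\|f_1-f_2\|_{1,\alpha,(0,\phi_0)}^{(-\alpha,\{\phi=\phi_0\})}+\|\varphi_1-\tilde{\varphi}_2\|_{1,\alpha,\Om_{f_1}^+}^{(-\alpha,\Gamma_{\rm w})}+\left\|\tfrac{\Lambda_1}{\sin\phi}-\tfrac{\tilde{\Lambda}_2}{\sin\phi}\right\|_{0,\alpha,\mathcal{R}_{f_1}^+}^{(1-\alpha,\{\phi=\phi_0\})}+\|{\bf W}_1-\tilde{\bf W}_2\|_{1,\alpha,\Om_{f_1}^+}^{(-\alpha,\Gamma_{\rm w})}\le C\|v_1-v_2\|_{0,\alpha,\Gamma_{\rm ex}}^{(1-\alpha,\partial\Gamma_{\rm ex})},
\end{equation*}
together with the companion bound $\|S_1-\tilde{S}_2\|_{0,\alpha,\Om_{f_1}^+}^{(1-\alpha,\Gamma_{\rm w})}\le C(\sigma_{\natural}+\|v_1-v_2\|)\|v_1-v_2\|$, the extra $\sigma_\natural$ factor coming because $S-S_0^+$ is driven only through $S_{\rm sh}(f')$ and the supersonic data.

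Next I would write $\mathcal{P}(\zeta_{\ast},v_1)-\mathcal{P}(\zeta_{\ast},v_2)$ via the product rule as $\mathcal{P}^{(2)}(\zeta_{\ast},v_1)[\mathcal{P}^{(1)}(\zeta_{\ast},v_1)-\mathcal{P}^{(1)}(\zeta_{\ast},v_2)]+\mathcal{P}^{(1)}(\zeta_{\ast},v_2)[\mathcal{P}^{(2)}(\zeta_{\ast},v_1)-\mathcal{P}^{(2)}(\zeta_{\ast},v_2)]$, and similarly decompose $D_v\mathcal{P}(v_1-v_2)$ according to \eqref{DP}. For the $\mathcal{P}^{(1)}$-part, since $\mathcal{P}^{(1)}$ depends on the trace of $\nabla\varphi+{\bf t}$ on $\Gamma_{\rm ex}$ and there ${\bf t}_k=-\partial_r({\bf W}_k\cdot{\bf e}_\theta){\bf e}_\phi$, a second-order Taylor expansion of $(B_0-\tfrac12|\cdot|^2)^{\gamma/(\gamma-1)}$ about $\nabla\varphi_0^+$ gives a linear term that matches $D_v\mathcal{P}^{(1)}$ (using \eqref{varphi-w-est} and the relation $k\partial_r\varphi^{(w)}=w$ on $\Gamma_{\rm ex}$) up to a quadratic remainder controlled by $\sigma_\natural\|v_1-v_2\|$. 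For the $\mathcal{P}^{(2)}$-part, which by \eqref{def-RQ} equals $(\tfrac{\gamma-1}{\gamma})^{\gamma/(\gamma-1)}S^{-1/(\gamma-1)}|_{\Gamma_{\rm ex}}$, I would use the transport representation $\tilde S_k=\tilde S_{{\rm sh}}^{(k)}\circ\mathcal{K}^{(k)}$ from the proof of Lemma \ref{Lem-t}, expand $S_{\rm sh}((f_k)')$ and the characteristic maps $\mathcal{K}^{(k)}$ in the difference $f_1-f_2$ (and hence, by \eqref{f-Lambda}-type bounds, in $\varphi^{(w)}(r_{\rm sh},\cdot)$), and subtract the linear term $D_v\mathcal{P}^{(2)}(v_1-v_2)=a_2\varphi^{(v_1-v_2)}(r_{\rm sh},\cdot)$ from \eqref{DvQ}; the residual is again quadratic, i.e., bounded by $C\sigma_\natural\|v_1-v_2\|$ because every second difference of $S_{\rm sh}$ and of $\mathcal{K}$ carries an extra factor of the smallness of the data or of the solution. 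Collecting the two parts and absorbing the cross terms $\|\mathcal{P}^{(i)}(\zeta_\ast,v_1)-\mathcal{P}^{(i)}(\zeta_\ast,v_2)\|\cdot\|$ lower-order $\|$ into the right-hand side yields \eqref{P12-est}.

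The main obstacle I anticipate is the careful bookkeeping of the $\mathcal{P}^{(2)}$-part: one must show that the difference of the two solutions, compared against the G\^ateaux linearization built in Lemma \ref{lemma-RQ}, really does gain a full extra power of $\sigma_\natural$ rather than just $\|v_1-v_2\|$. This requires (a) that the linearized map $w\mapsto\varphi^{(w)}$ is itself essentially independent of the $v_k$ at leading order — which follows from the fact that in \eqref{sys-var}-\eqref{sys-W} the coefficients are frozen at the background solution, with the correction terms $\mathfrak{r}_{ij}^{(k)}$-type perturbations of size $O(\sigma_\natural)$ as in the proof of Lemma \ref{Lemm-Impli} — and (b) that the quantities $A_\ast$ and $\mu_0$ entering $D_v\mathcal{P}^{(2)}$ agree with their analogues along the actual solutions up to $O(\sigma_\natural)$. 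Both are routine once one commits to the normalization $\mathfrak{T}_{f_1,f_2}$ and reuses the perturbation estimates already proved, but they are the place where the argument would be most error-prone. The remaining ingredients — invertibility of $D_v\mathcal{P}$ on $C^{0,\alpha}_{(1-\alpha,\partial\Gamma_{\rm ex})}(\Gamma_{\rm ex})$ (Lemma \ref{lemma-DvP}), and the contraction conclusion $v_1=v_2$ from \eqref{v12-DvP} once $C\sigma_\natural<1$ — are then immediate.
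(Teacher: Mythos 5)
Your plan is essentially the paper's argument: transform both solutions to a common domain via $\mathfrak{T}_{f_1,f_2}$, rerun the uniqueness-type estimates to get first-order bounds on the differences, then compare against the G\^{a}teaux linearization $(f^{(w)},S^{(w)},\varphi^{(w)},{\bf W}^{(w)})$ from Lemma~\ref{lemma-RQ} and show the residual is $O(\sigma_\natural\|v_1-v_2\|)$. The paper records this at~\eqref{c1-est}--\eqref{l-c1-est} and \eqref{c2-est} and then simply collects them; you organize the comparison through the product factorization $\mathcal{P}=\mathcal{P}^{(1)}\mathcal{P}^{(2)}$, which is a valid repackaging of the same ideas.

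One concrete error in your first-order bookkeeping: you claim the \emph{first-order} difference estimate for $S$ already gains a factor of $\sigma_\natural$, on the grounds that ``$S-S_0^+$ is driven only through $S_{\rm sh}(f')$ and the supersonic data.'' This reasoning conflates ``$S$ is close to a constant'' with ``the difference $S_1-\tilde S_2$ is small.'' Even at the background level, $\tilde S_{\rm sh}^{(k)}(\phi)=S_{\rm sh}((f_k)')(f_k(\phi),\phi)$ depends on $f_k$ through an $r$-evaluation of $\rho_0^-,p_0^-$ whose $r$-derivatives are $O(1)$, so $\tilde S_{\rm sh}^{(1)}-\tilde S_{\rm sh}^{(2)}$ picks up a term $\sim\partial_r A_0\cdot(f_1-f_2)\sim\|v_1-v_2\|$ with no extra smallness. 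That is precisely why the paper's \eqref{c1-est} keeps $\|dS\|\lesssim\|v_1-v_2\|$ without the extra factor, and the $\sigma_\natural$ gain only appears after subtracting $S^{(w)}=A_\ast f^{(w)}$ in \eqref{c2-est}. The variable that \emph{does} pick up the extra $\sigma_\natural$ already at first order is $\Lambda$ (recorded in \eqref{l-c1-est}), because $\Lambda^-\equiv 0$ for the background so the shock data for $\Lambda$ is itself $O(\sigma_\natural)$. Since the rest of your plan (compare against linearization, use $A_\ast$ and the frozen-coefficient structure of \eqref{sys-var}--\eqref{sys-W}) is the right machinery, the slip is local and fixable, but if carried forward naively through the $\mathcal{P}^{(2)}$-factor in the product rule it would lead you to believe the $\mathcal{P}^{(2)}$-contribution is automatically quadratic before linearization, which it is not.
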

\begin{proof}
 According to  the proof of Theorem \ref{Helmholtz-Theorem} (a), for each $k=1,2$, if $\sigma_{\natural}\le \sigma_2$, then there exists an axisymmetric solution $(f_k,S_k,\Lambda_k,\chi_k+\varphi_0^+,{\bf W}_k)$ to the free boundary problem \eqref{E-re} with \eqref{bd-W}-\eqref{bd-EX} and \eqref{boundary-HD} associated with $(\zeta_{\ast}, v_k)$. Moreover, each solution satisfies the estimate 
\begin{equation}\label{each-est}
\begin{split}
\|f_k-r_{\rm sh}\|_{2,\alpha,(0,\phi_0)}^{(-1-\alpha,\{\phi=\phi_0\})}
+\|(S_k,\frac{\Lambda_k}{r\sin\phi}{\bf e}_{\theta})-(S_0^+,{\bf 0})\|_{1,\alpha,\Om_{f_k}^+}^{(-\alpha,\Gamma_{\rm w})}&\\
+\|\varphi_k-\varphi_0^+\|_{2,\alpha,\Om_{f_k}^+}^{(-1-\alpha,\Gamma_{\rm w})}+\|\psi_k{\bf e}_{\theta}\|_{2,\alpha,\Om_{f_k}^+}^{(-1-\alpha,\Gamma_{\rm w})}&\le C\sigma_{\natural}.
\end{split}
\end{equation}

For notational simplicity, let us set
\begin{equation*}
\begin{split}
&(\tilde{S}_k,\tilde{\Lambda}_k,\tilde{\chi}_k,\tilde{\bf W}_k):=(S_k,\Lambda_k,\chi_k,{\bf W}_k)\circ\mathfrak{T}_{r_{\rm sh},f_k},\\
&dS:=\tilde{S}_1-\tilde{S}_2,\quad d{\bf \Lambda}:=\frac{\tilde{\Lambda}_1}{r\sin\phi}{\bf e}_{\theta}-\frac{\tilde{\Lambda}_2}{r\sin\phi}{\bf e}_{\theta},\\
&d\chi:=\tilde{\chi}_1-\tilde{\chi}_2,\quad d{\bf W}:=\tilde{\bf W}_1-\tilde{\bf W}_2.
\end{split}
\end{equation*}
%
%
As in the same way of the proof of Proposition \ref{free-v-Pro}, 
one can check that  
there exists a constant $\sigma_{u}\in(0,\sigma_2]$ depending only on the data so that if $\sigma_{\natural}\le\sigma_{u}$, then 
\begin{equation}\label{c1-est}
\begin{split}
&\|f_1-f_2\|_{1,\alpha,(0,\phi_0)}^{(-\alpha,\{\phi=\phi_0\})}+\|dS\|_{0,\alpha,\Omega_{r_{\rm sh}}^+}^{(1-\alpha,\Gamma_{\rm w})}
+\|d\chi\|_{1,\alpha,\Omega_{r_{\rm sh}}^+}^{(-\alpha,\Gamma_{\rm w})}+\|d{\bf W}\|_{1,\alpha,\Omega_{r_{\rm sh}}^+}^{(-\alpha,\Gamma_{\rm w})}\\
&\le C\|v_1-v_2\|_{0,\alpha,\Gamma_{\rm ex}}^{(1-\alpha,\partial\Gamma_{\rm ex})}
\end{split}
\end{equation}
and
\begin{equation}\label{l-c1-est}
\|d{\bf \Lambda}\|_{0,\alpha,\Omega_{r_{\rm sh}}^+}^{(1-\alpha,\Gamma_{\rm w})}\le C\sigma_{\natural}\|v_1-v_2\|_{0,\alpha,\Gamma_{\rm ex}}^{(1-\alpha,\partial\Gamma_{\rm ex})}.
\end{equation}


Let $(f^{(w)}, S^{(w)},\varphi^{(w)},{\bf W}^{(w)})$ be from Lemma \ref{lemma-RQ} with $w=v_1-v_2$.
Then, $(\varphi^{(w)},{\bf W}^{(w)})$ is the unique solution of \eqref{sys-var}-\eqref{sys-W} with $w=v_1-v_2$, and, by \eqref{f-FD} and \eqref{S-FD}, 
\begin{equation}\label{fw-def}
f^{(w)}=\frac{\varphi^{(w)}(r_{\rm sh},\phi)}{\partial_r(\varphi_0^--\varphi_0^+)(r_{\rm sh})}\quad\mbox{and}\quad S^{(w)}=A_{\ast}f^{(w)}.
\end{equation}
Moreover, by \eqref{varphi-w-est} and \eqref{fw-def}, $(f^{(w)}, S^{(w)},\varphi^{(w)},{\bf W}^{(w)})$ satisfies 
\begin{equation}\label{ww-est-v12}
\begin{split}
&\|f^{(w)}\|_{2,\alpha,(0,\phi_0)}^{(-1-\alpha,\{\phi=\phi_0\})}+\|S^{(w)}\|_{2,\alpha,\Omega_{\rm sh}^+}^{(-1-\alpha,\Gamma_{\rm w})}\\
&+\|\varphi^{(w)}\|_{2,\alpha,\Omega_{\rm sh}^+}^{(-1-\alpha,\Gamma_{\rm w})}+\|{\bf W}^{(w)}\|_{2,\alpha,\Omega_{\rm sh}^+}^{(-1-\alpha,\Gamma_{\rm w})}
\le C\|v_1-v_2\|_{1,\alpha,\Gamma_{\rm ex}}^{(-\alpha,\partial\Gamma_{\rm ex})}.
\end{split}
\end{equation}
Then, we again use the method of the proof of Proposition \ref{free-v-Pro} to get 
\begin{equation}\label{c2-est}
\begin{split}
&\|\tilde{\chi}_1-\tilde{\chi}_2-\varphi^{(w)}\|_{1,\alpha,\Omega_{r_{\rm sh}}^+}^{(-\alpha,\Gamma_{\rm w})}\le C\sigma_{\natural}\|v_1-v_2\|_{0,\alpha,\Gamma_{\rm ex}}^{(1-\alpha,\partial\Gamma_{\rm ex})},\\
&\|\tilde{\bf W}_1-\tilde{\bf W}_2-{\bf W}^{(w)}\|_{1,\alpha,\Omega_{r_{\rm sh}}^+}^{(-\alpha,\Gamma_{\rm w})}\le C\sigma_{\natural}\|v_1-v_2\|_{0,\alpha,\Gamma_{\rm ex}}^{(1-\alpha,\partial\Gamma_{\rm ex})},\\
&\|\tilde{S}_1-\tilde{S}_2-S^{(w)}\|_{0,\alpha,\Omega_{r_{\rm sh}}^+}^{(1-\alpha,\Gamma_{\rm w})}\le C\sigma_{\natural}\|v_1-v_2\|_{0,\alpha,\Gamma_{\rm ex}}^{(1-\alpha,\partial\Gamma_{\rm ex})},\\
&\|f_1-f_2-f^{(w)}\|_{1,\alpha,(0,\phi_0)}^{(-\alpha,\{\phi=\phi_0\})}\le C\sigma_{\natural}\|v_1-v_2\|_{0,\alpha,\Gamma_{\rm ex}}^{(1-\alpha,\partial\Gamma_{\rm ex})}.
\end{split}
\end{equation}
The estimate \eqref{P12-est} directly follows from \eqref{each-est}, \eqref{c1-est}-\eqref{l-c1-est}, and \eqref{ww-est-v12}-\eqref{c2-est}.
%
%
%
The proof of Lemma \ref{lemma-est-v12} is completed.
\end{proof}


By \eqref{vex-pex}, it holds that
$\sigma_{\natural}\le C\sigma(\rho^-,{\bf u}^-,p^-,p_{\rm ex}).$
Then, from \eqref{v12-DvP} and Lemma \ref{lemma-est-v12}, we have
\begin{equation}\label{v12-uni}
\|v_1-v_2\|_{0,\alpha,\Gamma_{\rm ex}}^{(1-\alpha,\partial\Gamma_{\rm ex})}\le C^{\dagger}\sigma(\rho^-,{\bf u}^-,p^-,p_{\rm ex}) \|v_1-v_2\|_{0,\alpha,\Gamma_{\rm ex}}^{(1-\alpha,\partial\Gamma_{\rm ex})}
\end{equation}
for a constant $C^{\dagger}$ depending only on the data.
We finally choose $\tilde{\sigma}_2$ satisfying
\begin{equation*}
0<\tilde{\sigma}_2\le \min\left\{\sigma_u, \frac{1}{2 C^{\dagger}}\right\}
\end{equation*}
for $\sigma_u\in(0,\sigma_2]$ given in Lemma \ref{lemma-est-v12}
so that \eqref{v12-uni} implies $v_1=v_2$.
The proof of Theorem \ref{Helmholtz-Theorem} (b) is completed. \qed

\vspace{.25in}
\noindent

{\bf Acknowledgments}\quad
The research of Hyangdong Park was supported in part by the POSCO Science Fellowship of POSCO TJ Park Foundation and a KIAS Individual Grant (MG086701, MG086702) at Korea Institute for Advanced Study.

\bigskip
{\bf Conflict of interests}\quad
There is no conflict of interest.

\bigskip

{\bf Data availability statement}\quad 
Data sharing not applicable to this article as no datasets were generated or analyzed during the current study.

\bigskip
\bibliographystyle{siam}
\bibliography{References}

\begin{thebibliography}{10}

\bibitem{Bae:2021aa}
{\sc M.~Bae, B.~Duan, J.~Xiao, and C.~Xie}, {\em Structural stability of
  supersonic solutions to the euler--poisson system}, Archive for Rational
  Mechanics and Analysis, 239 (2021), pp.~679--731.

\bibitem{bae2014subsonic}
{\sc M.~Bae, B.~Duan, and C.~Xie}, {\em Subsonic solutions for steady
  euler--poisson system in two-dimensional nozzles}, SIAM Journal on
  Mathematical Analysis, 46 (2014), pp.~3455--3480.

\bibitem{bae2011transonic}
{\sc M.~Bae and M.~Feldman}, {\em Transonic shocks in multidimensional
  divergent nozzles}, Archive for rational mechanics and analysis, 201 (2011),
  pp.~777--840.

\bibitem{bae2019contact}
{\sc M.~Bae and H.~Park}, {\em Contact discontinuities for 2-dimensional
  inviscid compressible flows in infinitely long nozzles}, SIAM Journal on
  Mathematical Analysis, 51 (2019), pp.~1730--1760.

\bibitem{bae2019contact3D}
\leavevmode\vrule height 2pt depth -1.6pt width 23pt, {\em Contact
  discontinuities for 3-d axisymmetric inviscid compressible flows in
  infinitely long cylinders}, Journal of Differential Equations, 267 (2019),
  pp.~2824--2873.

\bibitem{bae20183}
{\sc M.~Bae and S.~Weng}, {\em 3-d axisymmetric subsonic flows with nonzero
  swirl for the compressible euler--poisson system}, in Annales de l'Institut
  Henri Poincar{\'e} C, Analyse non lin{\'e}aire, vol.~35, Elsevier, 2018,
  pp.~161--186.

\bibitem{chen2003multidimensional}
{\sc G.-Q. Chen and M.~Feldman}, {\em Multidimensional transonic shocks and
  free boundary problems for nonlinear equations of mixed type}, Journal of the
  American Mathematical Society, 16 (2003), pp.~461--494.

\bibitem{chen2007existence}
\leavevmode\vrule height 2pt depth -1.6pt width 23pt, {\em Existence and
  stability of multidimensional transonic flows through an infinite nozzle of
  arbitrary cross-sections}, Archive for Rational Mechanics and Analysis, 184
  (2007), pp.~185--242.

\bibitem{chen2018mathematics}
{\sc G.-Q.~G. Chen and M.~Feldman}, {\em The Mathematics of Shock
  Reflection-diffraction and Von Neumann's Conjectures}, vol.~359, Princeton
  University Press, 2018.

\bibitem{Chen2022}
{\sc G.-Q.~G. Chen and M.~Feldman}, {\em Multidimensional transonic shock waves
  and free boundary problems}, Bulletin of Mathematical Sciences, 12 (2022),
  p.~2230002.

\bibitem{courant1999supersonic}
{\sc R.~Courant and K.~O. Friedrichs}, {\em Supersonic flow and shock waves},
  vol.~21, Springer Science \& Business Media, 1999.

\bibitem{FANG202162}
{\sc B.~Fang and X.~Gao}, {\em On admissible positions of transonic shocks for
  steady euler flows in a 3-d axisymmetric cylindrical nozzle}, Journal of
  Differential Equations, 288 (2021), pp.~62--117.

\bibitem{Fang2021}
{\sc B.~Fang and Z.~Xin}, {\em On admissible locations of transonic shock
  fronts for steady euler flows in an almost flat finite nozzle with prescribed
  receiver pressure}, Communications on Pure and Applied Mathematics, 74
  (2021), pp.~1493--1544.

\bibitem{han2011elliptic}
{\sc Q.~Han and F.~Lin}, {\em Elliptic partial differential equations}, vol.~1,
  American Mathematical Soc., 2011.

\bibitem{Huang:2021aa}
{\sc F.~Huang, J.~Kuang, D.~Wang, and W.~Xiang}, {\em Stability of transonic
  contact discontinuity for two-dimensional steady compressible euler flows in
  a finitely long nozzle}, Annals of PDE, 7 (2021), p.~23.

\bibitem{park2020transonic}
{\sc H.~Park and H.~Ryu}, {\em Transonic shocks for 3-d axisymmetric
  compressible inviscid flows in cylinders}, Journal of Differential Equations,
  269 (2020), pp.~7326--7355.

\bibitem{park20213}
{\sc Y.~Park}, {\em 3-d axisymmetric transonic shock solutions of the full
  euler system in divergent nozzles}, Archive for Rational Mechanics and
  Analysis, 240 (2021), pp.~467--563.

\bibitem{weng2021structural}
{\sc S.~Weng, C.~Xie, and Z.~Xin}, {\em Structural stability of the transonic
  shock problem in a divergent three-dimensional axisymmetric perturbed
  nozzle}, SIAM Journal on Mathematical Analysis, 53 (2021), pp.~279--308.

\bibitem{xin2009transonic}
{\sc Z.~Xin, W.~Yan, and H.~Yin}, {\em Transonic shock problem for the euler
  system in a nozzle}, Archive for rational mechanics and analysis, 194 (2009),
  pp.~1--47.

\bibitem{Yuan2008remark}
{\sc H.~Yuan}, {\em A remark on determination of transonic shocks in divergent
  nozzles for steady compressible euler flows}, Nonlinear Analysis: Real World
  Applications, 9 (2008), pp.~316--325.

\end{thebibliography}

\end{document}